\definecolor{shadecolor}{gray}{0.95}
\declaretheoremstyle[
headfont=\normalfont\bfseries,
notefont=\mdseries, notebraces={(}{)},
bodyfont=\normalfont,
postheadspace=0.5em,
spaceabove=1pt,
mdframed={
  skipabove=8pt,
  skipbelow=8pt,
  hidealllines=true,
  backgroundcolor={shadecolor},
  innerleftmargin=4pt,
  innerrightmargin=4pt}
]{shaded}
\newcommand{\R}{\mathbb{R}} 
\newcommand{\cO}{{\cal O}}
\newcommand{\eqdef}{:=} 
\DeclareMathOperator{\argmin}{argmin}        
\newtheorem{lemma}{Lemma}
\newtheorem{theorem}{Theorem}
\newtheorem{proposition}{Proposition}
\theoremstyle{definition}
\theoremstyle{remark}
\newtheorem{remark}{Remark} 
\newcommand{\compactify}{} 
\newcommand{ \kRNA}{DNA\xspace}
\title{Direct Nonlinear Acceleration}
\author{
Aritra Dutta${}^1$ \;
El Houcine Bergou${}^{1,2}$ \; Yunming Xiao${}^1$ \; Marco Canini${}^1$ \; Peter Richt\'{a}rik${}^{1,3}$ \\
${}^1$ KAUST, Saudi Arabia \; ${}^2$ INRA, Universit\'e Paris-Saclay, France \; ${}^3$ MIPT,  Russia}
\begin{document}

\maketitle

\begin{abstract}
Optimization acceleration techniques such as momentum play a key role in state-of-the-art machine learning algorithms. Recently, generic vector sequence extrapolation techniques, such as  regularized nonlinear acceleration (RNA) of Scieur et al.\ \citep{rna_16}, were proposed and shown to accelerate  fixed point iterations. In contrast to RNA which computes extrapolation coefficients by (approximately) setting the gradient of the objective function to zero  at the extrapolated point, we propose a more direct approach, which we call {\em direct nonlinear acceleration (DNA)}. In DNA, we aim to minimize (an approximation of) the function value at the extrapolated point instead. We adopt a regularized approach with regularizers  designed  to prevent the model from entering a region in which the functional approximation is less precise. ~While the computational cost of DNA is comparable to that of RNA, our direct approach significantly outperforms RNA on both synthetic and real-world datasets. While the focus of this paper is on convex problems, we obtain very encouraging results in accelerating the training of neural networks.
\end{abstract}
\section{Introduction}
 In this paper we consider the generic unconstrained minimization problem
 \begin{equation}\label{eq:mainP}
\compactify \min_{x\in \R^n} f(x),
 \end{equation}
where $f: \R^n \rightarrow \R$ is a smooth  objective function and bounded from below.   One of the most fundamental methods for solving  \eqref{eq:mainP} is {\em gradient descent (GD)}, on which many state-of-the-art methods are based. Given current iterate $x_k\in \R^n$,  the update rule of GD is
  \begin{eqnarray}\label{GD}
\compactify x_{k+1} = x_k - \alpha_k \nabla f(x_k),
 \end{eqnarray}
 where $\alpha_k>0$ is a stepsize. The efficiency of GD depends on further properties of $f$. Assuming $f$ is $L$--smooth and $\mu$--strongly convex, for instance, the iteration complexity of GD is $\cO(\kappa{\rm log}(1/\epsilon))$, where $\kappa=L/\mu$ and $\epsilon$ is the target error tolerance. However, it is known that GD is not the ``optimal'' gradient type method: it can be {\em accelerated}.  
 
 
The idea of accelerating converging optimization algorithms can track its history back to 1964 when Polyak proposed his ``heavy ball'' method \citep{polyak}. 
In 1983, Nesterov proposed his accelerated version for general convex optimization problems. Comparing with Polyak's method, Nesterov's method gives acceleration for 
general convex and smooth problems and the iteration complexity improves 
to  ${\cO(1/\sqrt{\epsilon})}$ \citep{nesterov}. In 2009, Beck and Teboulle proposed fast iterative shrinkage thresholding algorithm (FISTA) \citep{fista} that uses Nesterov's momentum coefficient and accelerates {\em proximal} type algorithms to solve a more complex class of objective functions that combine a smooth, convex loss function (not necessarily differentiable) and a strongly convex, smooth penalty function (also see \citep{nesterov2007, Nesterov2013}). To develop further insights into Nesterov's method, Su et al.\ \citep{ode_boyd} examined a continuous time 2nd-order ODE which at its limit reduces to Nesterov's accelerated gradient method. In addition, Lin et al. \citep{lin_katalyst} introduced a generic approach known as {\em catalyst} that minimizes a convex objective function via an accelerated proximal point algorithm and gains acceleration in Nesterov's sense. \citep{BubeckLS15} proposed a geometric alternative to gradient descent that is inspired by ellipsoid method and produces acceleration with complexity $\compactify{\cO(1/\sqrt{\epsilon})}$. Recently, \citep{Zhu2017LinearCA} used a linear {\em coupling} of gradient descent and mirror descent and claimed to attend acceleration in Nesterov's sense as well. 
In contrast, the {\em sequence acceleration} techniques accelerate a sequence independently from the iterative method that produces this sequence. 
In other words, these techniques take a sequence $\{x_k\}$ 
and produce an accelerated sequence based on the linear combination of $x_k{\rm s}$ such that the new accelerated sequence converges faster than the original. 
In the same spirit, recently, Scieur et al.~\citep{rna_16,rna_18} proposed an acceleration technique called regularized nonlinear acceleration~(RNA). 
Scieur et al.'s idea is based on Aitken's $\Delta^2$-algorithm \citep{aitken} and Wynn's $\epsilon$-algorithm \citep{wynn} (or recursive formulation of generalized Shanks transform \citep{ shanks, wynn, brezinski}). 
To achieve acceleration, 
Scieur et al. considered a technique known as minimum polynomial approximation and they assumed a {\em linear} model for the iterates near the optimum. They also proposed a {\em regularized} variant of their method to stabilize it numerically. 
The intuition behind the regularized nonlinear acceleration of Scieur et al. is very natural. To minimize $f$ as in \eqref{eq:mainP}, they considered the sequence of iterates $\{x_k\}_{k\geq 0}$ is generated by a fixed-point map. If $x^\star$ is a minimizer of $f$, $\nabla f(x^\star)=0$, and hence through extrapolation one can find:
\begin{eqnarray}\label{eq:rna0}
\compactify  c^\star \approx \arg\min_{c} 
\left\{ 
\left\|
\nabla f\left(\sum_{k = 0}^K c_k x_k \right) 
\right\| 
\;:\; c\in\R^{K+1}, \;\sum_{k = 0}^K c_k =1   
\right\},
 \end{eqnarray} 
such that the next (accelerated) point can be generated as a linear combination of $K+1$ previous iterates:
$
x = \sum_{i=0}^K  c_i^\star x_i.
$
 We review RNA in detail in Section \ref{sec:rna}. 

{\em Notation.} We denote the $\ell_2$-norm of a vector $x$ by $\| x \|$  and define $\| x \|_M$ by $\|x\|_M\eqdef \sqrt{x^\top M x}.$

\subsection{Contributions}
We highlight our main contributions in this paper as follows: 

\textbf{Direct nonlinear acceleration~(DNA).} Inspired by Anderson's acceleration technique \citep{Anderson} (see Appendix for a brief description of Anderson's acceleration) and the work of Scieur et al. \citep{rna_16}, we propose an extrapolation technique that accelerates a converging iterative algorithm. However, in contrast to \citep{rna_16}, we find the extrapolation coefficients $c^\star$ by directly minimizing the function at the linear combination of $K+1$ iterates $\{x_k\}_{k\geq 0}^K$ with respect to $c\in\R^{K+1}$. In particular, for a given sequence of iterates $\{x_k\}_{k\ge 0}^K$ we propose to approximately solve:
\begin{eqnarray}\label{mainDNA}
\boxed{\compactify \min_{c\in\R^{K+1}} f\left(\sum_{k = 0}^K c_k x_k \right)+\lambda  g(c),}
\end{eqnarray}
where  
$\lambda>0$ is a balancing parameter and 
$g$ is a penalty function. As our approach tries to minimize the functional value directly, we call it as \emph{direct nonlinear acceleration} (DNA). We also note that our formulation shares some similarities with \citep{riseth,zhang2018globally}. However, unlike \citep{riseth}, we do not require line search and check a decrease condition at each step of our algorithm. On the other hand, Zhang et al.\ \citep{zhang2018globally} do not consider a direct acceleration scheme as they deal with a fixed-point problem.

\textbf{Regularization.}
We propose several versions of DNA by varying the penalty function $g(c)$. This helps us to deal with the numerical instability in solving a linear system as well as to control  errors in gradient approximation. In our first version, we let $g(c) =  1_S(c)$, where $S\eqdef \{c\;:\; \sum_i c_i=1\}$ and $1_{S}(c)=0$ if $c\in S$, while $1_{S}(c)=+\infty$ otherwise.  Later, we propose two regularized {\it constraint-free} versions to find a better minimum of the function $f$ by expanding the search space of extrapolating coefficients to $\R^{K+1}$ rather than restricting them over the space $S$. To this end, the first {constraint-free version} 
adds a quadratic regularization $\compactify{ g(c) =  \left\|\sum_{i=0}^K c_ix_i-y\right\|^2}$ to the objective function, where  $y$ is a reference point and $g(c)$ 
controls how far we want the linear combination $\sum_ic_ix_i$ to deviate from $y$. In the second {constraint-free version}, we add the regularization directly on $c$. We add a quadratic term of the form  $g(c) =\compactify{ \|c-e\|^2}$ to the objective function, where $e$ is a reference point to $c$ and $g(c)$ controls how far we want $c$ to deviate from $e$. In contrast, the {\em regularized} version of RNA only considers a ridge regularization $\|c\|^2$ for numerical stability. Trivially, we note that by setting $e=0$, we recover the regularization proposed in RNA.  We argue that by using a different penalty function $g(c)$ as regularizer our DNA is more robust than RNA. 

\textbf{Quantification between RNA and DNA in minimizing quadratic functions by using GD iterates.}
%
 If $g(c)=0$ or $g(c)=1_{\sum_ic_i=1}$, in terms of the functional value, we always obtain a better accelerated point than 
RNA. Moreover, the acceleration obtained by DNA can be {\em theoretically} directly implied from the existing results of Scieur et al.\citep{rna_16}.
If $g(c)=0$, we show by a simple example on quadratic functions that DNA outperforms RNA by an arbitrary large margin.  
If $g(c)=1_{\sum_ic_i=1}$, we also quantify the functional values obtained from both RNA and DNA for quadratic functions and provide a bound on how DNA outperforms RNA in this setup.

\textbf{Numerical results.}  Our empirical results show that for smooth and strongly convex functions, minimizing the functional value converges faster than RNA. In practice, our acceleration techniques are robust and outperform that of Scieur et al.~\citep{rna_16} by large margins in almost all experiments on both synthetic and real datasets. To further push the robustness of our methods, we test them on nonconvex problems as well. As a proof of concept, we trained a simple neural network classifier on MNIST dataset \citep{mnist} via GD and accelerate the GD iterates via the online scheme in \citep{rna_16} for both RNA and DNA. Next, we train ResNet18 network \citep{resnet18} on CIFAR10 dataset \citep{cifar10} by SGD and accelerate the SGD iterates via the online scheme in \citep{rna_16} for both RNA and DNA. In both cases, DNA outperform RNA in lowering the generalization errors of the networks.


 \section{Regularized Nonlinear Acceleration}\label{sec:rna}
In RNA, one solves  \eqref{eq:rna0} by assuming that the gradient can be approximated by linearizing it in the neighborhood of $\{x_k\}_{k=0}^K.$
Thus, by assuming $\sum_{k = 0}^K c_k =1$, the relation
$
\compactify{ \left\|\nabla f\left(\sum_{k = 0}^K c_k x_k \right)\right\|\approx \left\| \sum_{k = 0}^K c_k\nabla f\left(x_k \right)\right\|}
$
 holds. Hence, one can approximately solve  \eqref{eq:rna0} via:
 \begin{eqnarray}\label{eq:rna} 
\compactify  c^\star=\arg\min_{c} \left\{ \left\| \sum_{k = 0}^K c_k \nabla f\left(x_k \right)\right\| =\left\| \sum_{k = 0}^K c_k \tilde{R}_k\right\|\;:\;  c\in\R^{K+1}, \; \sum_{k = 0}^K c_k =1 \right\},
 \end{eqnarray} 
 where $\tilde{R}_k$ is the $k^{\rm th}$ column of the matrix $\tilde{R}$, which holds $\nabla f\left(x_k \right).$~Moreover \eqref{eq:rna} does not need an explicit access to the gradient and it can be seen as an approximated minimal polynomial extrapolation (AMPE) as in \citep{ampe,rna_16,rna_18}. 
 In this context, we should note that Scieur et al. indicated that the summability condition $c^\top\mathbbm{1}=1$ is not restrictive, where $\mathbbm{1}$ is a vector of all 1s. 
If the sequence  $\{x_k\}$ is generated via GD (as in \eqref{GD}), then $\tilde{R} =\left[\nicefrac{(x_0 - x_{1})}{\alpha_0},\ldots,\nicefrac{(x_K - x_{K+1})}{\alpha_K}  \right]$. 
Also, if $\tilde{R}^\top \tilde{R}$ is nonsingular, then the minimizer of \eqref{eq:rna} 
is explicitly given as:
$\compactify{c^\star = \frac{(\tilde{R}^\top \tilde{R})^{-1}\mathbbm{1}}{\mathbbm{1}^\top(\tilde{R}^\top \tilde{R})^{-1}\mathbbm{1}}}$.  If $\tilde{R}^\top \tilde{R}$ is singular then $c$ is not necessarily unique. Any $c$ of the form $\frac{z}{z^\top\mathbbm{1}}$, where $z$ is a solution of $\tilde{R}^\top \tilde{R} z = \mathbbm{1}$, is a solution 
 of (\ref{eq:rna}). To deal with the numerical instabilities and the case when the matrix 
 $\tilde{R}^\top \tilde{R}$ is singular, Scieur et al. proposed to add a regularizer of the form
 $\lambda \|c\|^2$ to their problem, where $\lambda>0$. As a result, $c^\star$ is unique and given as 
$\compactify{c^\star = \frac{(\tilde{R}^\top \tilde{R}+\lambda I)^{-1}\mathbbm{1}}{\mathbbm{1}^\top(\tilde{R}^\top \tilde{R}+\lambda I)^{-1}\mathbbm{1}}}.$
   The numerical procedure of RNA is given in Alg~\ref{alg:RNA}.
 For further details about RNA we refer the readers to \citep{rna_16,rna_18}. Scieur et al. also explained several acceleration schemes to use with Algorithm~\ref{alg:RNA}. 
\begin{algorithm}
	\SetAlgoLined
 	\SetKwInOut{Input}{Input}
	\SetKwInOut{Output}{Output}
     \SetKwInOut{Init}{Initialize}
     \SetKwInOut{Compute}{Compute}
\Input{Sequence of iterates $x_0,\ldots,x_{K+1}$; sequence of step sizes $\alpha_0,\ldots,\alpha_{K}$; $\mathbbm{1}\in\R^{K+1}$: a vector of all 1s; and $\lambda >0$.}
		\nl Set $\tilde{R} =\left[\frac{x_0 - x_{1}}{\alpha_0},\ldots,\frac{x_K - x_{K+1}}{\alpha_K}  \right] $\;
		\nl Solve the linear system: $\left(\tilde{R}^\top \tilde{R} + \lambda I \right)z = \mathbbm{1}$\;
		\nl Set $c = \frac{z}{z^\top\mathbbm{1}}\in\mathbb{R}^{K+1}$\;
       \Output{$x = \sum_{k = 0}^K c_k x_k$.}
 	\caption{RNA} \label{alg:RNA}
 \end{algorithm}


\section{ Direct Nonlinear Acceleration}\label{sec:dna}
Instead of minimizing the norm of the gradient, we propose to minimize the objective function $f$ directly to obtain the coefficients $\{c_k\}$. 
We set $g(c)=0$ in \eqref{mainDNA} and we propose to  solve the unconstrained minimization problem 
\begin{eqnarray}\label{eq:fkrna}
\compactify \boxed{ \min_{c\in\R^{K+1}}  f\left(Xc \right),}
 \end{eqnarray}
 where $X = [x_0, \ldots,x_K]$. We call problem \eqref{eq:fkrna} as direct nonlinear acceleration (DNA) without any constraint.
 If $f$ is quadratic, then we have the following lemma:
 \begin{lemma}\label{lemma:dna1}
Let the objective function $f$ be quadratic and let $\{x_k\}$ be the iterates produced by \eqref{GD} to minimize $f$. 
 Then 
  $c$ is a solution of the linear system $X^\top R z = - X^\top \nabla f(0),$
  where  $R \in\R^{n\times (K+1)}$ is a matrix such that its $i^{\rm th}$ column is  
$R_i = \frac{x_i-x_{i+1}}{\alpha_i}-\nabla f(0)$ and $X = [x_0, \ldots,x_K]$.
 \end{lemma}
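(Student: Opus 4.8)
The plan is to write out the first-order optimality condition for the unconstrained problem \eqref{eq:fkrna} and then rewrite it using the quadratic structure of $f$ together with the GD recursion. First I would fix a quadratic model $f(x) = \tfrac{1}{2} x^\top A x + b^\top x + d$ with $A=A^\top$, so that $\nabla f(x) = Ax + b$ and in particular $\nabla f(0) = b$. Writing $h(c) \eqdef f(Xc)$ with $X = [x_0,\ldots,x_K]$, the chain rule gives $\nabla h(c) = X^\top \nabla f(Xc) = X^\top(A X c + b)$, so any minimizer $c$ of \eqref{eq:fkrna} satisfies $\nabla h(c) = 0$, i.e.\ $X^\top A X c = -X^\top b = -X^\top \nabla f(0)$.

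Next I would identify the matrix $R$ in the statement. From the GD update \eqref{GD} we have $\nabla f(x_i) = (x_i - x_{i+1})/\alpha_i$, hence the $i$-th column of $R$ is $R_i = \nabla f(x_i) - \nabla f(0) = (A x_i + b) - b = A x_i$; that is, $R = A X$. Substituting this into the optimality condition from the first step, it becomes $X^\top R c = -X^\top \nabla f(0)$, which is exactly the claimed linear system with $z = c$. This is the whole argument: essentially a one-line computation once the pieces are lined up.

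The only points requiring care are bookkeeping conventions — tracking the affine shift $b$ in the quadratic model and observing that the subtraction of $\nabla f(0)$ in the definition of $R_i$ is precisely what cancels the linear term, leaving the purely linear map $x \mapsto Ax$, so that $X^\top R$ is the (symmetric, positive-semidefinite) Hessian of $h$ expressed through the iterates. There is no substantive obstacle; if one additionally wants the solution $c$ to be a \emph{global} minimizer rather than merely a stationary point, one invokes convexity of $f$ (i.e.\ $A \succeq 0$), which holds in the setting of interest, and otherwise the lemma is read as characterizing the critical points of $c \mapsto f(Xc)$.
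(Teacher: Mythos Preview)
Your proposal is correct and follows essentially the same route as the paper's proof: both set $h(c)=f(Xc)$, write the first-order optimality condition $X^\top\nabla f(Xc)=0$, use the affine form of the gradient for quadratic $f$ together with the GD relation $\nabla f(x_i)=(x_i-x_{i+1})/\alpha_i$ to rewrite $\nabla f(Xc)=Rc+\nabla f(0)$, and conclude. Your version is slightly more explicit in spelling out $f(x)=\tfrac12 x^\top Ax+b^\top x+d$ and identifying $R=AX$ directly, but the argument is the same.
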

 If $f$ is non-quadratic then we can {\em approximately} solve problem (\ref{eq:fkrna})
 by approximating its gradient by a linear model.
 In fact, we use the following approximation $\nabla f(x)\approx A(x-y_x)+\nabla f(y_x),$ where we assume that $x$ is close to $y_x$ and $A$ is an approximation of the Hessian. Therefore, by setting $x=Xc$ and $y_x=y$ in the above, we have
$
\nabla f(Xc) \approx A(Xc-y)+\nabla f(y)=\sum_i c_iAx_i - A y +\nabla f(y),
$
where $y$ is a referent point that is assumed to be in the neighborhood of $Xc$. For instance, one may choose $y$ to be $x_K$. Let $x_{i-1}$ be a referent point for $x_i$, that is, assume that $\nabla f(x_i) \approx A(x_{i}-x_{i-1})+\nabla f(x_{i-1})$. Then one can show that 
 $Ax_i = \nabla f(x_i)-\nabla f(0)$. As a result, we have
\begin{eqnarray}\label{approximate_grad}
 \compactify \nabla f(Xc)& \approx &\compactify \sum_i c_i (\nabla f(x_i)-\nabla f(0))- A y +\nabla f(y)\nonumber\\
&=&\compactify \sum_i c_i \left({\tfrac{x_i-x_{i+1}}{\alpha_i}}-\nabla f(0)\right)- A y +\nabla f(y)\nonumber\\
&=&\compactify \sum_i c_iR_i- A y +\nabla f(y) \quad = \quad Rc - A y +\nabla f(y) \quad  \approx \quad  Rc  +\nabla f(0).
\end{eqnarray}
Therefore, from the first optimality condition and by using 
(\ref{approximate_grad}), we conclude that the solutions of (\ref{eq:fkrna})
can be approximated by the solutions of the linear system $X^\top R z = - X^\top \nabla f(0)$.
 We describe the numerical procedure in Alg~1 in the Appendix.
 \begin{figure}
\centering
\begin{minipage}{0.3\textwidth}		
\centering
		\includegraphics[width=\textwidth]{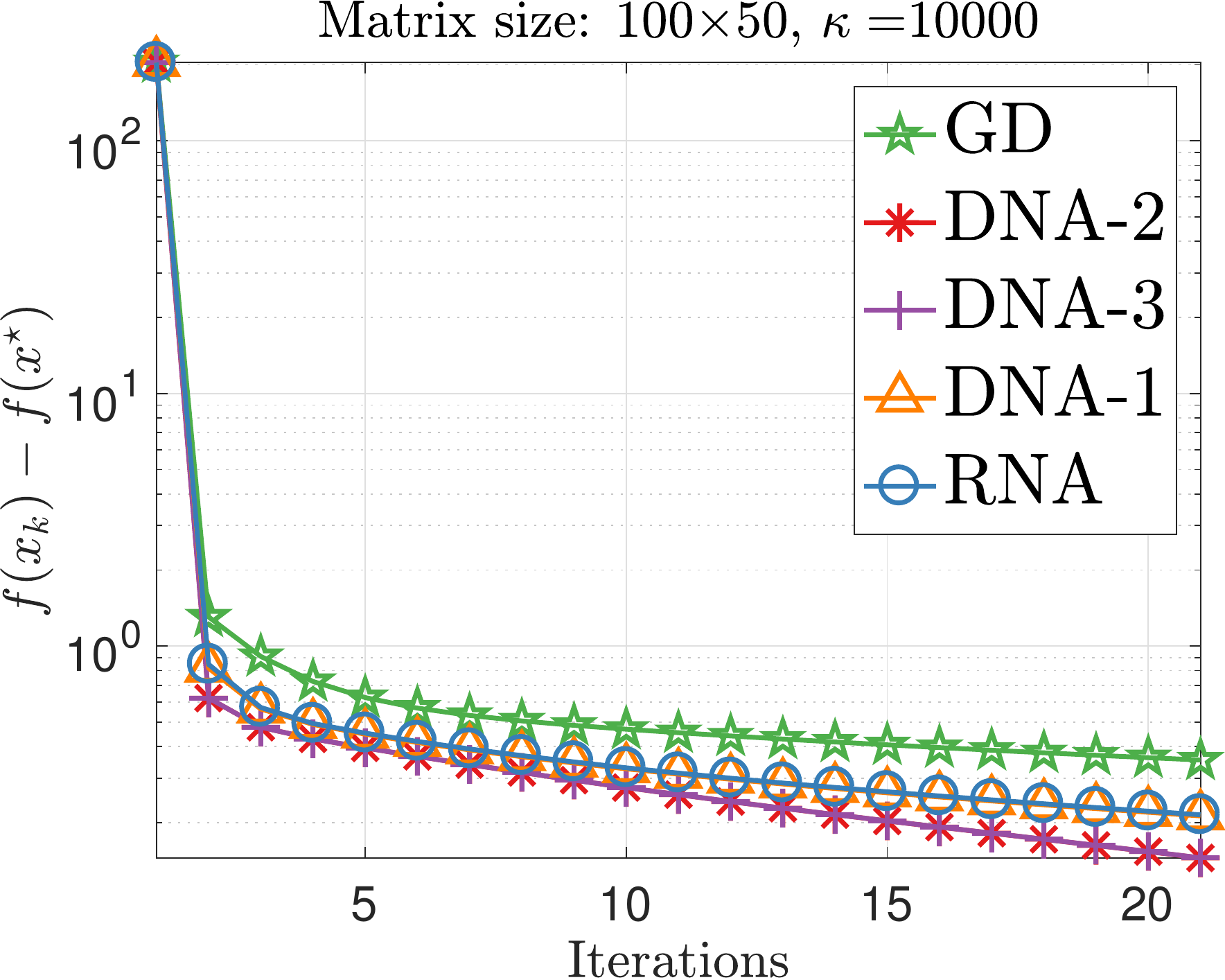}
	\end{minipage}
\begin{minipage}{0.3\textwidth}
		\centering
		\includegraphics[width=\textwidth]{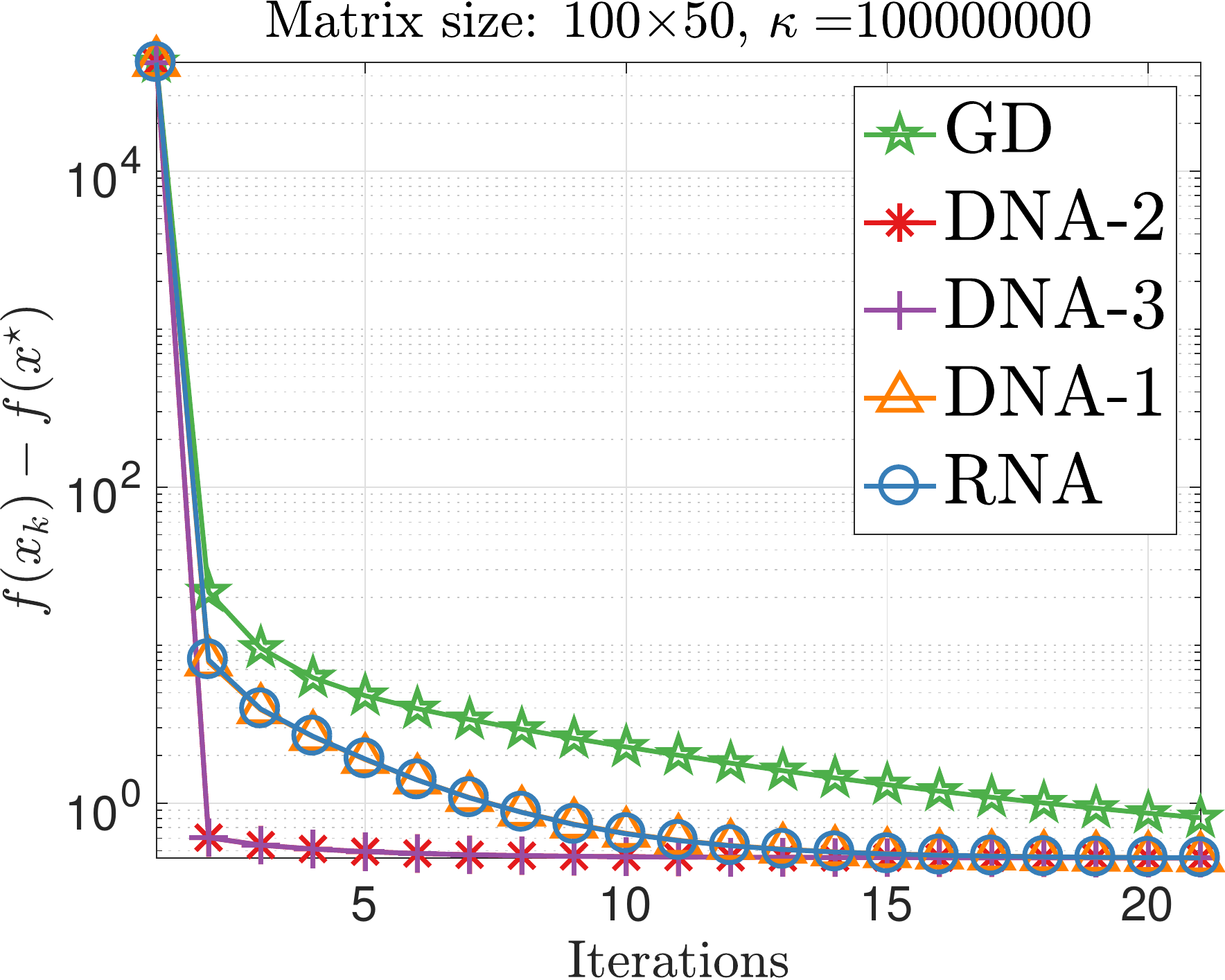}
	\end{minipage}
\begin{minipage}{0.3\textwidth}
		\centering
		\includegraphics[width=\textwidth]{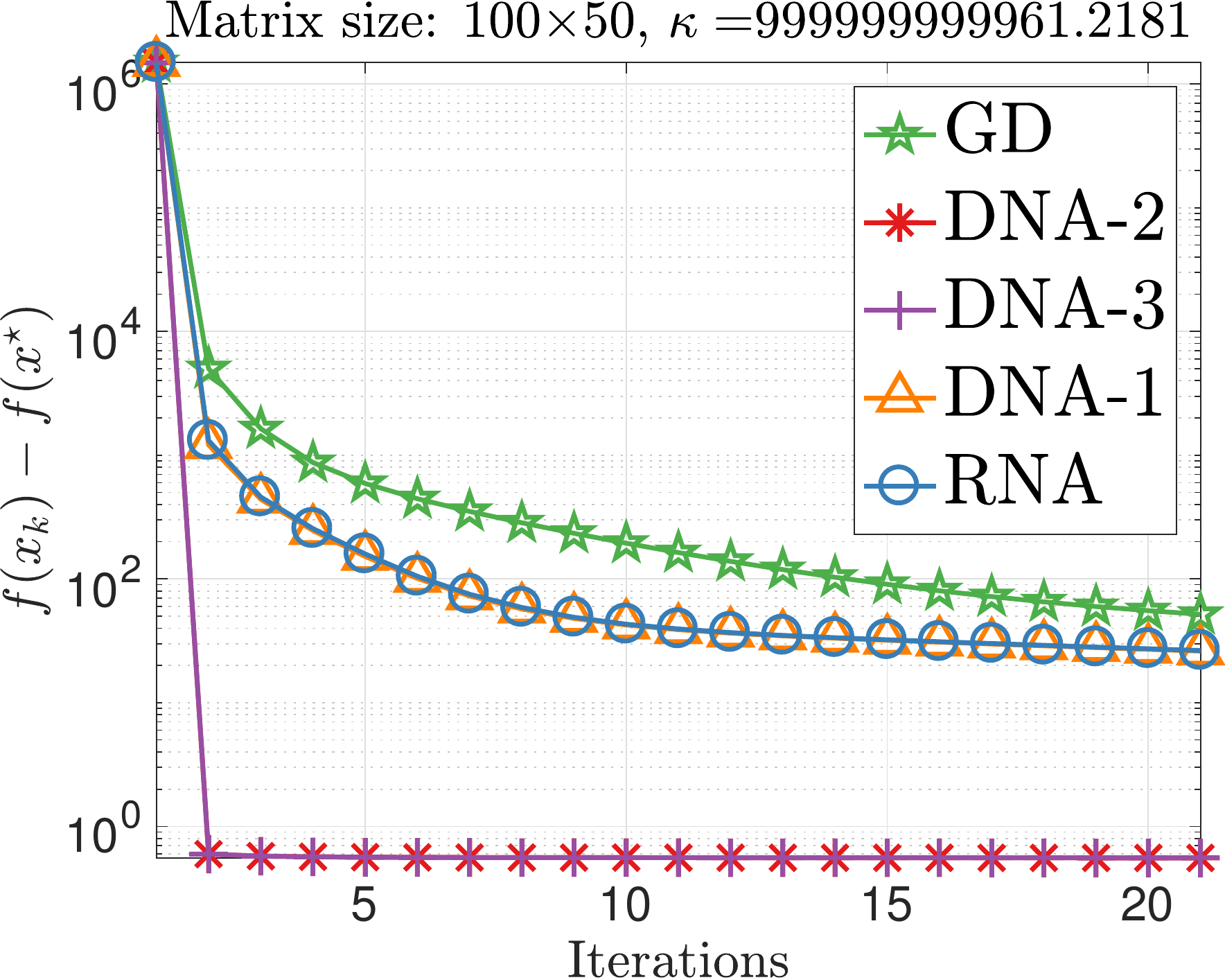}
	\end{minipage}
     \\
	\begin{minipage}{0.3\textwidth}
		\centering
		\includegraphics[width=\textwidth]{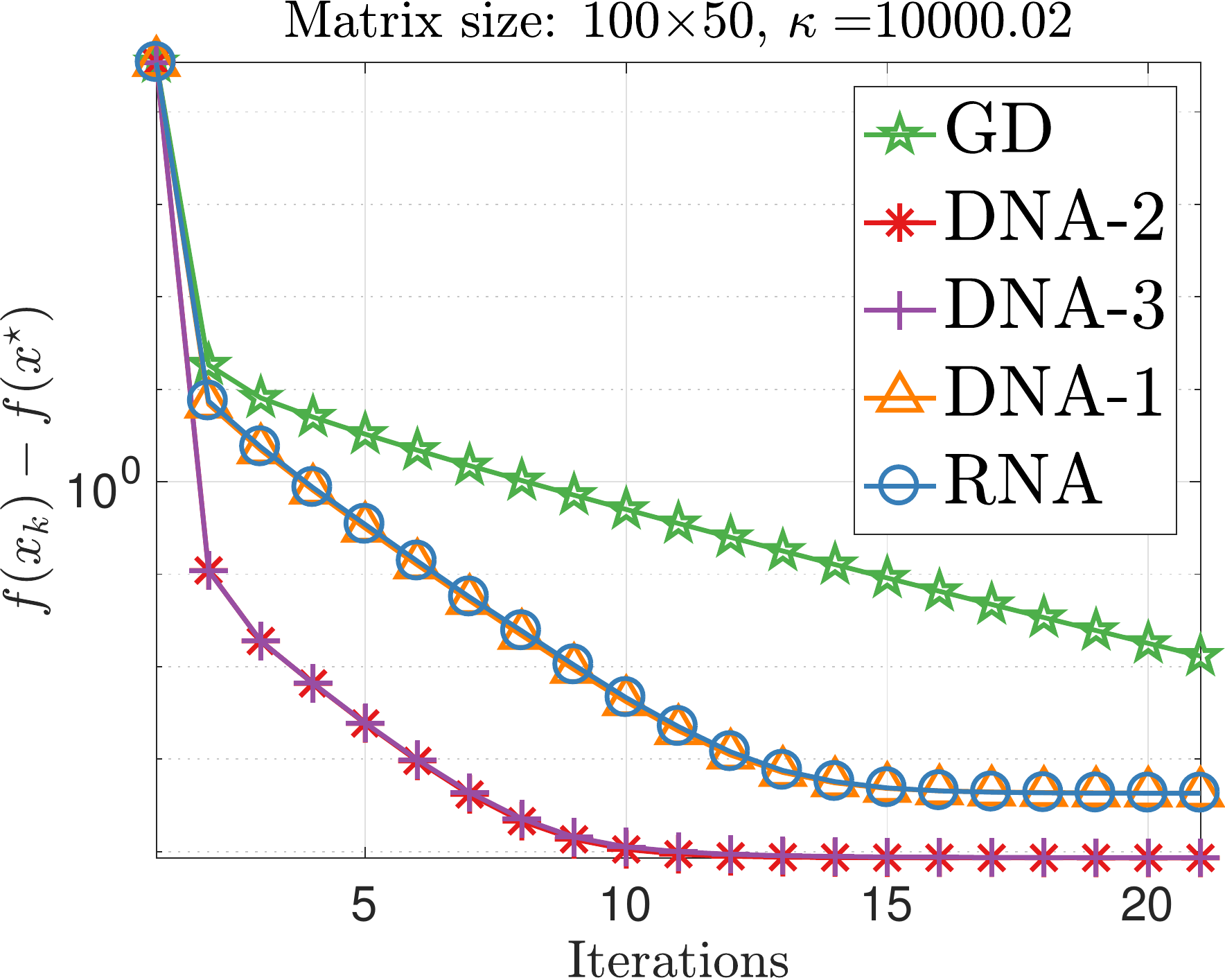}
	\end{minipage}
\begin{minipage}{0.3\textwidth}
		\centering
		\includegraphics[width=\textwidth]{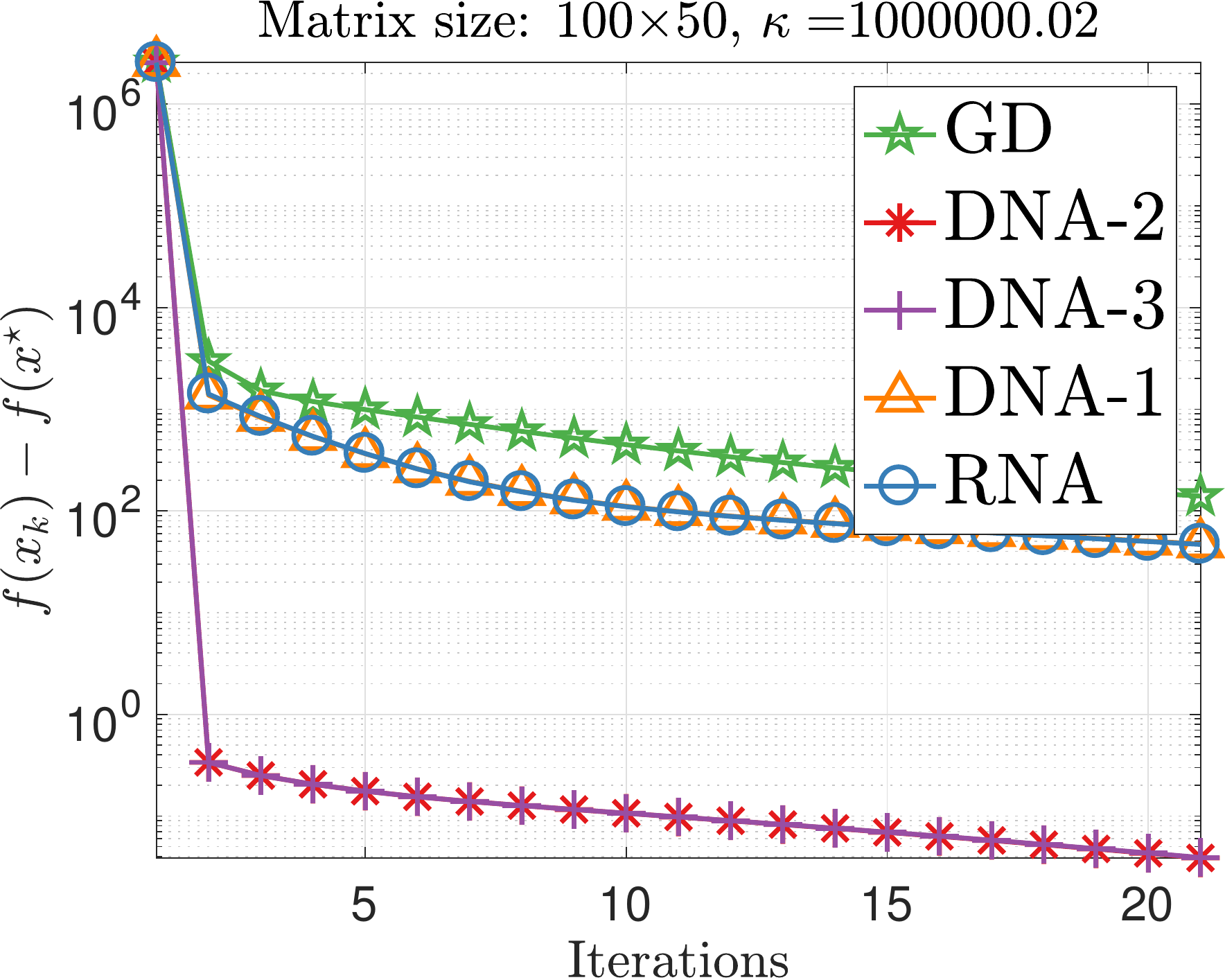}
	\end{minipage}
\begin{minipage}{0.3\textwidth}
		\centering
		\includegraphics[width=\textwidth]{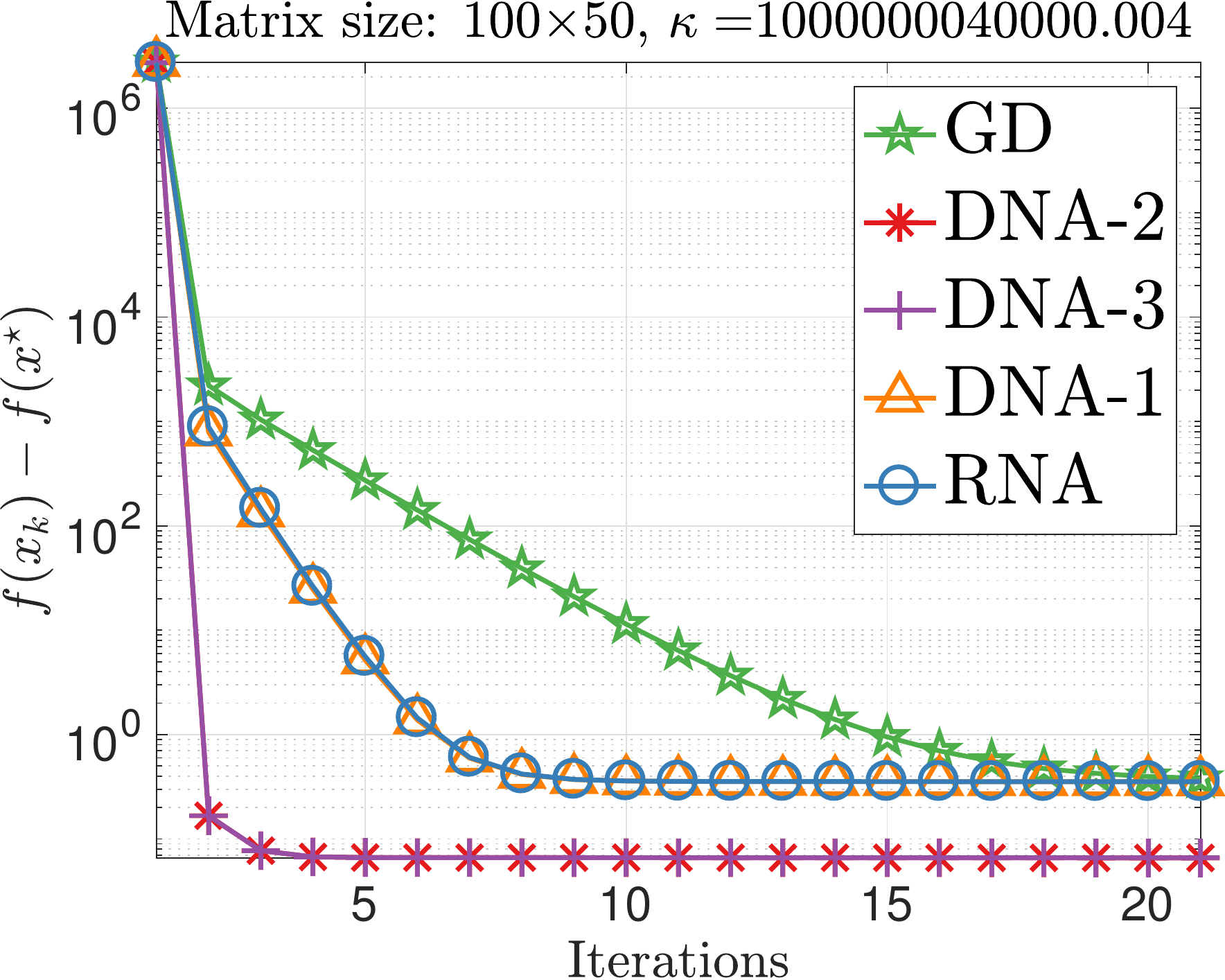}
	\end{minipage}
	\\
\begin{minipage}{0.3\textwidth}
		\centering
		\includegraphics[width=\textwidth]{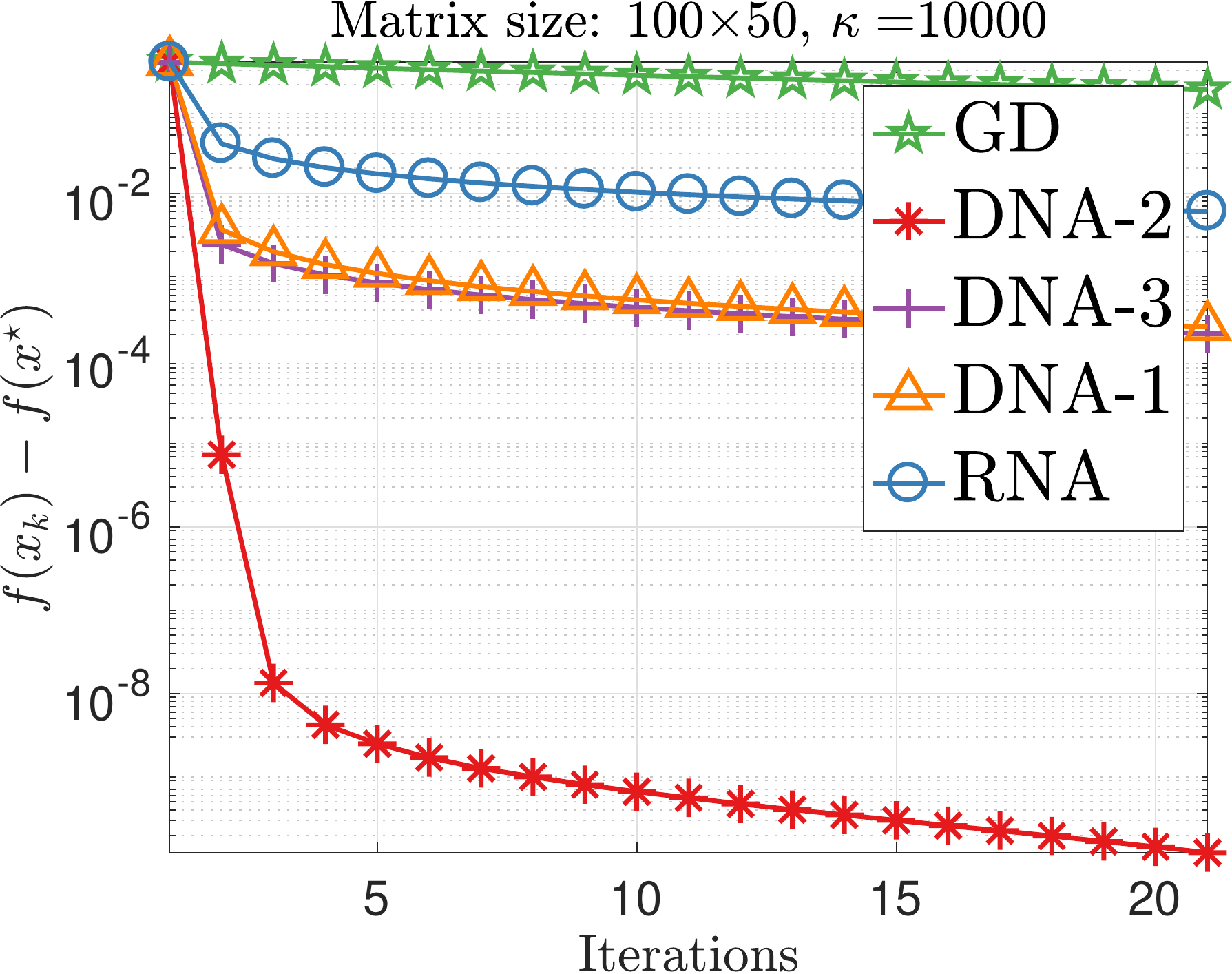}
	\end{minipage}
\begin{minipage}{0.3\textwidth}
		\centering
		\includegraphics[width=\textwidth]{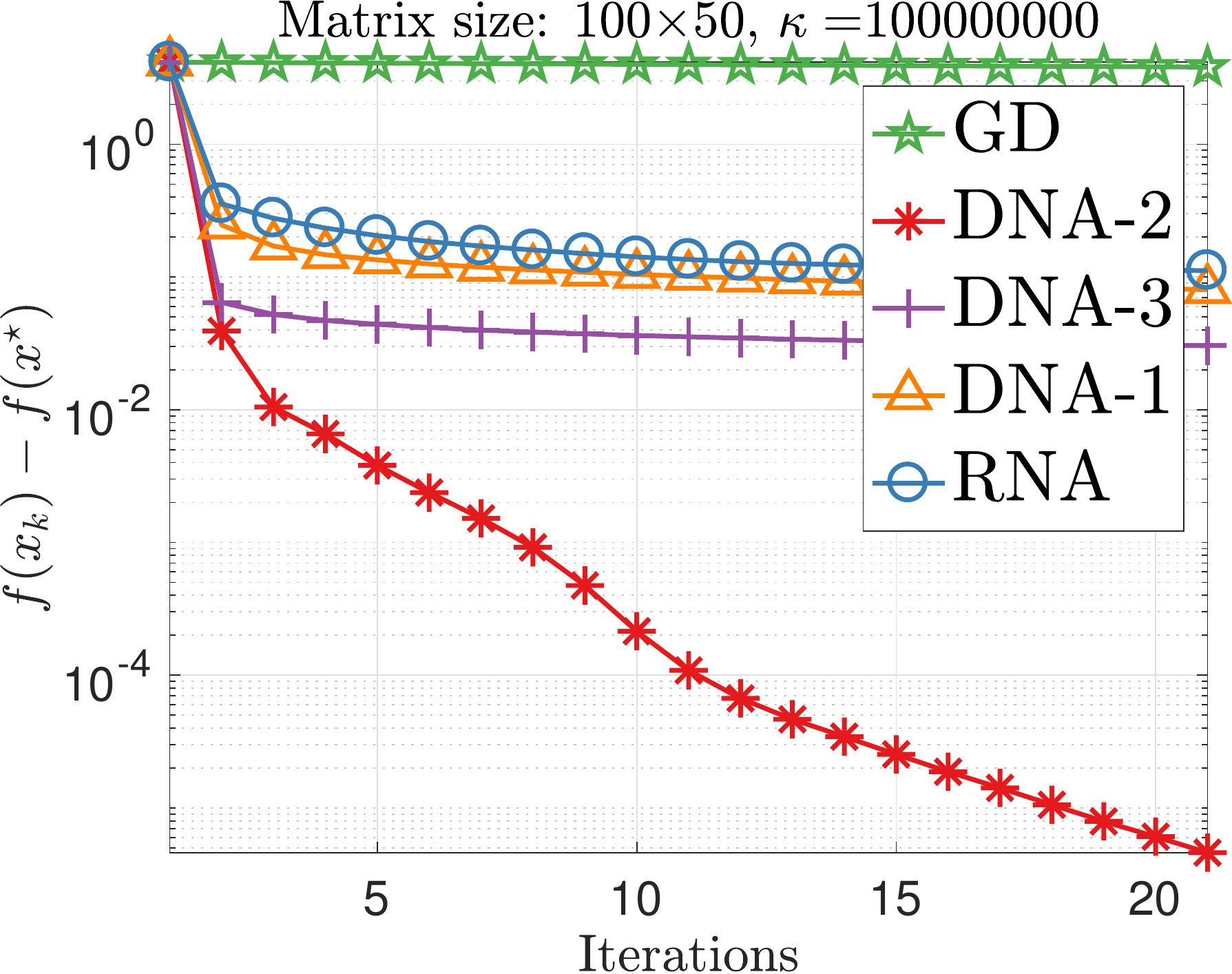}
	\end{minipage}
\begin{minipage}{0.3\textwidth}
		\centering
		\includegraphics[width=\textwidth]{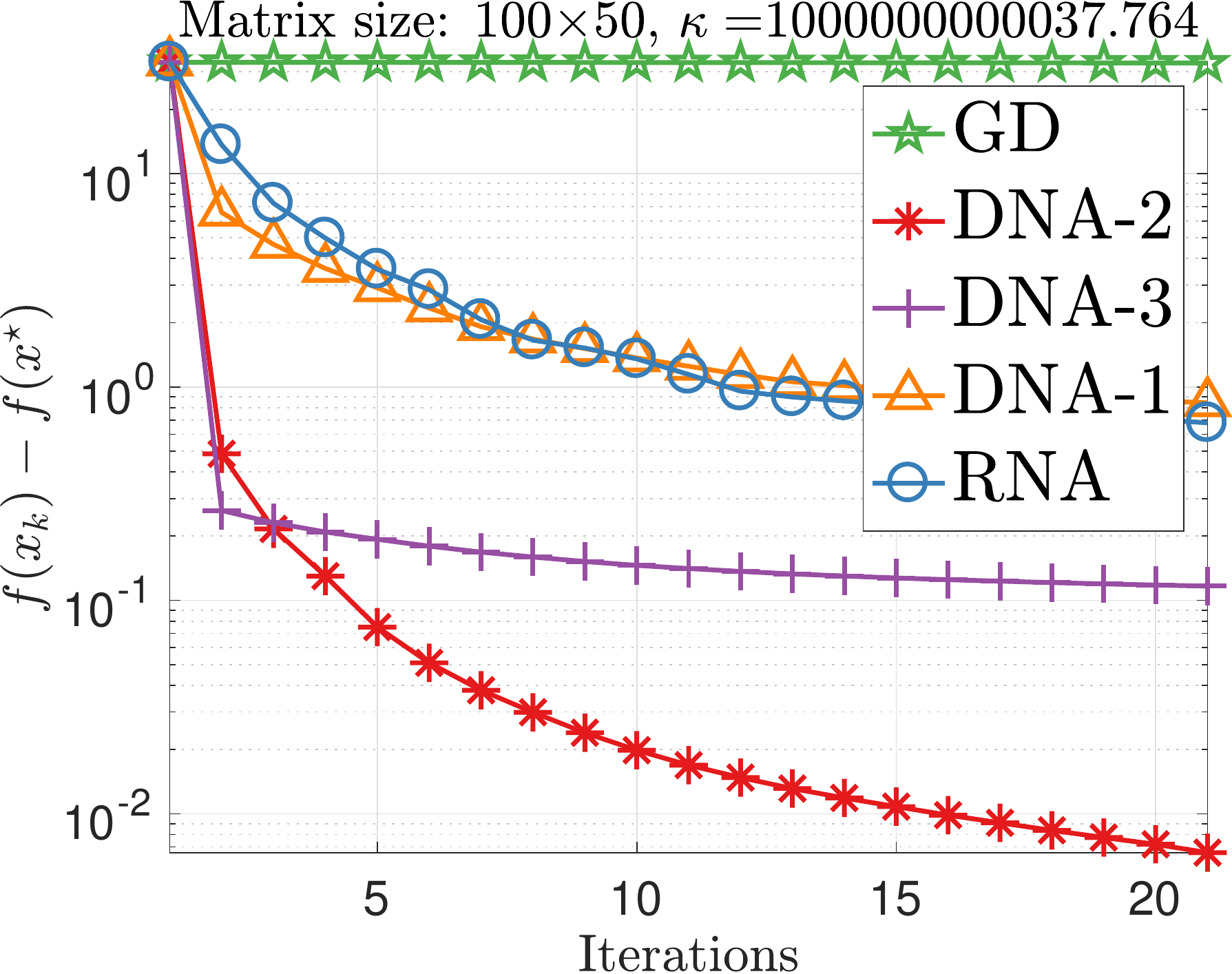}
	\end{minipage}
\caption{\small{Acceleration on synthetic data by using online acceleration scheme in \citep{rna_16}. First and second row represent quadratic, strong convex objective function as Least Squares and Ridge Regression, respectively. The last row represents non-quadratic but strong convex objective function as Logistic Regression. For all plots we use $k=3$. For RNA, we have $\lambda=10^{-8}$; for DNA, we set $\lambda=10^{-8}$, except for the last LR plot where for DNA-2, we set $\lambda=10$.} }\label{fig:synthetic_ol16}
\end{figure}

 \begin{algorithm}
	\SetAlgoLined
 	\SetKwInOut{Input}{Input}
	\SetKwInOut{Output}{Output}
     \SetKwInOut{Init}{Initialize}
     \SetKwInOut{Compute}{Compute}
\Input{Sequence of iterates $x_0,\ldots,x_{K+1}$ and sequence of step sizes $\alpha_0,\ldots,\alpha_{K}$\;}
	
		\nl Set $R =\left[\frac{x_0 - x_{1}}{\alpha_0} -\nabla f(0),\ldots,\frac{x_K - x_{K+1}}{\alpha_K} -\nabla f(0)  \right]$ and $X = [x_0, \ldots,x_K]$\;
		\nl Set $c$ as a solution of the linear system $X^\top R z = - X^\top \nabla f(0)$\;
		
	\Output{$x = \sum_{k = 0}^K c_k x_k$.}
 	\caption{DNA } \label{alg:fkRNA}
 \end{algorithm} 
 
\paragraph{Comments on the convergence of DNA.}
Let $H$ be the Hessian of $f$, where we assume that $f$  is quadratic.    
Also let $\lambda_{\rm max}(H)$ be the maximum eigenvalue of $H$ 
\begin{lemma}\label{lemma:krna}
 Let $c_D$ and $c_R$ be the extrapolation coefficients produced by DNA and RNA, respectively. Then 
$
\compactify{\|Xc_D-x^\star\|_H^2\le \|Xc_R-x^\star\|_H^2\le \lambda_{\rm max}(H)\|Xc_R-x^\star\|^2.}
$
\end{lemma}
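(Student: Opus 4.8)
The plan is to combine two elementary facts: for a convex quadratic $f$ with minimiser $x^\star$ the suboptimality gap equals half a squared $H$-norm, and DNA (with $g\equiv 0$) minimises $c\mapsto f(Xc)$ \emph{exactly} over all of $\R^{K+1}$, whereas RNA only produces some particular feasible $c_R\in\R^{K+1}$. Writing $f(x)=f(x^\star)+\tfrac12(x-x^\star)^\top H(x-x^\star)$ (using $\nabla f(x^\star)=0$ and that $f$ is quadratic), we obtain the identity $f(z)-f(x^\star)=\tfrac12\|z-x^\star\|_H^2$ for every $z\in\R^n$. This identity is where convexity of $f$ — the standing assumption in this part of the paper — enters; if $H$ is only positive semidefinite it still holds whenever a minimiser exists.

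First I would argue that $c_D$ is a global minimiser of the convex quadratic $h(c):=f(Xc)$. By Lemma \ref{lemma:dna1}, $c_D$ solves $X^\top R\,z=-X^\top\nabla f(0)$; since the GD residuals satisfy $\tfrac{x_i-x_{i+1}}{\alpha_i}=\nabla f(x_i)$ and $f$ is quadratic, one has $R=HX$, so this linear system is precisely the first-order system $\nabla h(c_D)=X^\top H X c_D+X^\top\nabla f(0)=0$. Because $h$ is convex, stationarity is equivalent to global optimality, hence $f(Xc_D)=\min_{c\in\R^{K+1}}f(Xc)\le f(Xc_R)$. Subtracting $f(x^\star)$ from both sides and invoking the gap identity yields $\tfrac12\|Xc_D-x^\star\|_H^2\le\tfrac12\|Xc_R-x^\star\|_H^2$, which is the left inequality of the lemma.

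The right inequality is the Rayleigh-quotient bound: $H$ is symmetric, so $v^\top Hv\le\lambda_{\rm max}(H)\,\|v\|^2$ for every $v\in\R^n$, and taking $v=Xc_R-x^\star$ finishes the proof. The argument is essentially bookkeeping; the only delicate points are (i) confirming that the DNA linear system returns a genuine minimiser rather than a mere stationary point, which is automatic from convexity of $h$, and (ii) the possible non-uniqueness of $c_D$ when $X^\top H X$ is singular — this is harmless, since every solution of the normal equations attains the same value $f(Xc_D)=\min_c f(Xc)$, so the claimed inequality holds for any such $c_D$.
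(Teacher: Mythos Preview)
Your argument is correct and follows the same route as the paper's own proof: write $f(x)-f(x^\star)=\tfrac12\|x-x^\star\|_H^2$, observe that $c_D$ minimises $c\mapsto f(Xc)$ over all of $\R^{K+1}$ (hence beats the particular choice $c_R$), and finish with the Rayleigh-quotient bound. The paper's proof is a one-line sketch that appeals to ``the definition of $c_R$ and $c_D$'' and to Proposition~2.2 of \citep{rna_16}; your version is simply more explicit, in particular your verification via Lemma~\ref{lemma:dna1} and convexity that the output of the DNA linear system really is a global minimiser of $h(c)=f(Xc)$.
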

\begin{theorem}\label{thm:dna_conv}
Denote  $\xi = \nicefrac{(\sqrt{L}-\sqrt{\mu})}{(\sqrt{L}+\sqrt{{\mu}})}.$ It is given in \citep{rna_16} that for the coefficients $c_R$ produced by Alg~\ref{alg:RNA}:
$
\compactify{\|Xc_R-x^\star\|^2\le \kappa(H)\tfrac{2\xi^k}{1+\xi^{2k}}\|x_0-x^\star\|^2. }
$
Further,  for Alg~\ref{alg:kRNA} we have
$
\compactify{\|Xc_D-x^\star\|_H^2\le \lambda_{\rm max}(H)\kappa(H)\tfrac{2\xi^k}{1+\xi^{2k}}\|x_0-x^\star\|^2.}
$
\end{theorem}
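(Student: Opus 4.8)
The plan is to derive Theorem~\ref{thm:dna_conv} by simply concatenating Lemma~\ref{lemma:krna} with the RNA estimate quoted from \citep{rna_16}; essentially all of the analytic work has already been done in the preceding lemmas, and what remains is bookkeeping.

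First I would recall why the DNA coefficients are optimal in the relevant sense. Since $f$ is quadratic with Hessian $H$ (positive definite, by $\mu$--strong convexity), we have $f(Xc) = f(x^\star) + \tfrac12\|Xc - x^\star\|_H^2$, so minimizing $c \mapsto f(Xc)$ over $\R^{K+1}$ is the same as minimizing $c \mapsto \|Xc - x^\star\|_H^2$. The vector $c_D$ returned by Alg~\ref{alg:fkRNA} solves the first-order optimality condition $X^\top R z = -X^\top\nabla f(0)$ of this convex problem (Lemma~\ref{lemma:dna1}), hence $Xc_D$ attains the minimum; in particular, comparing against the RNA output $c_R \in \R^{K+1}$, which is a feasible point of the same unconstrained problem, gives $\|Xc_D - x^\star\|_H^2 \le \|Xc_R - x^\star\|_H^2$. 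Combining this with the elementary bound $v^\top H v \le \lambda_{\max}(H)\|v\|^2$ applied to $v = Xc_R - x^\star$ yields the chain $\|Xc_D - x^\star\|_H^2 \le \|Xc_R - x^\star\|_H^2 \le \lambda_{\max}(H)\|Xc_R - x^\star\|^2$, which is exactly Lemma~\ref{lemma:krna}; I would cite the lemma here rather than reprove it.

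Then I would substitute the RNA convergence bound. Under the hypotheses already in force --- $f$ being $L$--smooth and $\mu$--strongly convex, $\{x_k\}$ produced by \eqref{GD}, and the RNA regularization parameter taken to zero --- \citep{rna_16} gives $\|Xc_R - x^\star\|^2 \le \kappa(H)\,\tfrac{2\xi^k}{1+\xi^{2k}}\,\|x_0 - x^\star\|^2$ with $\xi = (\sqrt L - \sqrt\mu)/(\sqrt L + \sqrt\mu)$. Plugging this into the chain from the previous paragraph gives
\[
\|Xc_D - x^\star\|_H^2 \;\le\; \lambda_{\max}(H)\,\|Xc_R - x^\star\|^2 \;\le\; \lambda_{\max}(H)\,\kappa(H)\,\frac{2\xi^k}{1+\xi^{2k}}\,\|x_0 - x^\star\|^2,
\]
which is the claimed estimate. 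The only care-points are (i) checking that the hypotheses of Lemma~\ref{lemma:krna} and of the cited RNA bound refer to the same instance (identical iterate sequence, identical window length, quadratic strongly convex $f$), and (ii) noting that $c_D$ need not be the unique solution of the linear system when $X$ is rank-deficient, but $Xc_D$ --- and hence $f(Xc_D) = f(x^\star) + \tfrac12\|Xc_D - x^\star\|_H^2$ --- is nonetheless well-defined and minimal, so the argument is unaffected. I do not anticipate a substantive obstacle: the extra factor $\lambda_{\max}(H)$ in the final bound is precisely the cost, already isolated in Lemma~\ref{lemma:krna}, of converting the $H$--norm in which DNA is naturally optimal into the Euclidean norm in which the RNA guarantee is phrased.
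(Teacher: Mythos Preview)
Your proposal is correct and follows essentially the same route the paper takes: the theorem is stated right after Lemma~\ref{lemma:krna} precisely because the second bound is obtained by chaining that lemma with the RNA estimate quoted from \citep{rna_16}, and the paper offers no separate proof. One small remark: the theorem references Alg~\ref{alg:kRNA} (DNA-1, the constrained version), whereas you argue via Alg~\ref{alg:fkRNA} (unconstrained DNA); the inequality $\|Xc_D-x^\star\|_H^2\le\|Xc_R-x^\star\|_H^2$ holds in either case since $c_R$ satisfies $\sum_k c_k=1$ and is therefore feasible for both problems, but you may want to align the reference with the statement.
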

\begin{remark}
The convergence rate for DNA for quadratic functions in Theorem \ref{thm:dna_conv} is the same as that for Krylov subspace methods (for example, conjugate gradient algorithm) up to a multiplicative scalar. 
\end{remark}

However, numerically DNA is unstable like RNA without regularization. In fact, the matrix $X^\top R$ can be very ill-conditioned and can lead to large errors in computing $c^\star$. Moreover, we accumulate errors in approximating the gradient via linearization as  our approximation of the gradient is valid only in the neighborhood of the iterates $x_0,\ldots,x_K$. To solve these problems, we propose three regularized versions of DNA by using three different regularizers in the form of $g(c)$ and show that they work well in practice. But one can explore different forms of $g(c)$ as regularizer.~We explain them in the following sections. 
\begin{algorithm}
	\SetAlgoLined
 	\SetKwInOut{Input}{Input}
	\SetKwInOut{Output}{Output}
     \SetKwInOut{Init}{Initialize}
     \SetKwInOut{Compute}{Compute}
\Input{Sequence of iterates $x_0,\ldots,x_{K+1}$; sequence of step sizes $\alpha_0,\ldots,\alpha_{K}$; and $\mathbbm{1}\in\R^{K+1},$ a vector of all 1s\;}
	\nl Set $\tilde{R} =\left[\frac{x_0 - x_{1}}{\alpha_0},\ldots,\frac{x_K - x_{K+1}}{\alpha_K}  \right]$ and $X = [x_0, \ldots,x_K]$\;
		\nl Solve the linear system for $z\in\R^{K+1}$: $X^\top\tilde{R} z = \mathbbm{1}$\;
		\nl Set $c = \frac{z}{z^\top\mathbbm{1}}\in\mathbb{R}^{K+1}$\;	
	\Output{$x = \sum_{k = 0}^K c_k x_k$.}
 	\caption{
 	\kRNA -1} \label{alg:kRNA}
 \end{algorithm}

\vspace{-05pt}
 \subsection{DNA-1}
 \vspace{-05pt}

 This regularized version of \kRNA is directly influenced by Scieur et al. \citep{rna_16}. 
Here, we generate the extrapolated point $x$ as a linear combination of the set of $K+1$ iterates  such that, $x=\sum_k c_kx_k$. Additionally, as in \citep{rna_16,rna_18}, we assume the sum of the coefficients $c_k$ to be equal to 1.
Therefore, for $c\in\R^{K+1}$ with sum of its elements equal to 1, we set $g(c) =  1_{\sum_i c_i=1}$  in \eqref{mainDNA} and consider the following constrained problem:
 \begin{eqnarray}\label{eq:krna}
\compactify {{\boxed{\min_{c \in\R^{K+1}}\ f(Xc)+\lambda 1_{S}(c)=\min_{c} \left\{ f\left(Xc \right) \;:\; c\in\R^{K+1}, \; \compactify \sum_{k = 0}^K c_k =1 \right\},}}}
 \end{eqnarray}
where $X = [x_0, \ldots,x_K]$. We call this version of DNA as DNA-1. 
 \begin{lemma}\label{lemma:ckrna_lemma}
 If the objective function $f$ is quadratic and $X^\top \tilde{R}$ is nonsingular then 
  $c =  (X^\top \tilde{R})^{-1}\mathbbm{1}  / \delta, $ where $\delta=\mathbbm{1}^\top   (X^\top \tilde{R} )^{-1}\mathbbm{1},
  $
 and $\mathbbm{1}$ is the vector of dimension $K+1$ with all the components equal to 1 and   $ \compactify{\tilde{R}=\left[\nicefrac{(x_0 - x_{1})}{\alpha_0},\ldots,\nicefrac{(x_K - x_{K+1})}{\alpha_K}  \right]}$.
 \end{lemma}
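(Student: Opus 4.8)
The plan is to reduce the constrained problem \eqref{eq:krna} to its first–order (Lagrangian) optimality system and then use the affine constraint $\mathbbm{1}^\top c = 1$ to make the matrix $X^\top \tilde R$ materialize.

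First I would write the quadratic objective as $f(x) = \tfrac12 x^\top H x + \nabla f(0)^\top x + f(0)$, so that $\nabla f(x) = Hx + \nabla f(0)$. Since $\{x_k\}$ is produced by GD, for each $k$ we have $\tfrac{x_k - x_{k+1}}{\alpha_k} = \nabla f(x_k) = Hx_k + \nabla f(0)$; i.e. the $k$-th column of $\tilde R$ equals $Hx_k + \nabla f(0)$ (consistent with Lemma~\ref{lemma:dna1}, where $R_i = \tilde R_i - \nabla f(0) = H x_i$). Consequently, for any $c$ with $\mathbbm{1}^\top c = 1$ one has the key identity $\tilde R c = \sum_k c_k\bigl(Hx_k + \nabla f(0)\bigr) = HXc + (\mathbbm{1}^\top c)\nabla f(0) = HXc + \nabla f(0)$.

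Next I would substitute $x = Xc$ to express the objective as the quadratic $\varphi(c) := f(Xc) = \tfrac12 c^\top (X^\top H X) c + (X^\top \nabla f(0))^\top c + f(0)$ in the variable $c$, and form the Lagrangian $\mathcal L(c,\nu) = \varphi(c) + \nu(\mathbbm{1}^\top c - 1)$ for the single affine constraint. Setting $\nabla_c \mathcal L = 0$ gives $X^\top H X c + X^\top \nabla f(0) + \nu\,\mathbbm{1} = 0$, i.e. $X^\top\bigl(HXc + \nabla f(0)\bigr) = -\nu\,\mathbbm{1}$. Applying the identity from the previous paragraph (valid because $c$ is feasible), the left side is exactly $X^\top \tilde R c$, so $X^\top \tilde R c = -\nu\,\mathbbm{1}$. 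Since $X^\top \tilde R$ is assumed nonsingular I can invert to get $c = -\nu\,(X^\top \tilde R)^{-1}\mathbbm{1}$, and imposing $\mathbbm{1}^\top c = 1$ forces $-\nu\,\mathbbm{1}^\top (X^\top \tilde R)^{-1}\mathbbm{1} = 1$, i.e. $-\nu = 1/\delta$ with $\delta = \mathbbm{1}^\top (X^\top \tilde R)^{-1}\mathbbm{1}$ (which is therefore nonzero whenever the reduced system is consistent). This yields $c = (X^\top \tilde R)^{-1}\mathbbm{1}/\delta$; since $\varphi$ is quadratic, this is the unique stationary point of the KKT system, and the global minimizer when $H \succeq 0$.

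The only non-mechanical step — the \emph{main obstacle}, such as it is — is recognizing that the constraint $\mathbbm{1}^\top c = 1$ is precisely what converts the naturally appearing quantity $X^\top H X c + X^\top \nabla f(0)$ (which involves $R$, not $\tilde R$) into $X^\top \tilde R c$; the remainder is routine Lagrange-multiplier bookkeeping. A minor point to address is consistency of the reduced linear system (i.e. $\delta \neq 0$), and, if "solution" is to be read as "global minimizer" rather than "KKT point," invoking convexity of the quadratic $f$.
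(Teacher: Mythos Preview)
Your proposal is correct and follows essentially the same route as the paper: form the Lagrangian for \eqref{eq:krna}, use the affinity of $\nabla f$ together with $\mathbbm{1}^\top c=1$ to rewrite $\nabla f(Xc)=\tilde R c$, and then solve the resulting linear system $X^\top\tilde R c=-\nu\,\mathbbm{1}$ with the constraint fixing the multiplier. The only cosmetic difference is that you spell out the Hessian $H$ and the constant $\nabla f(0)$ explicitly, whereas the paper goes directly through $\nabla f(Xc)=\sum_k c_k\nabla f(x_k)=\sum_k c_k\frac{x_k-x_{k+1}}{\alpha_k}$.
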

 \begin{figure*}
\centering
	\begin{minipage}{0.24\textwidth}
		\centering
		\includegraphics[width=\textwidth]{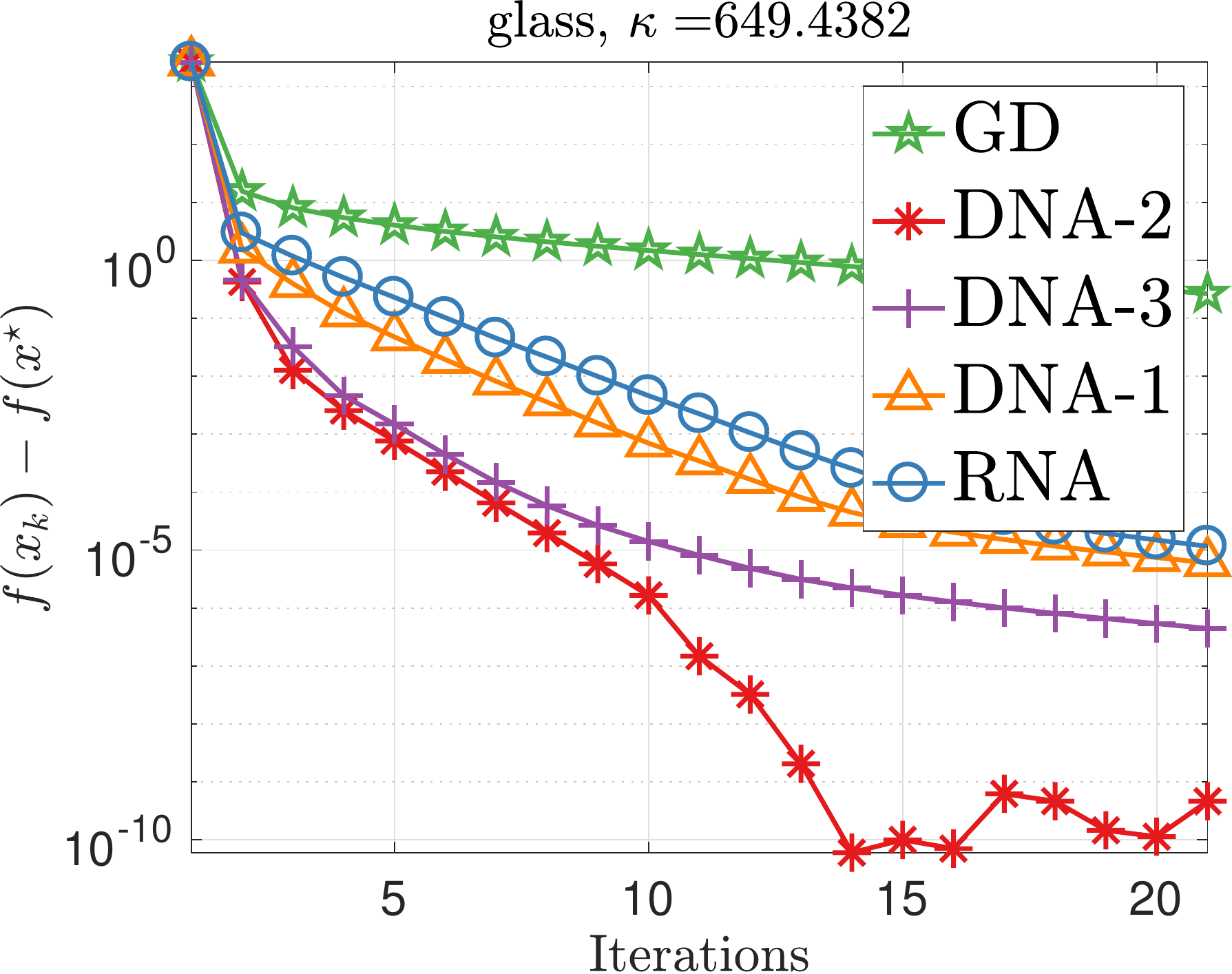}
		\end{minipage}
\begin{minipage}{0.24\textwidth}
		\centering
		\includegraphics[width=\textwidth]{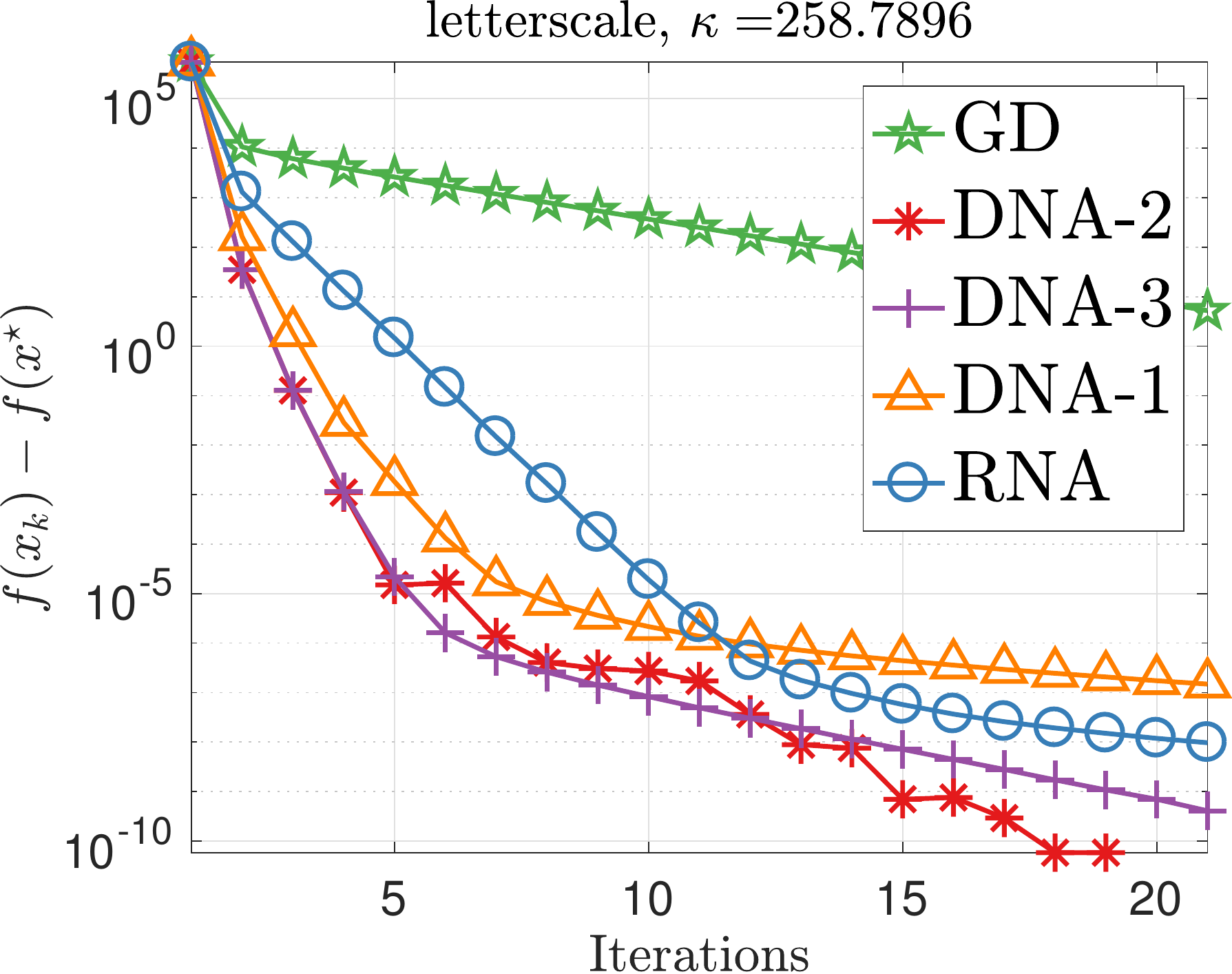}
		\end{minipage}
\begin{minipage}{0.24\textwidth}
		\centering
		\includegraphics[width=\textwidth]{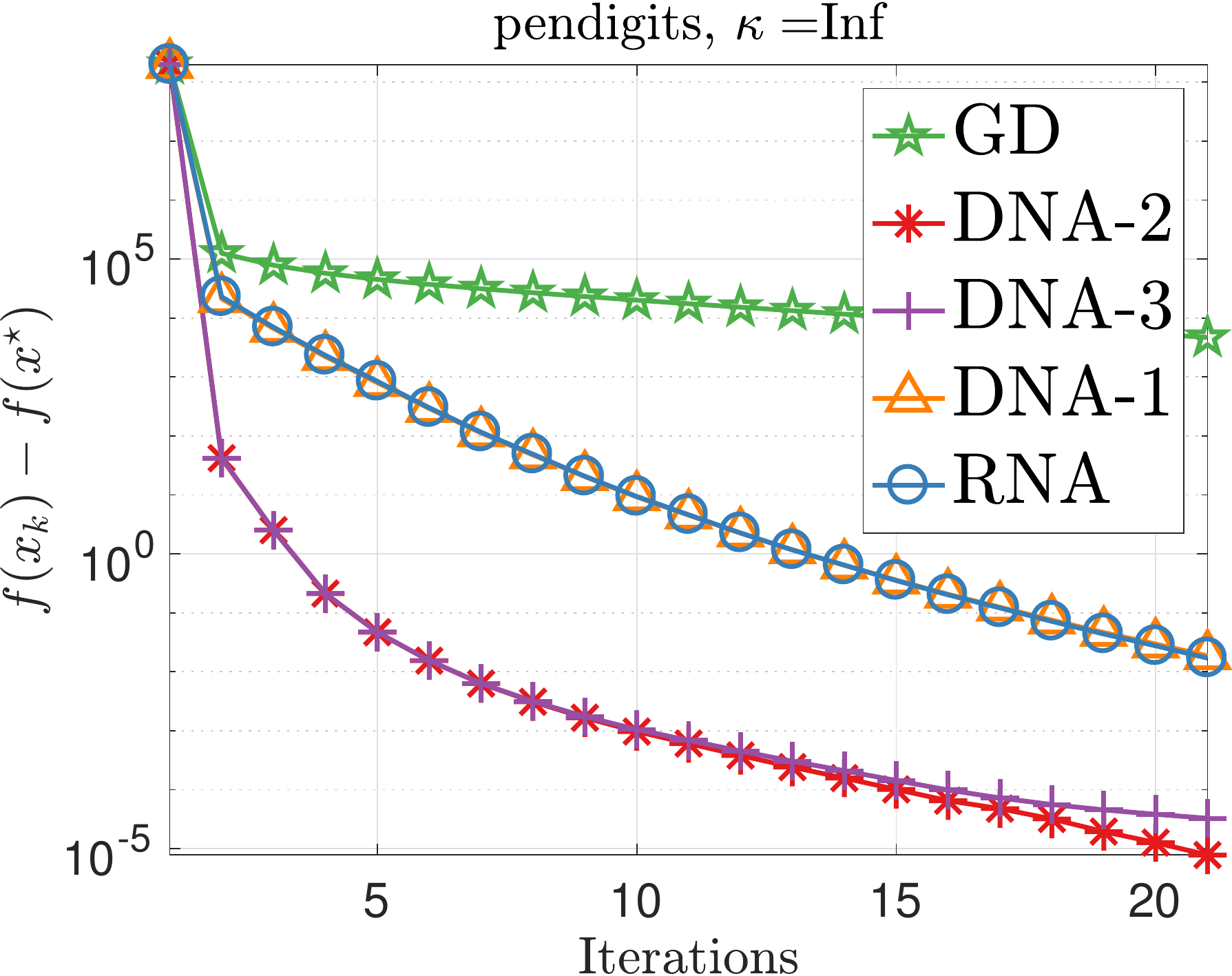}
		\end{minipage}
		\begin{minipage}{0.24\textwidth}
		\centering
		\includegraphics[width=\textwidth]{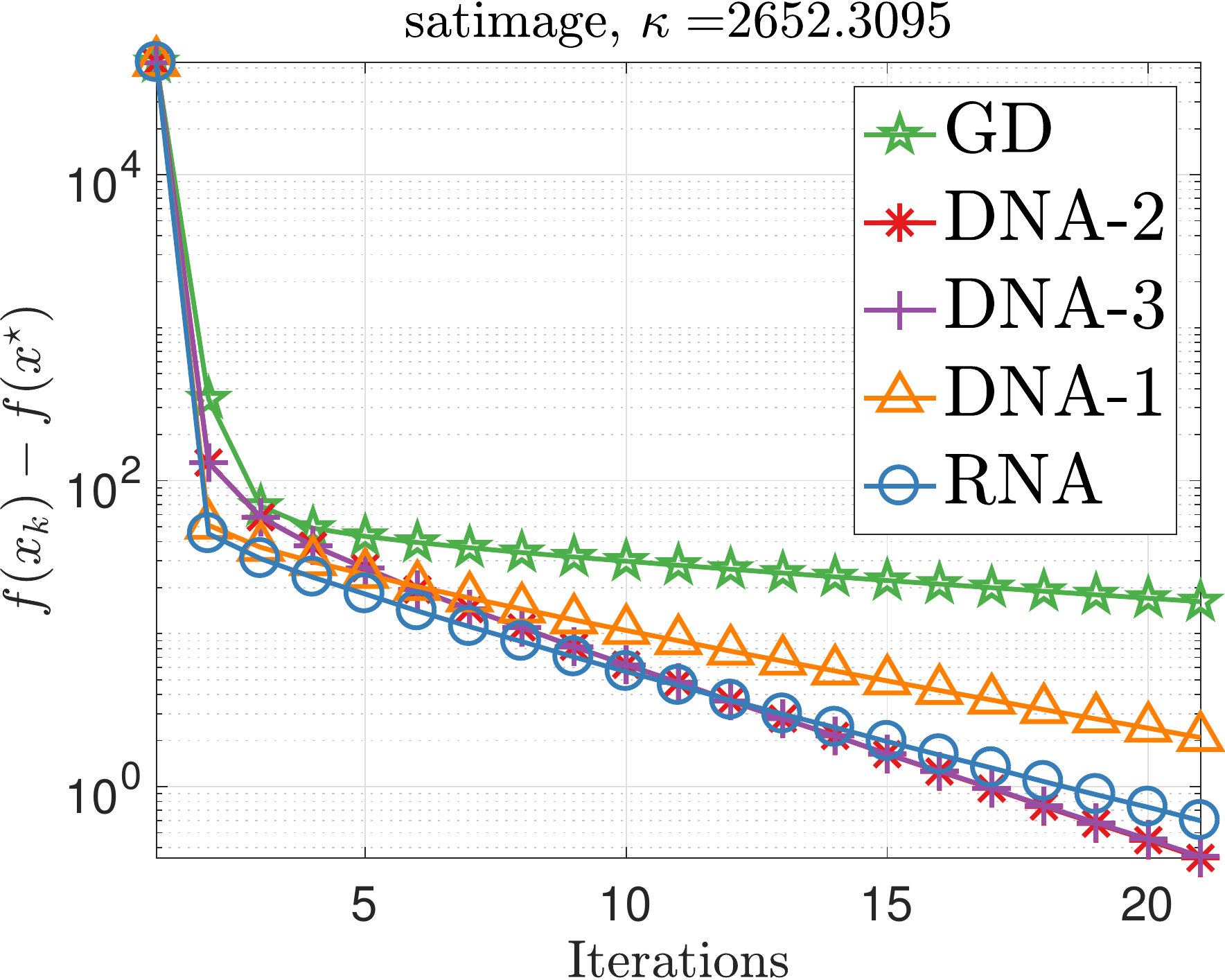}
		\end{minipage}
		\\
\begin{minipage}{0.24\textwidth}
		\centering
		\includegraphics[width=\textwidth]{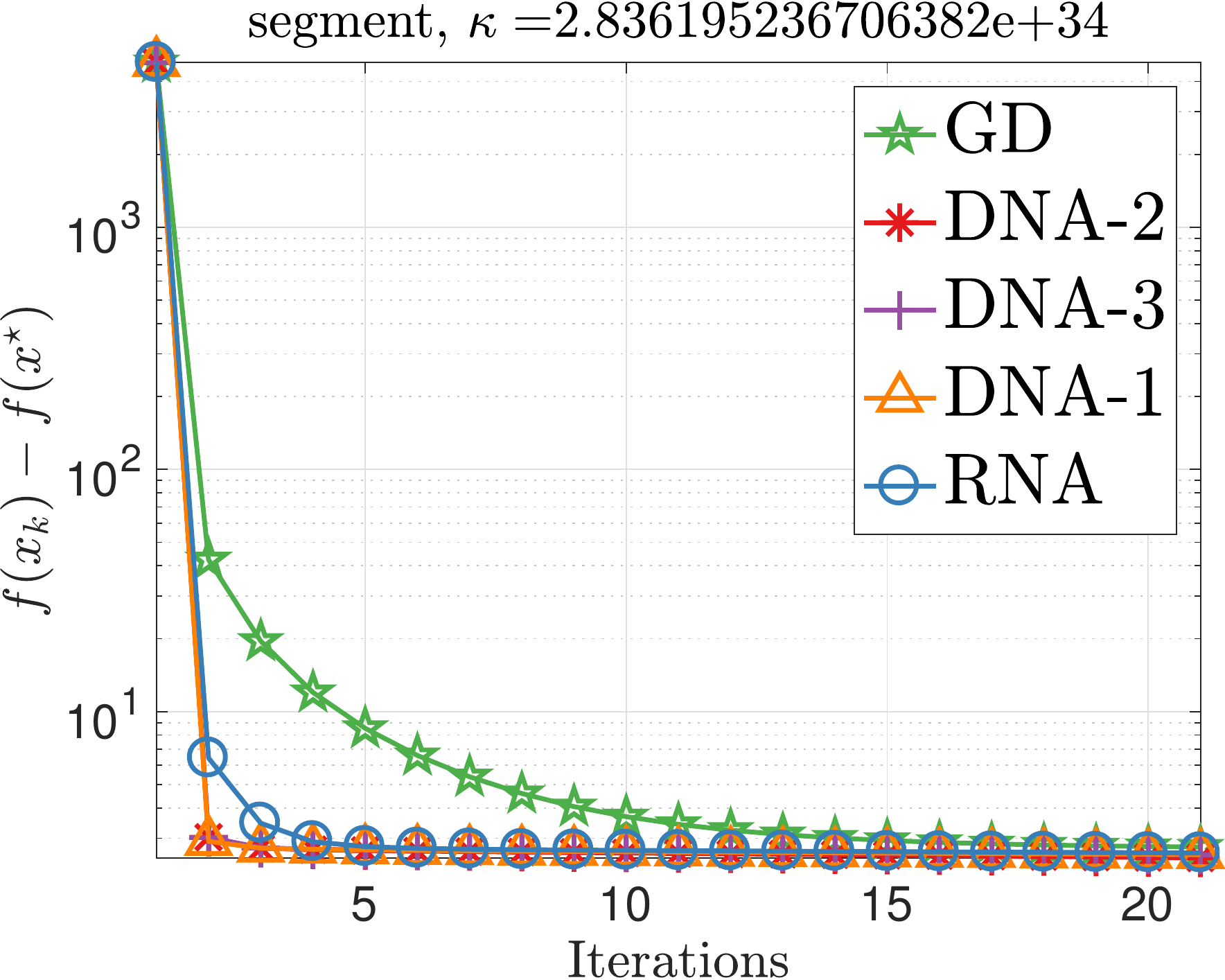}
		\end{minipage}
\begin{minipage}{0.24\textwidth}
		\centering
		\includegraphics[width=\textwidth]{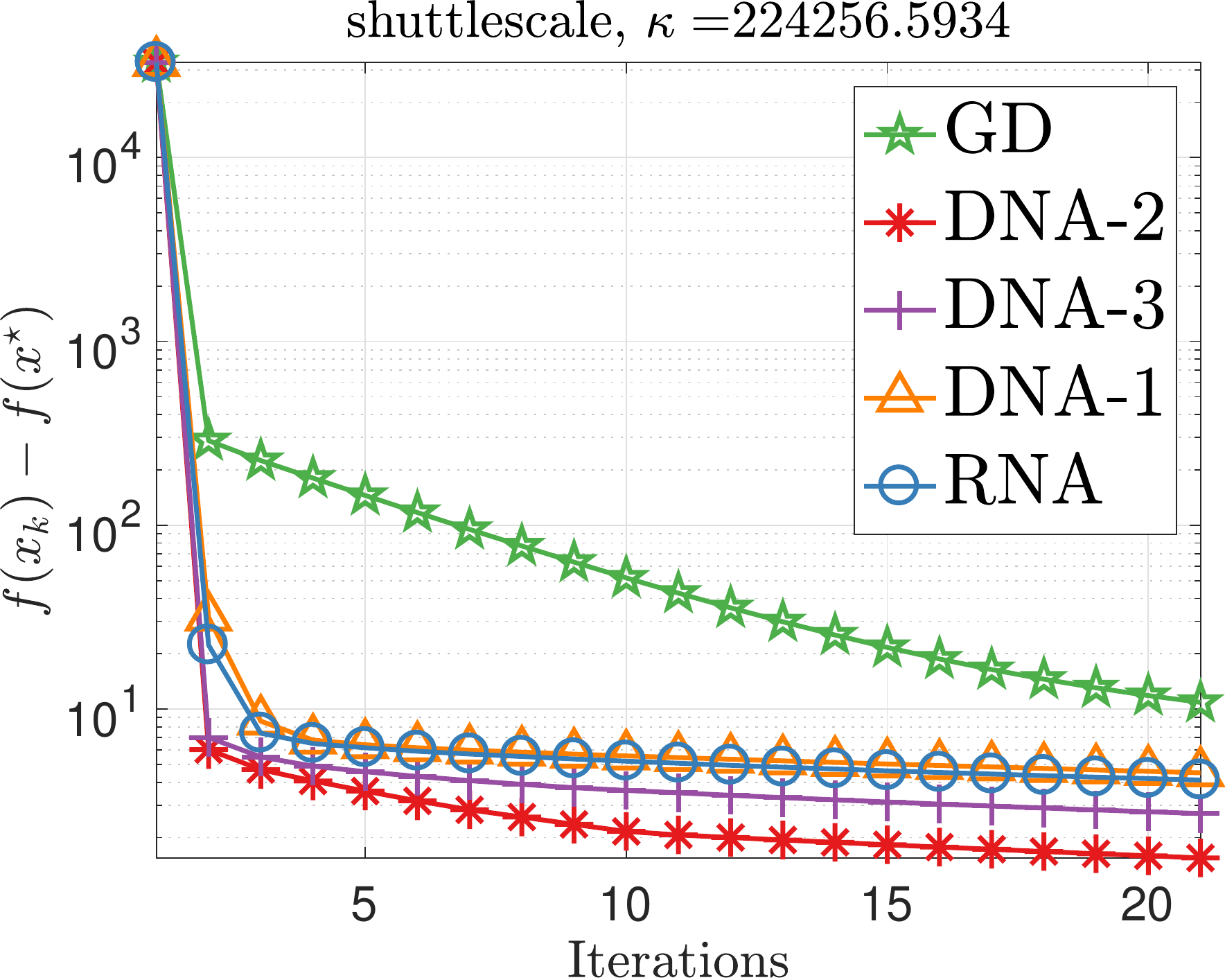}
		\end{minipage}
\begin{minipage}{0.24\textwidth}
		\centering
		\includegraphics[width=\textwidth]{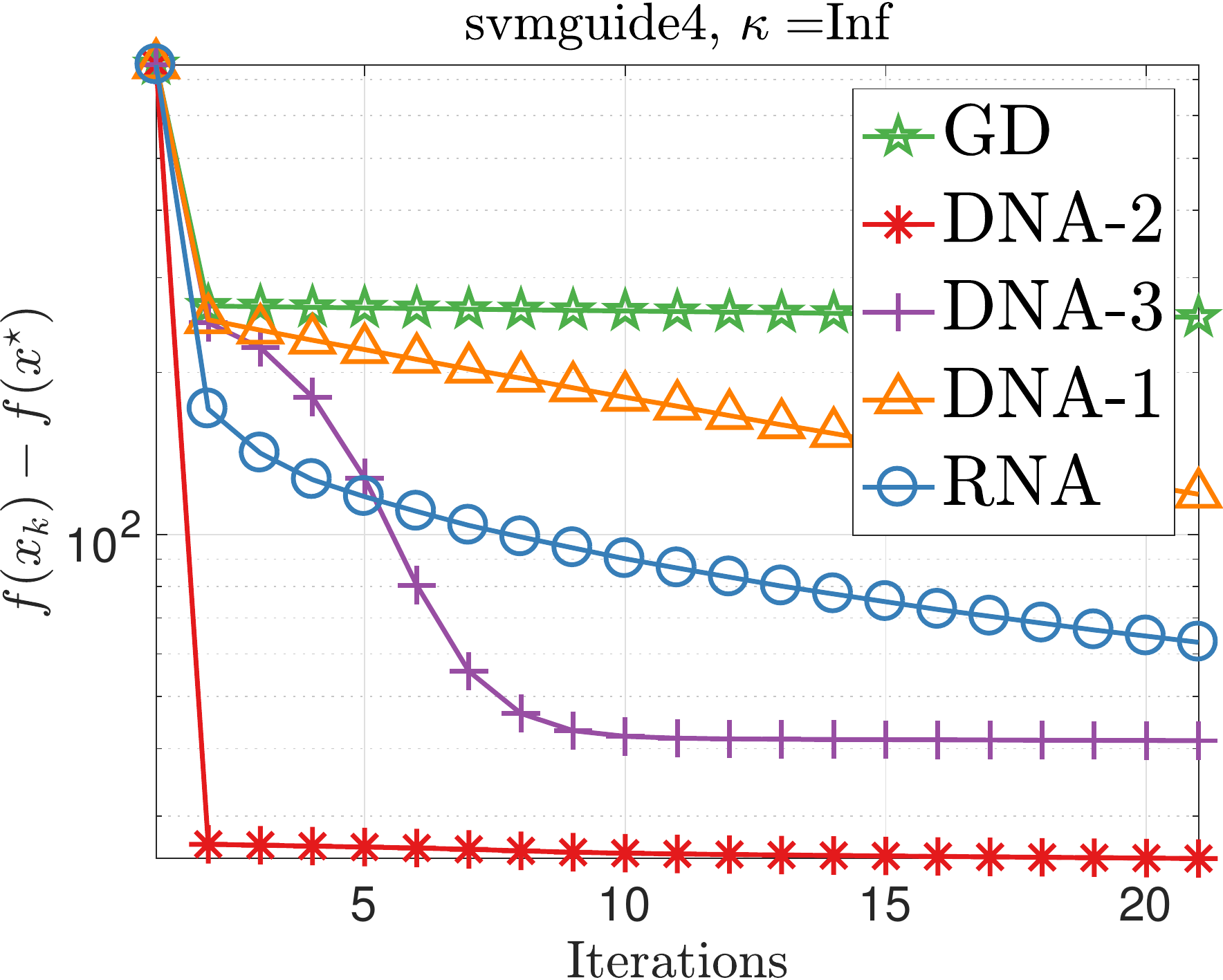}
		\end{minipage}
\begin{minipage}{0.24\textwidth}
		\centering
		\includegraphics[width=\textwidth]{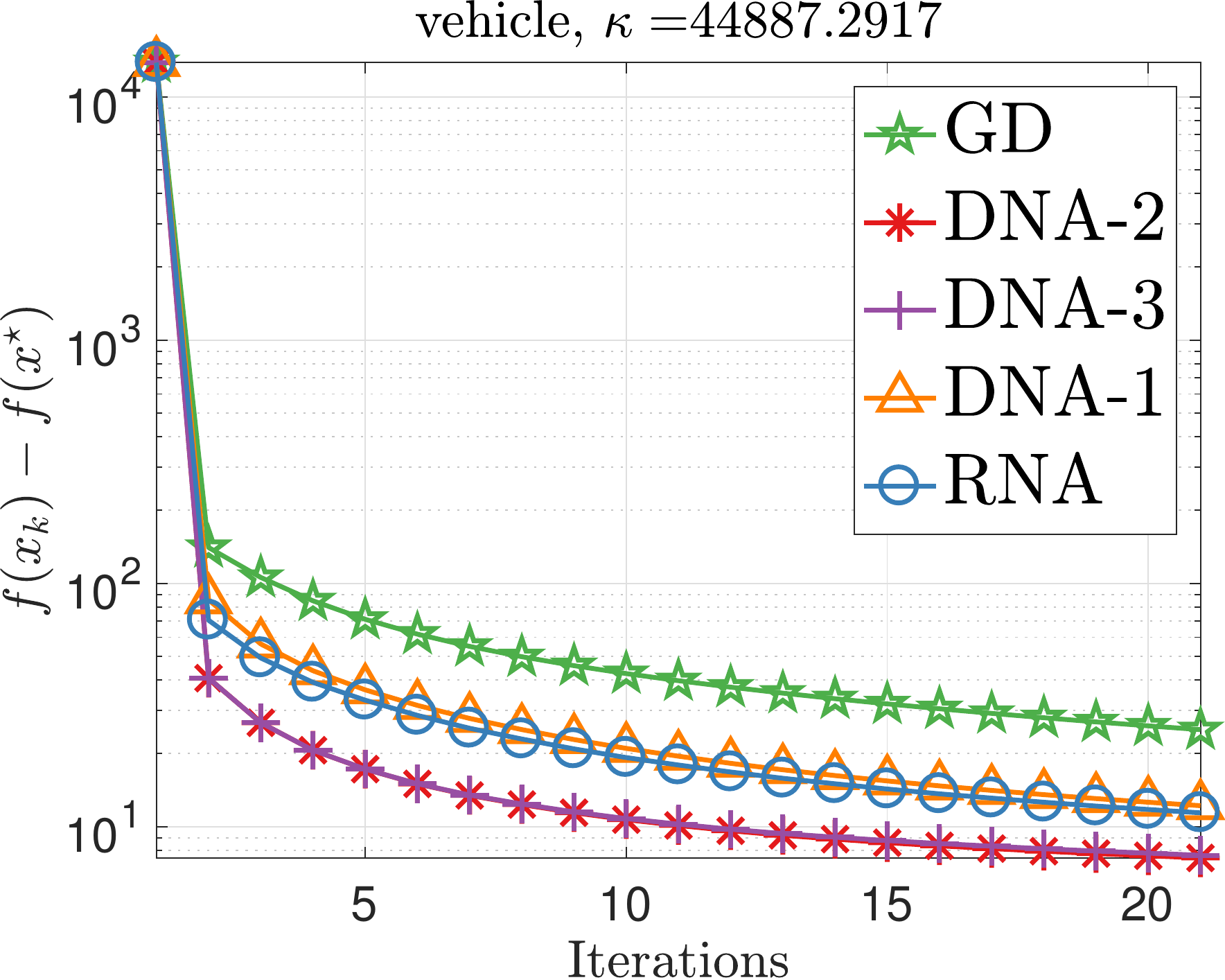}
		\end{minipage}
\caption{\small{Acceleration on {\tt LIBSVM} dataset by using online acceleration scheme in \citep{rna_16} on Least Squares problems. For all datasets, we use $k=3$. For RNA and DNA, we set $\lambda=10^{-8}$.}}\label{fig:real_ls_ol16}
\end{figure*}
\begin{figure*}
\centering
	\begin{minipage}{0.24\textwidth}
		\centering
		\includegraphics[width=\textwidth]{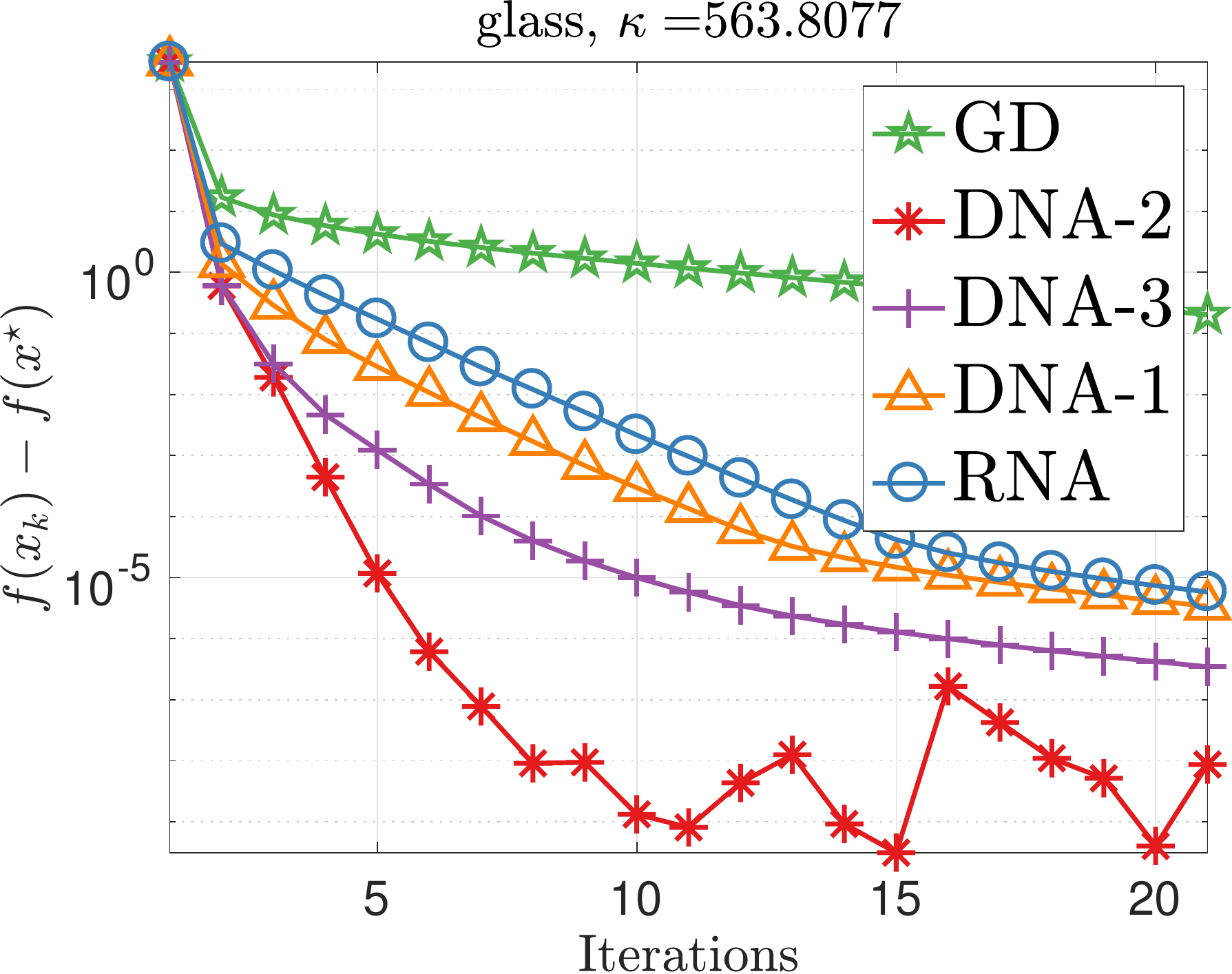}
		\end{minipage}
\begin{minipage}{0.24\textwidth}
		\centering
		\includegraphics[width=\textwidth]{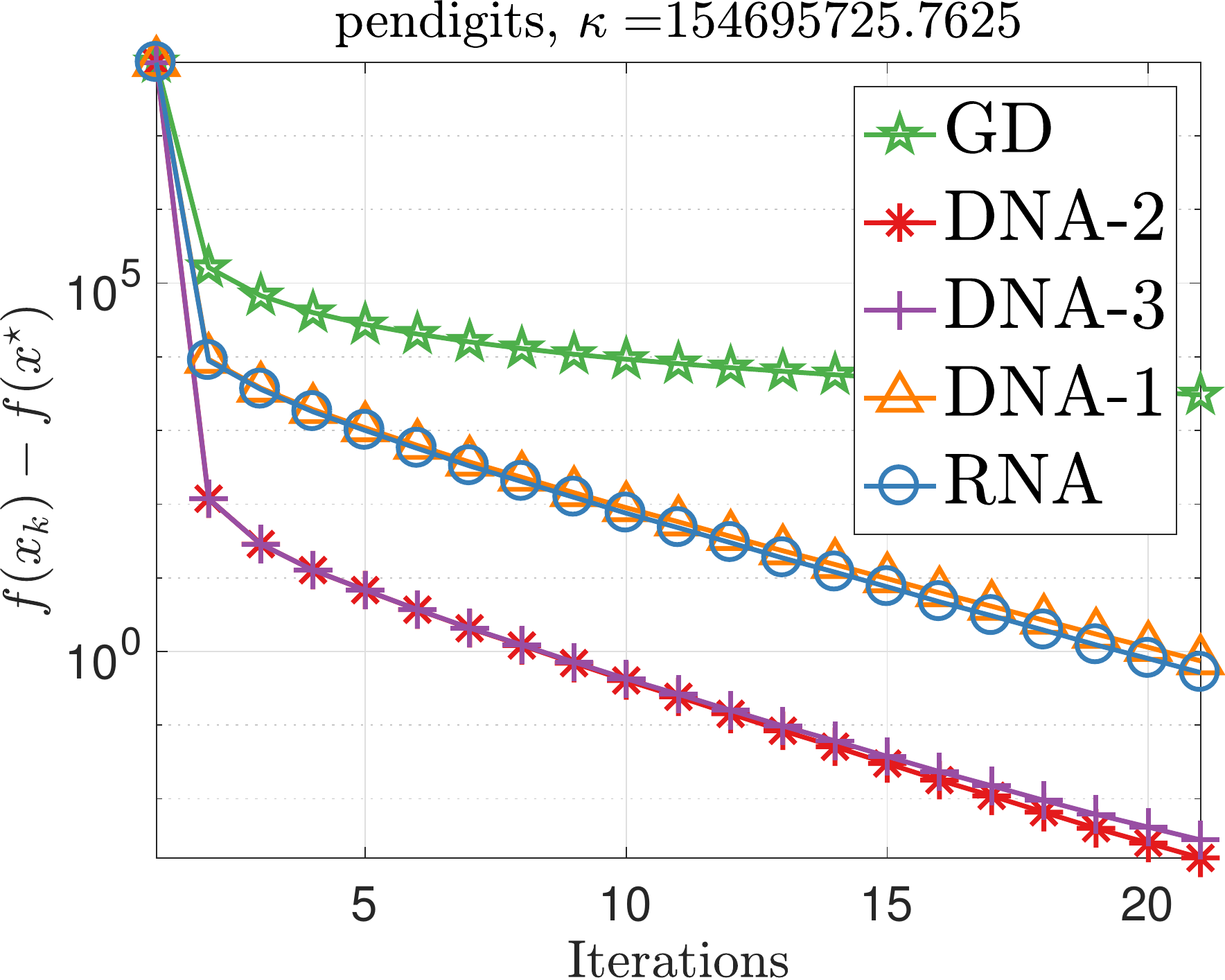}
		\end{minipage}
		\begin{minipage}{0.24\textwidth}
		\centering
		\includegraphics[width=\textwidth]{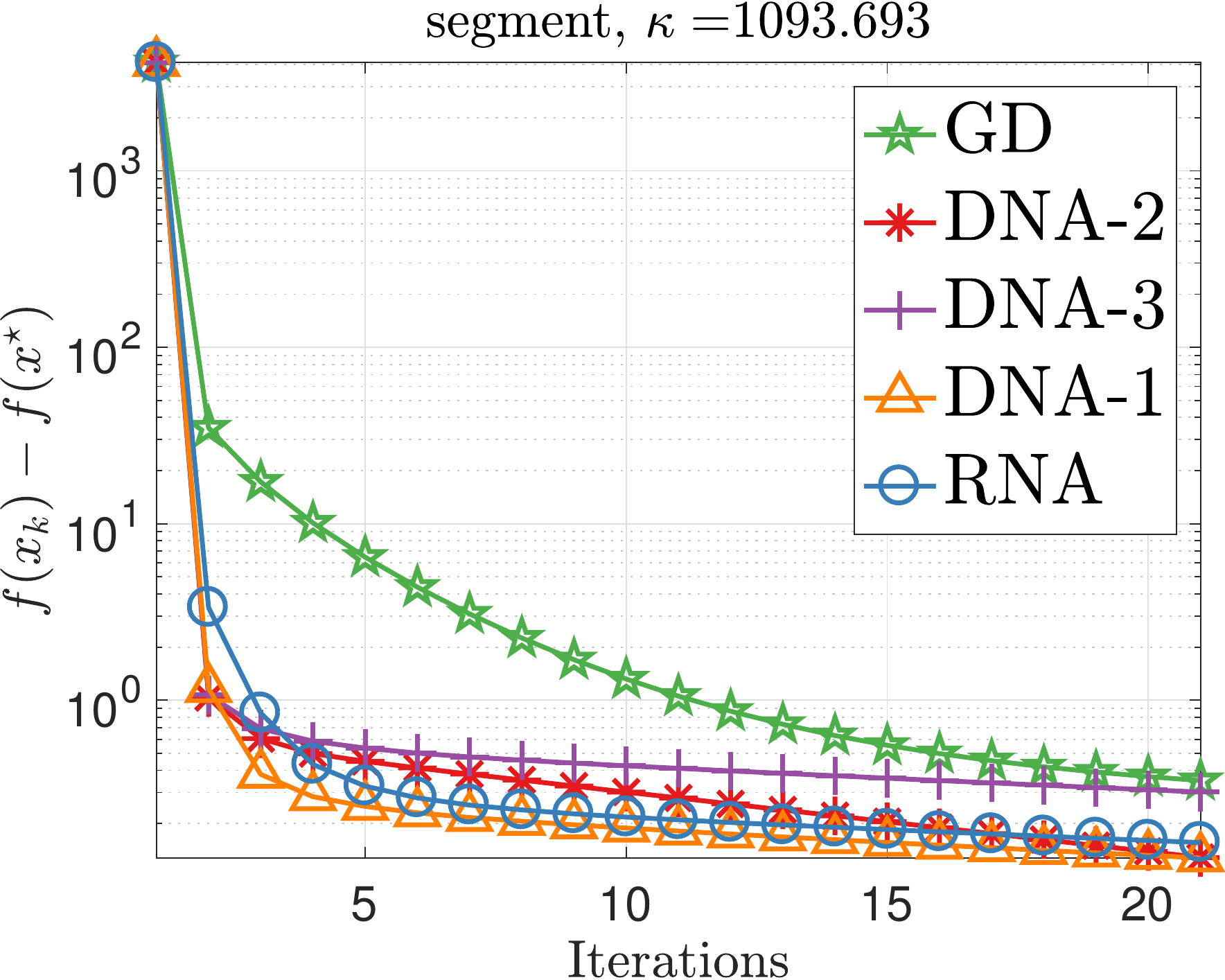}
			\end{minipage}		
\begin{minipage}{0.24\textwidth}
		\centering
		\includegraphics[width=\textwidth]{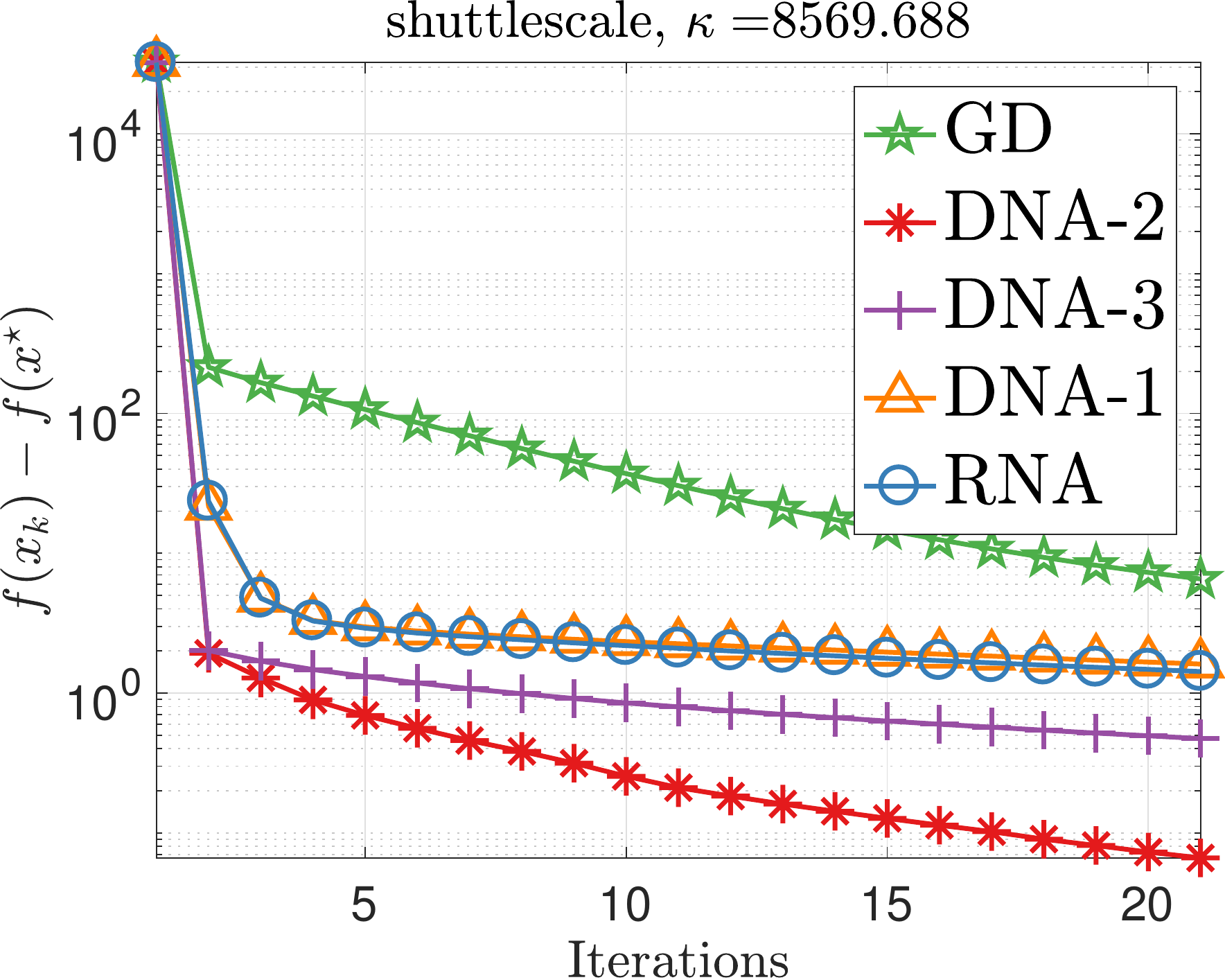}
		\end{minipage}
	\\
	\begin{minipage}{0.24\textwidth}
		\centering
		\includegraphics[width=\textwidth]{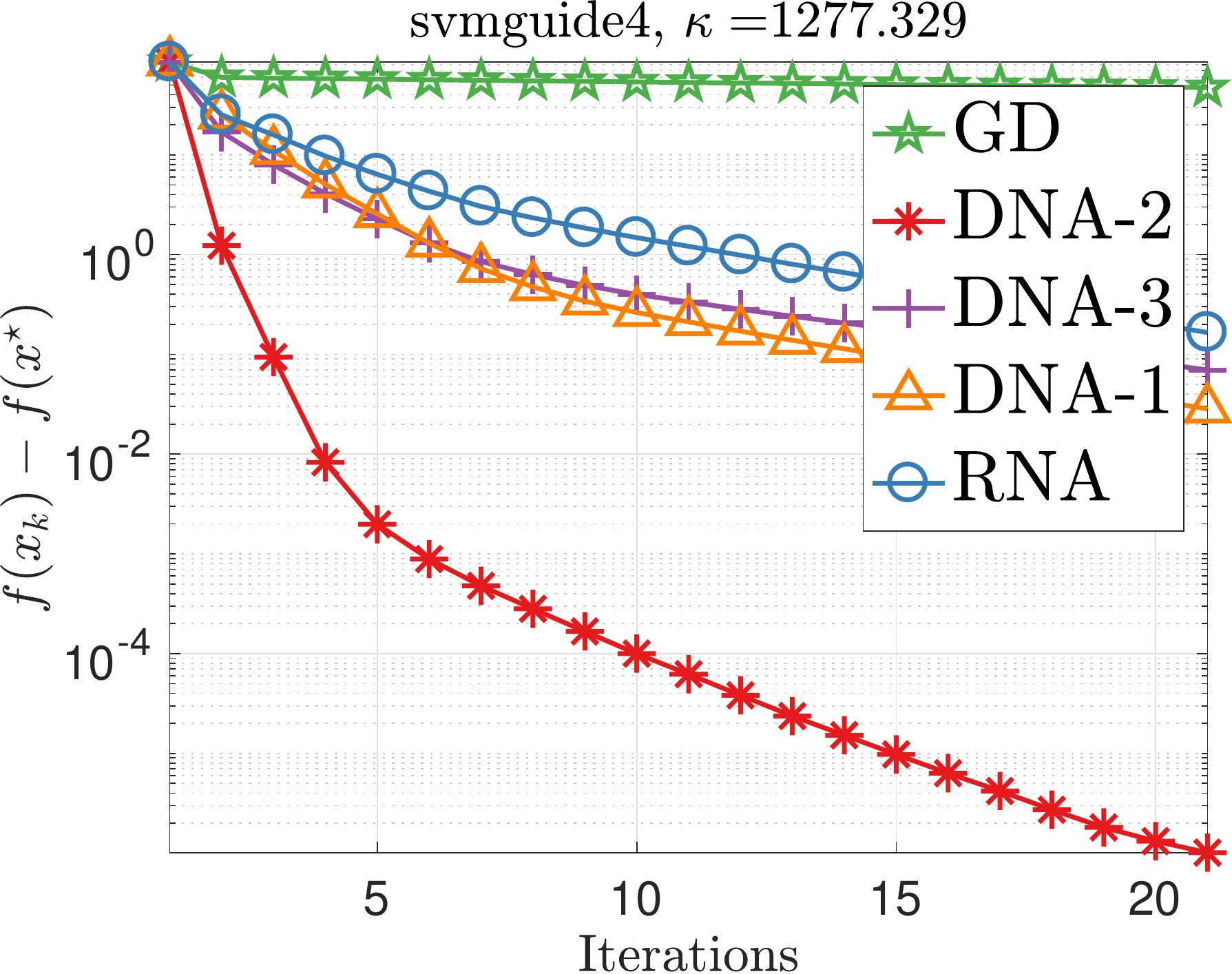}
	\end{minipage}
\begin{minipage}{0.24\textwidth}
		\centering
		\includegraphics[width=\textwidth]{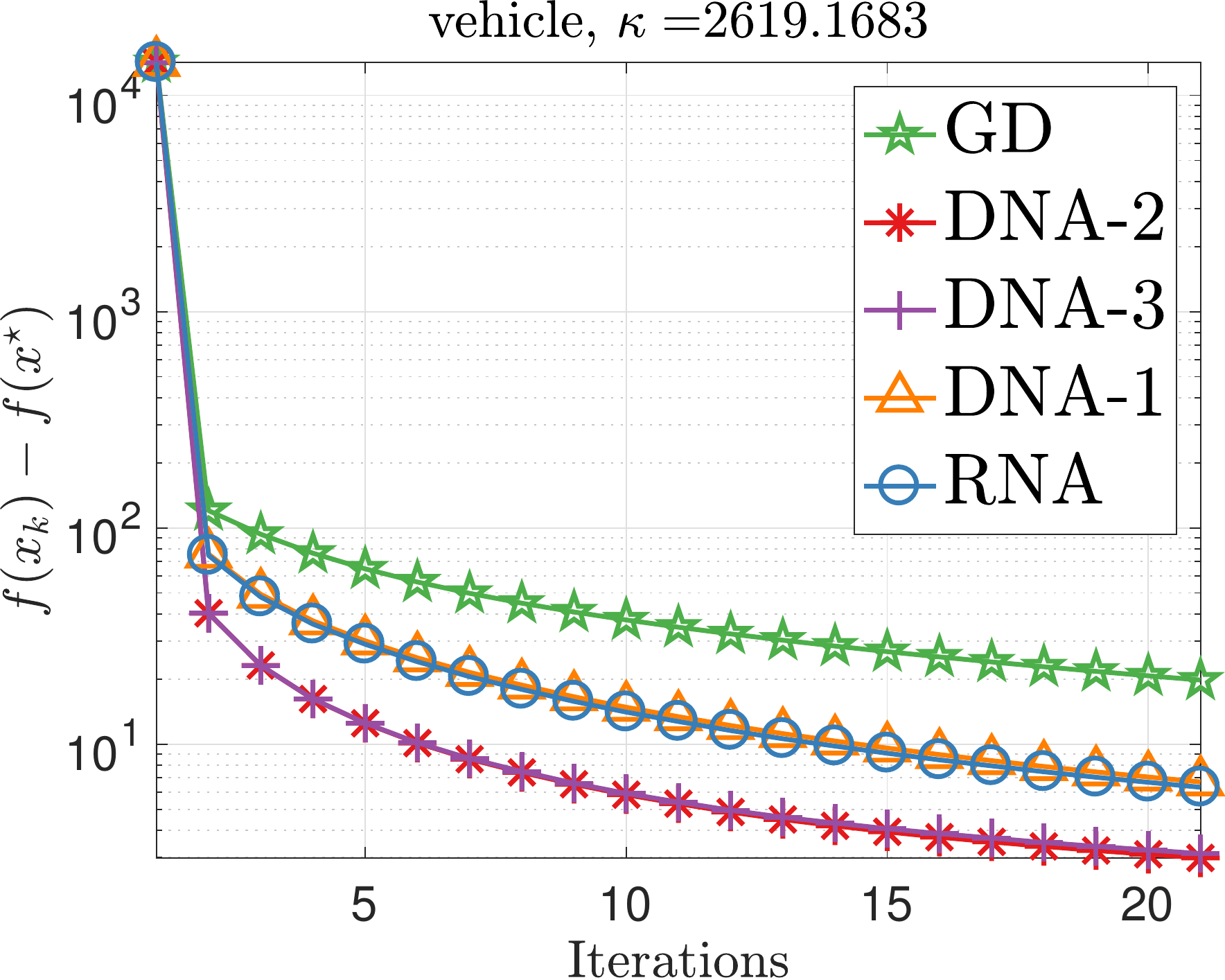}
		\end{minipage}
\begin{minipage}{0.24\textwidth}
		\centering
		\includegraphics[width=\textwidth]{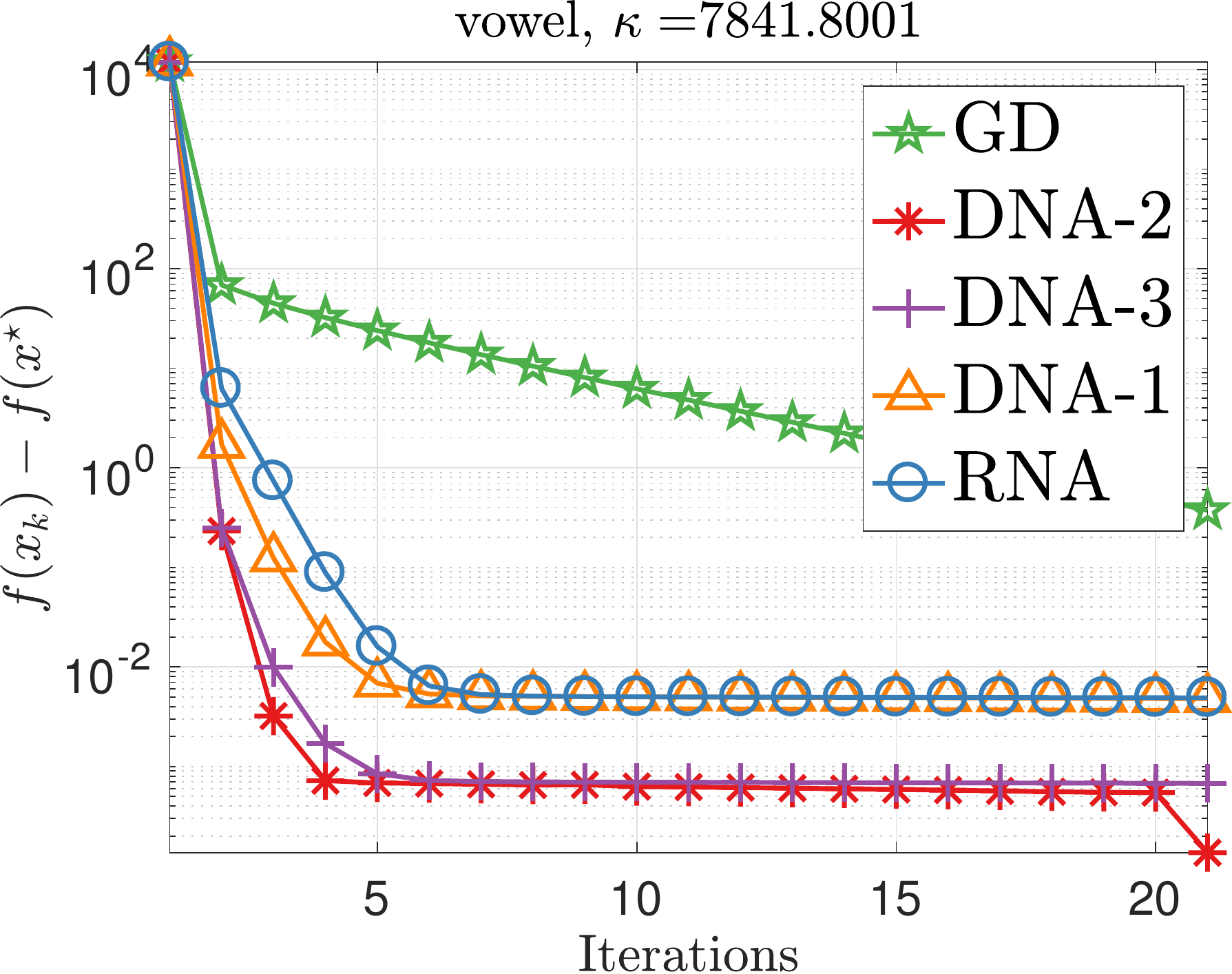}
		\end{minipage}
\begin{minipage}{0.24\textwidth}
		\centering
		\includegraphics[width=\textwidth]{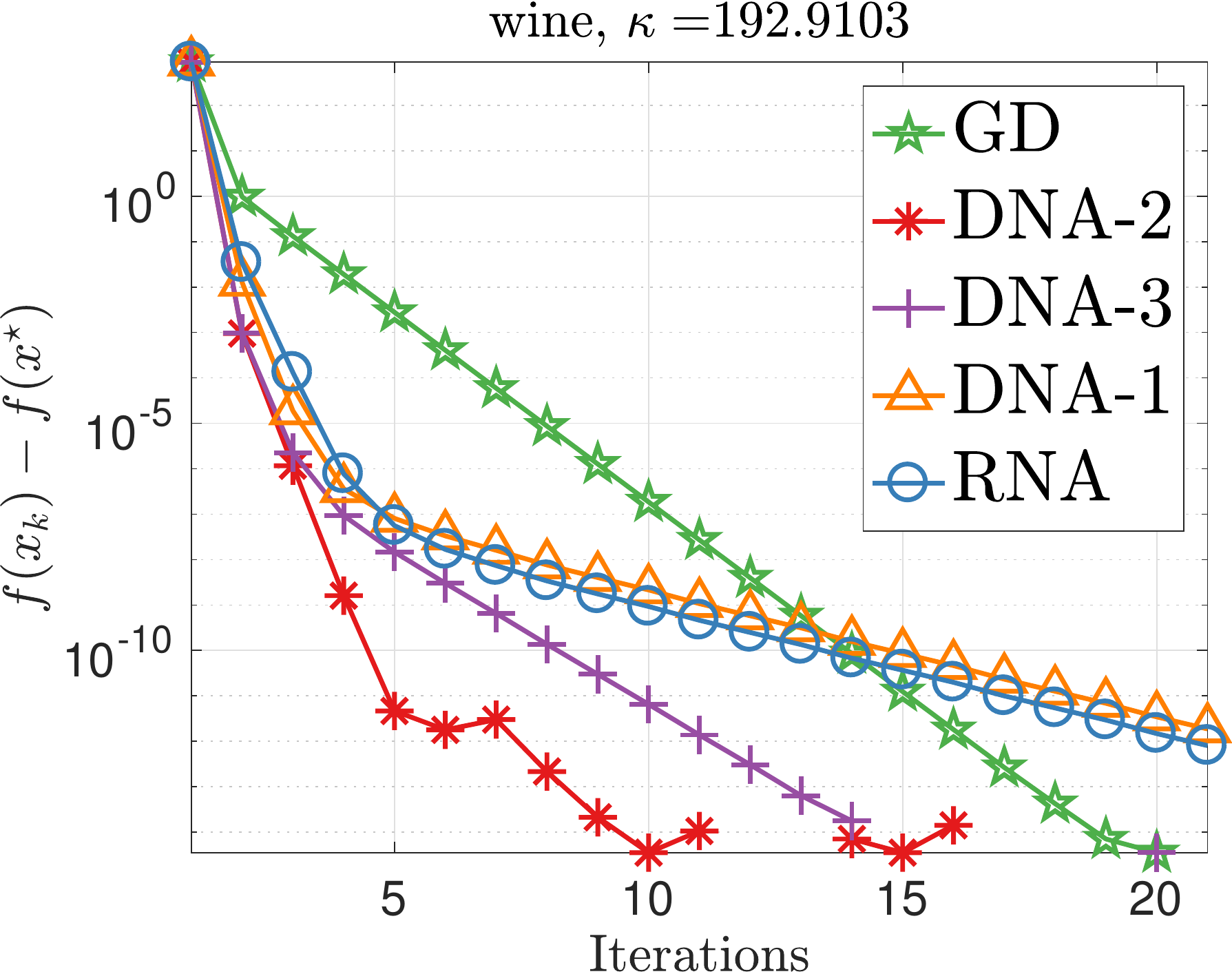}
		\end{minipage}
\caption{\small{Acceleration on {\tt LIBSVM} dataset by using online acceleration scheme in \citep{rna_16} on Ridge Regression problems. For all datasets, we use $k=3$. For RNA and DNA, we set $\lambda=10^{-8}$.}}\label{fig:realdata_rr_ol16}
\end{figure*}
 
Similar to RNA, if $X^\top \tilde{R}$ is singular then $c$ is not necessarily unique. Any $c$ of the form $\frac{z}{z^\top\mathbbm{1}}$, where $z$ is a solution of $X^\top\tilde{R}z = \mathbbm{1}$, is a solution 
 of (\ref{eq:krna}). DNA-1 is described in Alg~\ref{alg:kRNA}.
 \paragraph{Comparison with RNA on simple quadratic functions.}
 Denote the functional value obtained by DNA,  DNA-1 (Alg~\ref{alg:kRNA}) and RNA (Alg~\ref{alg:RNA}) at an extrapolated point as $f_{D}$, $f_{D1}$ and $f_R$, respectively. 
 \begin{proposition}\label{fn_value}
Let $A\in\R^{n\times n}$ be symmetric and positive definite and $f(x)=\frac{1}{2}x^\top Ax$ be a quadratic
objective function. Let $X=[x_0\;\;x_1\;\cdots x_k]$ be a matrix generated by stacking $k$ iterates of GD to minimize $f$. 
Then the functional value of DNA, DNA-1 and RNA at the accelerated point are: $f_D = 0$, 
$
\compactify{f_{\rm D1}=\frac{1}{2 \mathbbm{1}^\top (X^\top A X)^{-1} \mathbbm{1}}},
$
and  
$
\compactify{ f_{\rm R}=\frac{\mathbbm{1}^\top (X^\top A^2 X)^{-1}X^\top A X (X^\top A^2 X)^{-1}\mathbbm{1}}{2(\mathbbm{1}^\top (X^\top A^2X)^{-1} \mathbbm{1})^2}},
$
respectively. 
\end{proposition}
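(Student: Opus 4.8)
The plan is to reduce each of $f_D$, $f_{D1}$, $f_R$ to a closed form in $X$ and $A$ by exploiting the special structure of GD on a quadratic, and then read off the claimed expressions. First I would record the elementary identities valid for $f(x)=\tfrac12 x^\top A x$: here $\nabla f(x)=Ax$ and, crucially, $\nabla f(0)=0$; and the GD recursion $x_{i+1}=x_i-\alpha_i\nabla f(x_i)$ gives $\tfrac{x_i-x_{i+1}}{\alpha_i}=Ax_i$ for every $i$. Hence the matrices appearing in Algorithms~\ref{alg:RNA}, \ref{alg:fkRNA}, \ref{alg:kRNA} all collapse: $\tilde R=[Ax_0,\dots,Ax_K]=AX$, and $R=\tilde R-\nabla f(0)\mathbbm{1}^\top=AX$ too, so that $X^\top\tilde R=X^\top R=X^\top A X$, $\tilde R^\top\tilde R=X^\top A^2 X$, and $f(Xc)=\tfrac12 c^\top (X^\top A X)c$ for every $c$.

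For $f_D$: by Lemma~\ref{lemma:dna1} the DNA coefficient solves $X^\top R z=-X^\top\nabla f(0)=0$, i.e.\ $X^\top A X\,c=0$; multiplying by $c^\top$ and using $A\succ 0$ forces $Xc=0$, so $f(Xc)=f(0)=0$. (Equivalently, $\min_c \tfrac12 c^\top(X^\top A X)c=0$ because the Gram-type matrix $X^\top A X$ is positive semidefinite and $c=0$ already attains the value, which is exactly the unconstrained problem \eqref{eq:fkrna}.) This gives $f_D=0$.

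For $f_{D1}$: I would invoke Lemma~\ref{lemma:ckrna_lemma} — or, equivalently, solve the equality-constrained quadratic program \eqref{eq:krna} via a single Lagrange multiplier — to get $c=(X^\top A X)^{-1}\mathbbm{1}/\delta$ with $\delta=\mathbbm{1}^\top(X^\top A X)^{-1}\mathbbm{1}$. Substituting into $f(Xc)=\tfrac12 c^\top(X^\top A X)c$ and cancelling the factor $(X^\top A X)^{-1}(X^\top A X)=I$ leaves $f_{D1}=\frac{1}{2\mathbbm{1}^\top(X^\top A X)^{-1}\mathbbm{1}}$. For $f_R$: the closed form from Section~\ref{sec:rna} gives $c_R=(\tilde R^\top\tilde R)^{-1}\mathbbm{1}/(\mathbbm{1}^\top(\tilde R^\top\tilde R)^{-1}\mathbbm{1})=(X^\top A^2 X)^{-1}\mathbbm{1}/(\mathbbm{1}^\top(X^\top A^2 X)^{-1}\mathbbm{1})$; plugging this into $f(Xc_R)=\tfrac12 c_R^\top(X^\top A X)c_R$ and using symmetry of $(X^\top A^2 X)^{-1}$ yields the stated ratio directly, with no further simplification since $X^\top A X$ and $X^\top A^2 X$ need not commute.

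The computations are routine; the only points requiring care are the standing nonsingularity hypotheses — one needs $X$ of full column rank so that $X^\top A X$ and $X^\top A^2 X=\tilde R^\top\tilde R$ are invertible (otherwise the formulas are read with pseudoinverses, as in the remarks following Lemma~\ref{lemma:ckrna_lemma} and in Section~\ref{sec:rna}) — and the indexing/sign check that $\tilde R=R=AX$, which is precisely where the GD recursion and the hypothesis $f(x)=\tfrac12 x^\top A x$ (so the affine term $\nabla f(0)$ vanishes) are both used. I do not anticipate a genuine obstacle beyond this bookkeeping.
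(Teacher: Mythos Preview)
Your proposal is correct and follows essentially the same route as the paper: identify $\tilde R=R=AX$ via the GD relation and $\nabla f(0)=0$, write $f(Xc)=\tfrac12 c^\top(X^\top A X)c$, and substitute the closed-form coefficients for DNA, DNA-1 (Lemma~\ref{lemma:ckrna_lemma}), and RNA to obtain the stated expressions. Your treatment of $f_D$ is slightly more careful than the paper's (which simply records $c_D=0$), and your remark on the full-column-rank hypothesis for $X$ is apt.
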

We conclude that for this simple objective function, DNA reaches the optimal solution after the first acceleration. Moreover, one can choose the matrix $A$ such that $f_R$ is arbitrary large, and this example shows that DNA may outperform RNA by a large margin.~The comparison between DNA-1 and RNA on the previous example is given in the following lemma and theorem. 
\begin{figure*}
\centering
	\begin{minipage}{0.26\textwidth}
		\centering
		\includegraphics[width=\textwidth]{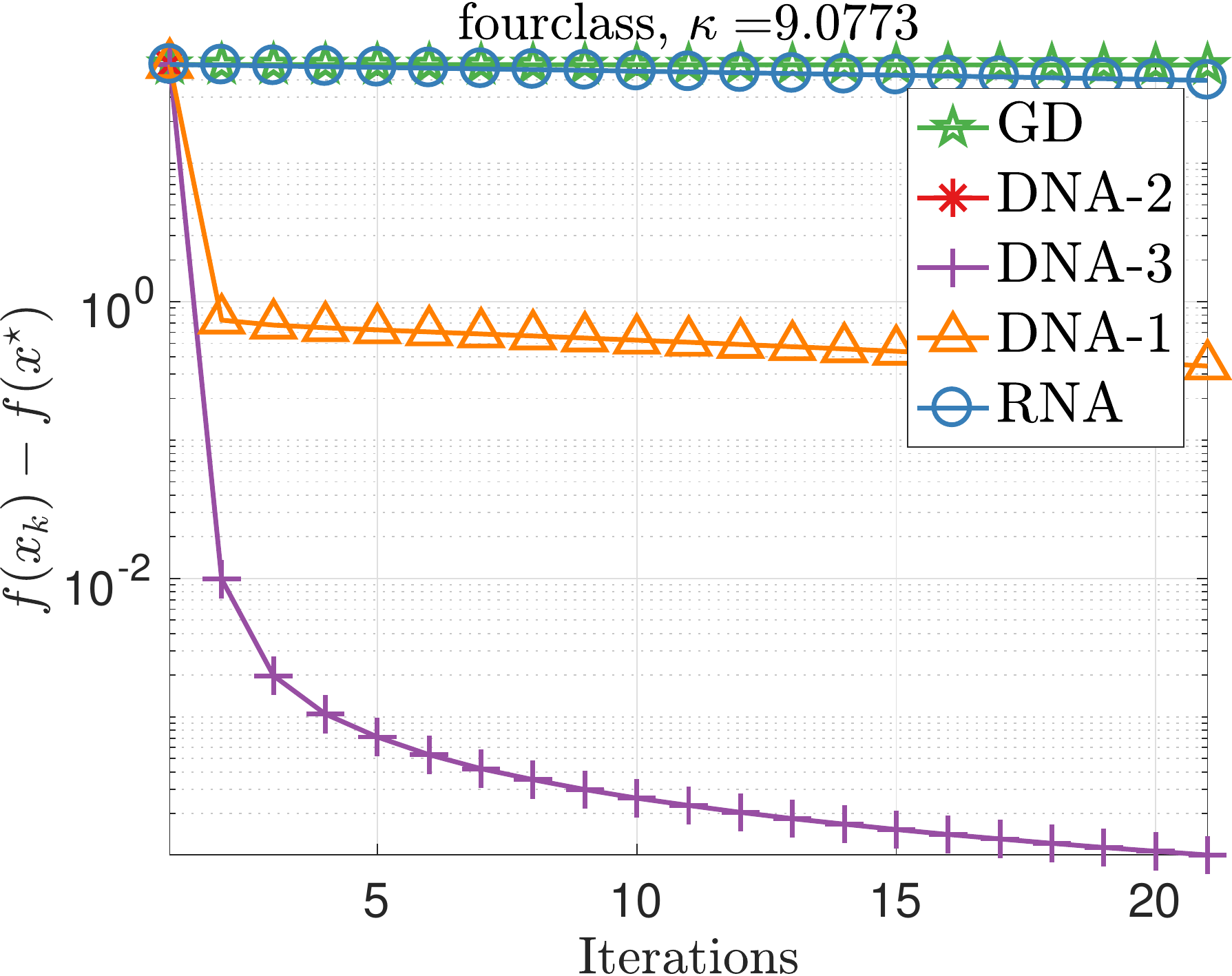}
		\end{minipage}
\begin{minipage}{0.26\textwidth}
		\centering
		\includegraphics[width=\textwidth]{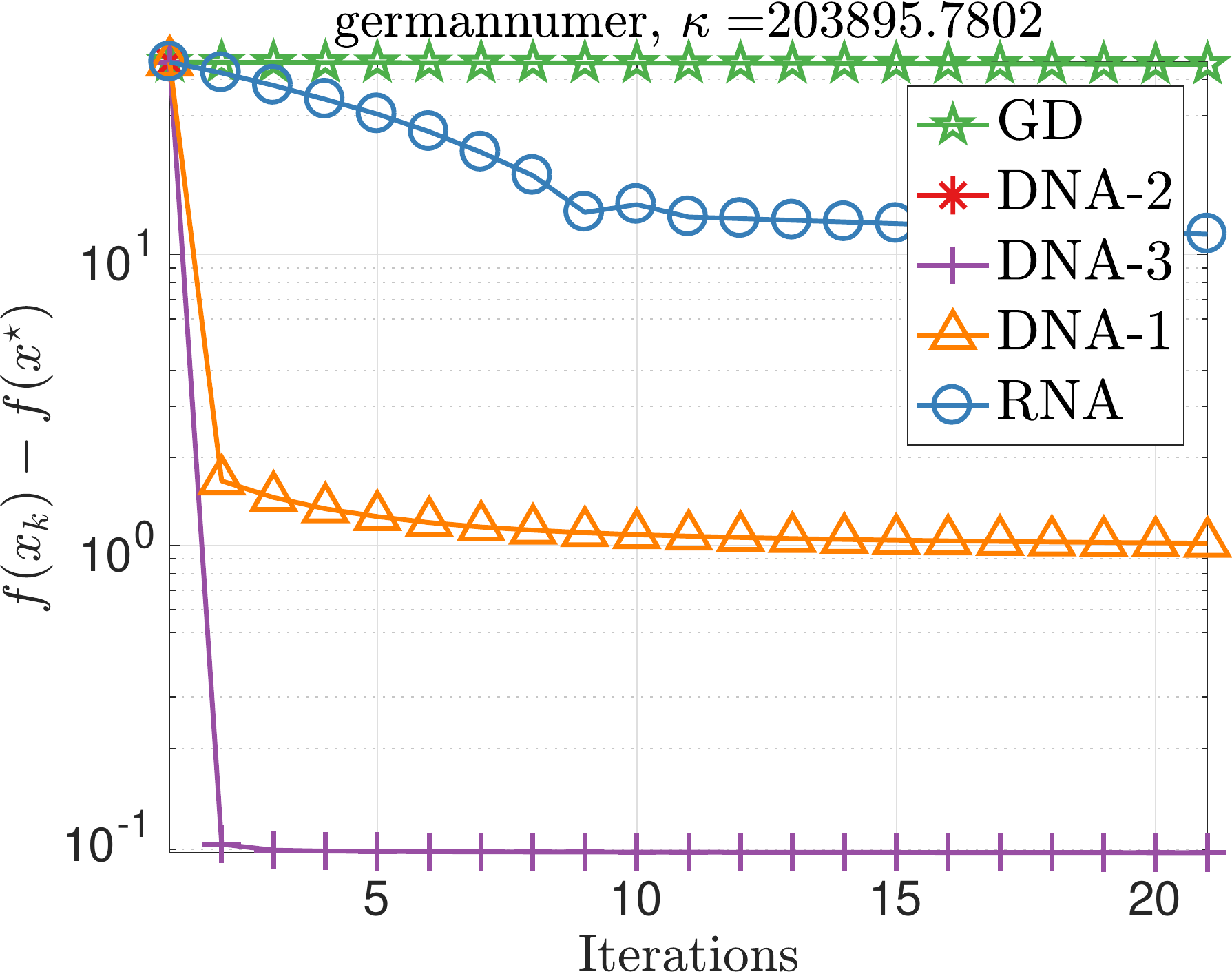}
		\end{minipage}
\begin{minipage}{0.26\textwidth}
		\centering
		\includegraphics[width=\textwidth]{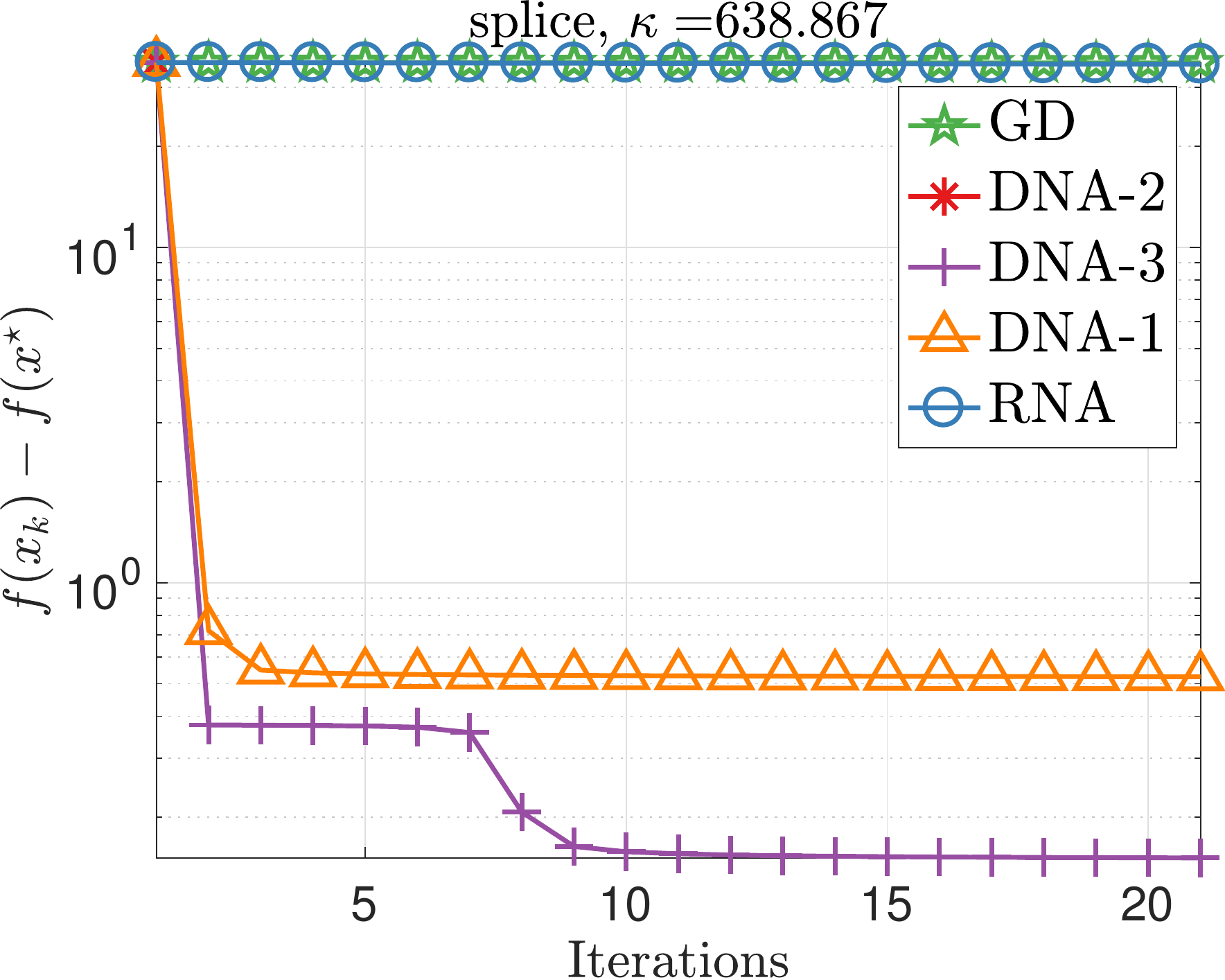}
		\end{minipage}
\caption{\small{Acceleration on {\tt LIBSVM} dataset by using online acceleration scheme in \citep{rna_16} on Logistic Regression problems. For all datasets, we use $k=3$. For RNA and DNA, we set $\lambda=10^{-8}$.}}\label{fig:realdata_lr_ol16}
\end{figure*}
 \begin{lemma}\label{lemma:ratio}
We assume that the matrix $\tilde{R}$ 
 has full column rank. 
With the notations used in Proposition \ref{fn_value}, we have $\compactify{\nicefrac{f_{{\rm R}}}{f_{{\rm D1}}}=\|z\|_{A^{-1}}^2\|y\|^2\|z\|^{-4},}$ where $\compactify{z\eqdef (\tilde{R}^\dagger)^\top \mathbbm{1} = ((AX)^\dagger)^\top \mathbbm{1}}$ and $\compactify{y\eqdef((A^{1/2}X)^\dagger)^\top\mathbbm{1}}$. 
 We have $y^\top A^{-1/2} z = z^\top z$; then, by using Cauchy-Schwarz inequality, we conclude that $\compactify{{\|z\|_{A^{-1}}\|y\|_2} \ge y^\top A^{-1/2} z= {\|z\|_2^2}}$
whence $\nicefrac{f_{\rm R}}{f_{\rm D1}} \ge 1$.
\end{lemma}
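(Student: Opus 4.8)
The plan is to reduce every quantity in the statement to an explicit matrix expression in $A$ and $X$. The key observation is that GD applied to the pure quadratic $f(x)=\tfrac12 x^\top Ax$ has $\nabla f(0)=0$, so the residual matrices of Algorithms~\ref{alg:RNA} and \ref{alg:fkRNA} both collapse to $\tilde R=R=AX$: the $i$-th column of $\tilde R$ is $\tfrac{x_i-x_{i+1}}{\alpha_i}=\nabla f(x_i)=Ax_i$. Since $\tilde R=AX$ is assumed to have full column rank and $A\succ 0$, the matrices $X$, $AX$, $A^{1/2}X$ all have full column rank, so $X^\top AX$ and $X^\top A^2X$ are invertible and the pseudoinverses involved are genuine left inverses: $\tilde R^\dagger=(X^\top A^2X)^{-1}X^\top A$ and $(A^{1/2}X)^\dagger=(X^\top AX)^{-1}X^\top A^{1/2}$. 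Transposing yields the closed forms $z=(\tilde R^\dagger)^\top\mathbbm{1}=AX(X^\top A^2X)^{-1}\mathbbm{1}$ and $y=((A^{1/2}X)^\dagger)^\top\mathbbm{1}=A^{1/2}X(X^\top AX)^{-1}\mathbbm{1}$.

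With these forms in hand, the first step is to establish the identity $f_{\rm R}/f_{\rm D1}=\|z\|_{A^{-1}}^2\|y\|^2\|z\|^{-4}$. Cancelling powers of $A$ gives $\|z\|_{A^{-1}}^2=\mathbbm{1}^\top(X^\top A^2X)^{-1}X^\top AX(X^\top A^2X)^{-1}\mathbbm{1}$, $\|z\|^2=\mathbbm{1}^\top(X^\top A^2X)^{-1}\mathbbm{1}$, and $\|y\|^2=\mathbbm{1}^\top(X^\top AX)^{-1}\mathbbm{1}$. Comparing with the formulas of Proposition~\ref{fn_value}, the first two identities say exactly that $2f_{\rm R}=\|z\|_{A^{-1}}^2/\|z\|^4$, while the third says $1/f_{\rm D1}=2\|y\|^2$; multiplying these two relations gives the claimed ratio.

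For the inequality, I would first verify the cross relation $y^\top A^{-1/2}z=z^\top z$ by the same cancellation: $y^\top A^{-1/2}z=\mathbbm{1}^\top(X^\top AX)^{-1}X^\top A^{1/2}A^{-1/2}AX(X^\top A^2X)^{-1}\mathbbm{1}=\mathbbm{1}^\top(X^\top A^2X)^{-1}\mathbbm{1}=z^\top z$. Viewing this as the inner product $\langle A^{-1/2}z,\,y\rangle$ and applying Cauchy--Schwarz together with $\|A^{-1/2}z\|=\|z\|_{A^{-1}}$ yields $\|z\|_{A^{-1}}\|y\|\ge z^\top z=\|z\|^2>0$; squaring and dividing by $\|z\|^4$ gives $f_{\rm R}/f_{\rm D1}\ge 1$.

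I do not expect any step to be a genuine obstacle: once $\tilde R=AX$ is recognised, the argument is routine linear algebra. The only points requiring care are the full-rank bookkeeping (confirming that the hypothesis on $\tilde R$ forces $X$, $AX$ and $A^{1/2}X$ to have full column rank, so that all Gram matrices above are invertible and all pseudoinverses are left inverses) and keeping track of the powers $A^{1/2},A,A^{-1/2},A^{-1}$ through the cancellations so that the intermediate Rayleigh-type quotients line up exactly with the expressions in Proposition~\ref{fn_value}.
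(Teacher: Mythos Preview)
Your proposal is correct and follows essentially the same route as the paper: recognise $\tilde R=AX$, use full column rank to write the pseudoinverses as left inverses, compute $\|z\|_{A^{-1}}^2$, $\|z\|^2$, $\|y\|^2$ explicitly and match them against the formulas of Proposition~\ref{fn_value}, then verify $y^\top A^{-1/2}z=z^\top z$ and apply Cauchy--Schwarz. The only cosmetic difference is that the paper packages the identity $y^\top A^{-1/2}z=z^\top z$ into a separate preliminary lemma (Lemma~\ref{pinv}) showing $(A^{1/2}X)^\dagger A^{-1/2}((AX)^\dagger)^\top=(X^\top A^2X)^{-1}$, whereas you obtain it by one line of direct cancellation; the underlying computation is identical.
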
 
Note that the ratio $\compactify{\frac{f_{\rm R}}{f_{\rm D1}} \ge 1}$ can be directly concluded from the definition of ${f_{\rm R}}$ and ${f_{\rm D1}}$. The main goal of the previous lemma is to exactly quantify the ratio between these two quantities. The following theorem gives more insight.
\begin{theorem}\label{lemma:UB}
We have $ \compactify{\frac{f_{{\rm R}}}{f_{{\rm D1}}}\le U_R \eqdef \|z\|_{A^{-1}}^2\|z\|_A^2 \|z\|^{-4},}$ and $U_R \in \left[\nicefrac{1}{2}+ \nicefrac{\kappa(A)}{2},\kappa(A)\right],$ where 
$\kappa(A)$ is the condition number of $A$. 
\end{theorem}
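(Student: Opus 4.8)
I work under the full-column-rank hypothesis of Lemma~\ref{lemma:ratio}, and recall that for $f(x)=\tfrac12x^\top Ax$ minimized by GD one has $\nabla f(0)=0$ and $(x_j-x_{j+1})/\alpha_j=Ax_j$, so that $\tilde R=AX$, $z=AX(X^\top A^2X)^{-1}\mathbbm{1}$ and $y=A^{1/2}X(X^\top AX)^{-1}\mathbbm{1}$. The statement splits into three pieces: $f_{\rm R}/f_{\rm D1}\le U_R$, the bound $U_R\le\kappa(A)$, and the bound $U_R\ge\tfrac12+\tfrac{\kappa(A)}{2}$.

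For the first piece, Lemma~\ref{lemma:ratio} already gives $f_{\rm R}/f_{\rm D1}=\|z\|_{A^{-1}}^2\|y\|^2\|z\|^{-4}$ while $U_R=\|z\|_{A^{-1}}^2\|z\|_A^2\|z\|^{-4}$, so after cancelling the common nonnegative factor $\|z\|_{A^{-1}}^2\|z\|^{-4}$ it suffices to prove the clean inequality $\|y\|^2\le\|z\|_A^2$. The plan is to set $P\eqdef A^{1/2}X$ and notice that $y=P(P^\top P)^{-1}\mathbbm{1}$ is the \emph{minimum Euclidean-norm} solution of the consistent linear system $P^\top w=\mathbbm{1}$ (indeed $P^\top y=\mathbbm{1}$ and $y$ lies in the column space of $P$), whereas $A^{1/2}z$ is merely \emph{some} solution of the same system, since $P^\top(A^{1/2}z)=X^\top Az=X^\top A\cdot AX(X^\top A^2X)^{-1}\mathbbm{1}=\mathbbm{1}$. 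Minimality of $\|y\|$ then yields $\|y\|\le\|A^{1/2}z\|=\|z\|_A$, which completes this piece.

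The bound $U_R\le\kappa(A)$ is the easy half: as $A$ is symmetric positive definite, $z^\top Az\le\lambda_{\rm max}(A)\,\|z\|^2$ and $z^\top A^{-1}z\le\lambda_{\rm min}(A)^{-1}\|z\|^2$, and multiplying the two gives $\|z\|_{A^{-1}}^2\|z\|_A^2\le\big(\lambda_{\rm max}(A)/\lambda_{\rm min}(A)\big)\|z\|^4=\kappa(A)\|z\|^4$.

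The remaining bound $U_R\ge\tfrac12+\tfrac{\kappa(A)}{2}$ is where I expect the real difficulty. Here $U_R=(z^\top Az)(z^\top A^{-1}z)/(z^\top z)^2$ is a Kantorovich-type ratio of quadratic forms in the \emph{particular} $z$ produced by GD, so the plan is to diagonalize $A=Q\,\diag(\lambda_i)\,Q^\top$, set $\hat z\eqdef Q^\top z$ with spectral weights $p_i\eqdef\hat z_i^2/\|z\|^2$, rewrite $U_R=\big(\sum_ip_i\lambda_i\big)\big(\sum_ip_i\lambda_i^{-1}\big)$, and then exploit the fact that the columns of $X$ — hence of $AX$, hence $z$ — lie in a Krylov subspace of $A$ built from $x_0$, which constrains the admissible weight vectors $p$ and in particular forces mass onto the extreme eigen-directions. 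Minimizing the product over the admissible $p$'s, I would expect the extremal configuration to be a two-eigenvalue quadratic with $z$ balanced between the smallest and largest eigenspaces, and the interplay of $z^\top z$, $z^\top Az$, $z^\top A^{-1}z$ in that case to pin down the endpoints $\tfrac12+\tfrac{\kappa(A)}{2}$ and $\kappa(A)$. The crux — and the step that most needs a careful independent check — is to characterize the realizable $z$ tightly enough to obtain the asserted \emph{lower} endpoint, rather than only the universal Cauchy--Schwarz bound $U_R\ge1$; the rest is routine linear algebra.
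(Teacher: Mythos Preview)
Your first two pieces --- the inequality $f_{\rm R}/f_{\rm D1}\le U_R$ via the minimum-norm characterization of $y$ (setting $P=A^{1/2}X$ and comparing the two solutions of $P^\top w=\mathbbm{1}$), and the upper bound $U_R\le\kappa(A)$ via the eigenvalue estimates $z^\top Az\le\lambda_{\max}(A)\|z\|^2$ and $z^\top A^{-1}z\le\lambda_{\min}(A)^{-1}\|z\|^2$ --- are correct and coincide exactly with the paper's argument.

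The third piece is where your plan diverges from the paper, and where there is a genuine gap. The paper does \emph{not} exploit any Krylov or GD structure of the algorithmic $z$ for the lower bound. Rather, in its proof it silently replaces the $U_R$ of the theorem statement (which is tied to the specific $z=((AX)^\dagger)^\top\mathbbm{1}$) by a supremum over all unit vectors, and then simply evaluates the Kantorovich product at the test vector with mass $1/2$ on each of the two extreme eigen-directions of $A$; this gives $\tfrac14(2+\kappa+1/\kappa)\ge\tfrac14(2+\kappa)$, which is the bound actually derived in the appendix (note the numerical discrepancy with the $\tfrac12+\tfrac{\kappa}{2}$ appearing in the statement).

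Your proposed route --- constraining $z$ through the Krylov structure of the GD iterates and minimizing the product $\big(\sum_ip_i\lambda_i\big)\big(\sum_ip_i\lambda_i^{-1}\big)$ over the admissible weights --- cannot deliver the asserted lower endpoint for the \emph{particular} algorithmic $z$, because that pointwise claim is false. Take $K=0$, so $X=x_0$ is a single column (the full-rank hypothesis is trivially met); then $z$ is proportional to $Ax_0$, and choosing $x_0$ to be an eigenvector of $A$ makes $z$ an eigenvector as well, whence $U_R=\|z\|_{A^{-1}}^2\|z\|_A^2/\|z\|^4=1$, strictly below $\tfrac12+\tfrac{\kappa}{2}$ whenever $\kappa>1$. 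Your own observation that the universal Cauchy--Schwarz bound yields only $U_R\ge1$ is therefore sharp; no Krylov argument can lift it. A secondary issue: the ``balanced two-eigenvalue'' configuration you anticipate as extremal is where the Kantorovich product is \emph{maximized}, not minimized, so the heuristic is pointing in the wrong direction. The remedy is to follow the paper and read the lower endpoint as a statement about the \emph{worst-case} (supremal) value of $U_R$ over all $z$, which is then established by a single test-vector evaluation --- no structural analysis of the GD iterates is needed.
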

 \begin{algorithm}
	\SetAlgoLined
 	\SetKwInOut{Input}{Input}
	\SetKwInOut{Output}{Output}
     \SetKwInOut{Init}{Initialize}
     \SetKwInOut{Compute}{Compute}
\Input{Sequence of iterates $x_0,\ldots,x_{K+1}$; sequence of step sizes $\alpha_0,\ldots,\alpha_{K}$; regularizer $\lambda>0$; and reference vector $y\in\R^{k+1}$\;}
	
		\nl Set $R =\left[\frac{x_0 - x_{1}}{\alpha_0} -\nabla f(0),\ldots,\frac{x_K - x_{K+1}}{\alpha_K} -\nabla f(0)  \right]$ and $X = [x_0, \ldots,x_K]$\;
		\nl Set $c$ as a solution of the  linear system
		$(X^\top R+\lambda X^\top X) z= \lambda X^\top y-X^\top \nabla f(0)$\;
	\Output{$x = \sum_{k = 0}^K c_k x_k$.}
 	\caption{ DNA-2 } \label{alg:DNA-2}
 \end{algorithm} 

The above theorem tells us, for a simple quadratic function, the ratio of the objective function values of DNA-1 and RNA may attain an order of $\kappa(A)$, but it never exceeds it. The theoretical quantification of the acceleration obtained by DNA and its different versions compared to RNA in more general problems is left for future work.   
Although DNA-1 can be seen as a regularized version of DNA, we still need to remedy the fact that the linearization of the gradient is not a {\em good} approximation in the entire space, and that the matrix $X^\top\tilde{R}$ may be singular. To this end, we impose some regularization such that the new extrapolated point {\em stays} near to some reference point. We propose two different ways in the following two sections.  
\subsection{DNA-2} 
We set  $g(c)=\|Xc-y\|^2$ in \eqref{mainDNA} and consider a regularized version of problem \eqref{eq:fkrna}:
\begin{eqnarray}\label{eq:fkrnareg-}
\compactify \min_{\substack{c\in\R^{K+1}}}  f\left(Xc \right)+\frac{\lambda}{2}\|Xc-y\|^2,
 \end{eqnarray}
 where $\lambda>0$ is a balancing parameter and $y$ is a reference point (a point supposed to be in the neighborhood of $Xc$). 
 By taking the derivative of the objective in \eqref{eq:fkrnareg-} with respect to $c$ and setting it to 0, we find 
$
\compactify{ X^\top\nabla f(Xc)+\lambda X^\top(Xc-y)=0},
$
which after using the approximation \eqref{approximate_grad} becomes
$
\compactify{ X^\top(Rc+\nabla f(0) )+\lambda X^\top(Xc-y)=0.}
$
Finally, $c^\star$ is given as a solution to the linear system 
\begin{eqnarray}\label{solution2}
\compactify (X^\top R+\lambda X^\top X) c=\lambda X^\top y-X^\top \nabla f(0).
\end{eqnarray}
In general, $X^\top R$ is not necessarily symmetric. To justify the regularization further, one might symmetrize $X^\top R$ by its transpose. In our experiments, we obtained good performance without this. 
\begin{figure*}
\centering
	\begin{subfigure}{0.3\textwidth}
		\centering
		\includegraphics[width=\textwidth]{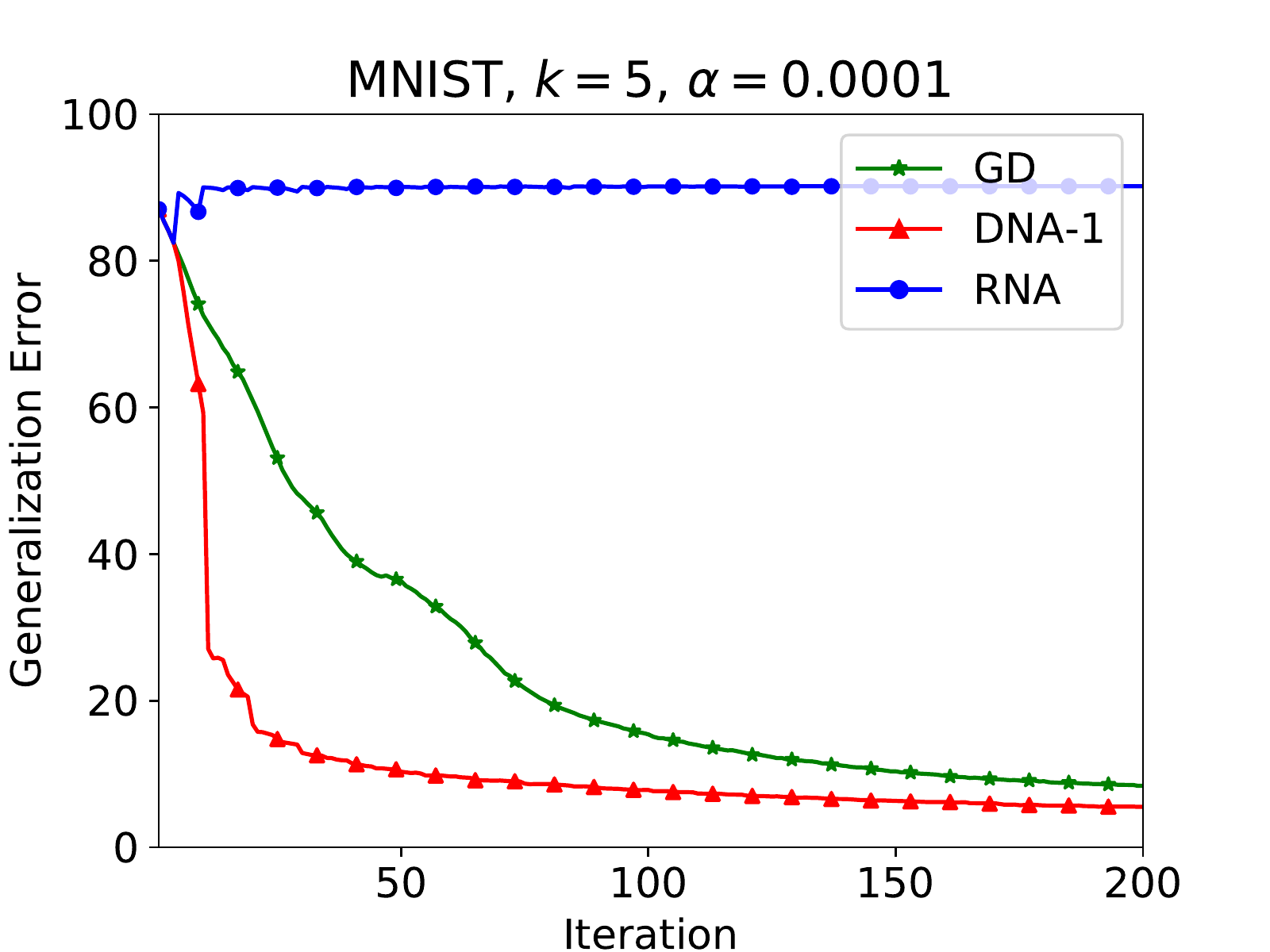}
				\caption{}
\end{subfigure}
\begin{subfigure}{0.3\textwidth}
		\centering
		\includegraphics[width=\textwidth]{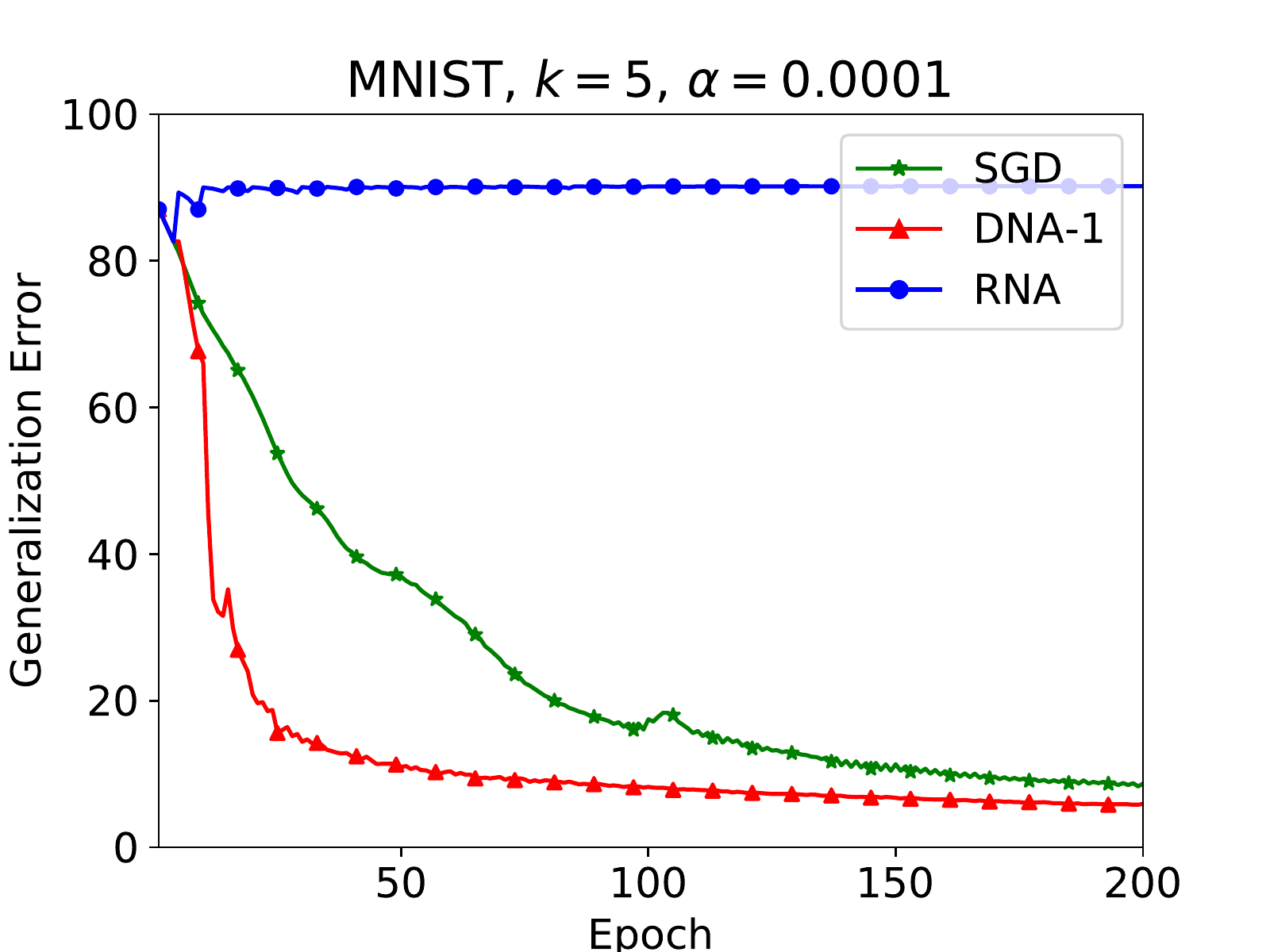}
				\caption{}
\end{subfigure}
\begin{subfigure}{0.3\textwidth}
		\centering
		\includegraphics[width=\textwidth]{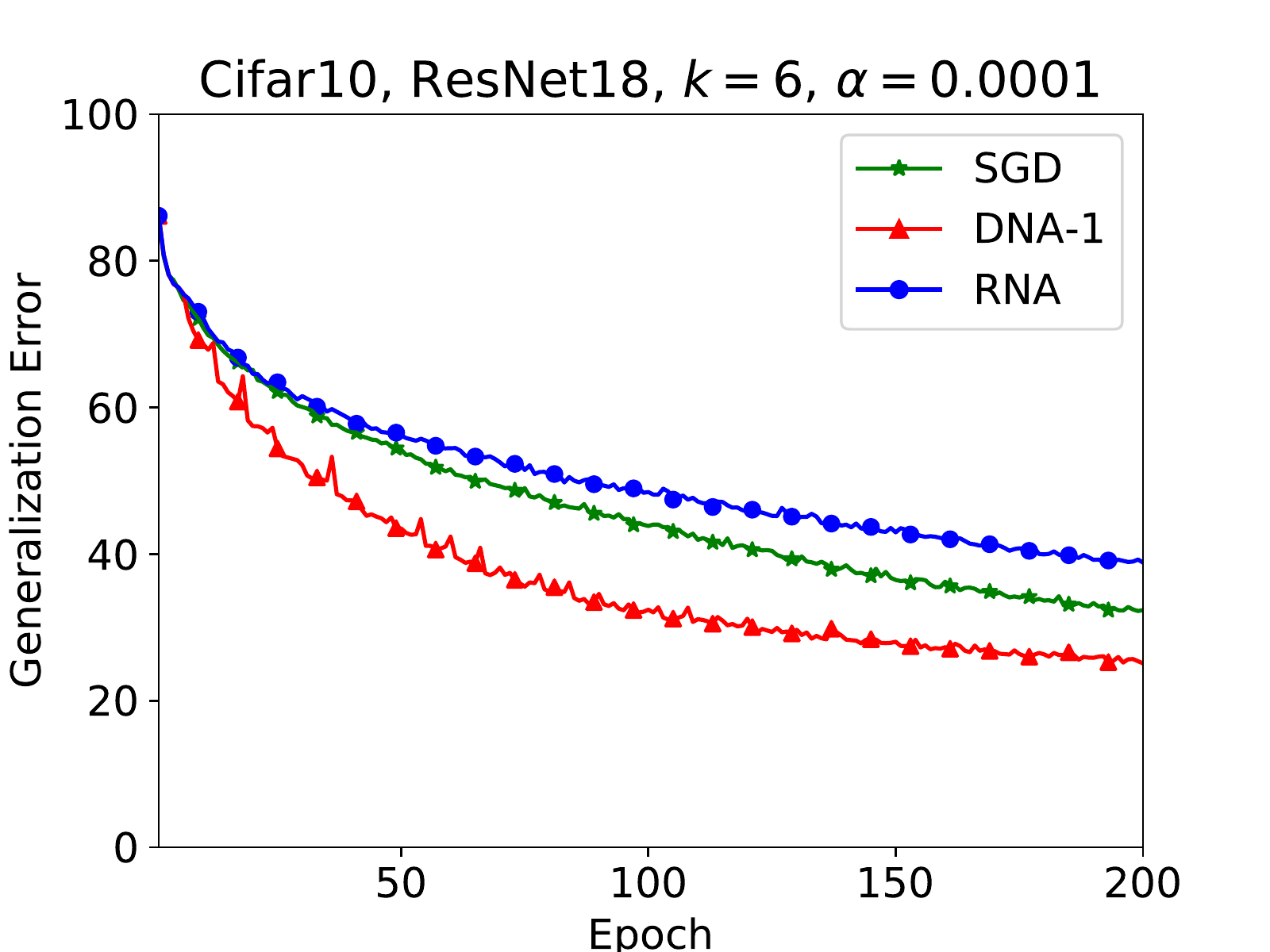}
		\caption{}
		\end{subfigure}
		\caption{\small{Accelerating neural-network training. (a) A 2-layer neural network with GD optimizer and fixed stepsize $0.0001$. (b) A 2-layer neural network with SGD optimizer and fixed stepsize $0.0001$. For both, we use $k=5$. (c) ResNet18 on CIFAR10 dataset with SGD optimizer and fixed stepsize $0.0001$. We use $k=6$. Note that these are not the best stepsize setting for the networks. Codebase {\tt Pytorch}. 
}}\label{fig:neu_net}
\end{figure*}

We call this  method DNA-2 (see Alg~\ref{alg:DNA-2}).
Note that $X^\top R+\lambda X^\top X$  can be singular, especially near the optimal solution. To remedy this, we propose either to add another regularization to the problem (\ref{eq:fkrnareg-}), or to consider a direct regularization on $c$ instead of $Xc$. We explain this next.

\subsection{DNA-3}
We set  $g(c)=\|c-e\|^2$ in \eqref{mainDNA} and consider a regularized version of  \eqref{eq:fkrna} as
\begin{eqnarray}\label{eq:fkrnareg}
\compactify \min_{\substack{c\in\R^{K+1}}}  f\left(Xc \right)+\frac{\lambda}{2}\|c-e\|^2,
 \end{eqnarray}
 where $\lambda>0$ and $e$ is a reference point for $c$.  By taking the derivative with respect to $c$ and setting it to 0, we find 
$
\compactify{ X^\top\nabla f(Xc)+\lambda (c-e)=0},
$
which after using the approximation \eqref{approximate_grad} becomes
$
 \compactify{X^\top(Rc+\nabla f(0))+\lambda X^\top(c-e)=0.} 
$
Therefore, $c^\star$ is given as a solution to the linear system: 
$
\compactify{(X^\top R+\lambda I)c=\lambda  e-X^\top \nabla f(0).}
$
We call this method DNA-3, and describe it in Alg~\ref{alg:DNA-3}.
\begin{algorithm}
	\SetAlgoLined
 	\SetKwInOut{Input}{Input}
	\SetKwInOut{Output}{Output}
     \SetKwInOut{Init}{Initialize}
     \SetKwInOut{Compute}{Compute}
\Input{Sequence of iterates $x_0,\ldots,x_{K+1}$; sequence of step sizes $\alpha_0,\ldots,\alpha_{K}$; regularizer $\lambda>0$; and $e\in\R^{k+1}$\;}
	
		\nl Set $R =\left[\frac{x_0 - x_{1}}{\alpha_0} -\nabla f(0),\ldots,\frac{x_K - x_{K+1}}{\alpha_K} -\nabla f(0)  \right]$ and $X = [x_0, \ldots,x_K]$\;
		\nl Set $c$ as a solution of the  linear system
		$(X^\top R+\lambda I) z = \lambda  e-X^\top \nabla f(0)$\;
	\Output{$x = \sum_{k = 0}^K c_k x_k$.}
 	\caption{ DNA-3 } \label{alg:DNA-3}
 \end{algorithm} 
\section{Numerical illustration}

We evaluate our techniques and compare against RNA and GD 
by using both synthetic data as well as real-world datasets. Overall, we find that DNA outperforms RNA in most settings by large margins. 

{\em Experimental setup.}
Our experimental setup comprises of 3 typical problems, least squares, ridge regression, and logistic regression, for which the optimal solution $x^\star$ is either known or can be evaluated using a numerical solver. We apply the online acceleration scheme in \citep{rna_16} and compare 3 versions of DNA against RNA and GD.
Our results show the difference between the functional values at the extrapolated point and at the optimal solution on a logarithmic scale (the lower the better), as the iterations progress.
The primary objective of our simulations is to show the effectiveness of DNA and its different versions to accelerate a converging, deterministic optimization algorithm. Therefore, we do not report any computation time of the algorithms and we do not claim these implementations are optimized. Note that the computation bottleneck of all algorithms (including RNA) is solving the linear system to calculate $c$, and because the dimensionality of the linear systems is the same in RNA and DNA, the extra cost is the same in both approaches. In our experiments,
we consider a fixed stepsize $\alpha_k=1/L$ for GD, where $L$ is the Lipschitz constant of $\nabla f$. 
We note that for DNA-1 and 2 we need to use the stepsize explicitly to construct $R$ as defined in Lem~\ref{lemma:dna1}. 

{\em Least Squares.}
We consider a least squares regression problem of the form
\begin{eqnarray}\label{ls}
\compactify \min_x f(x)\eqdef \frac{1}{2}\|Ax-y\|^2
\end{eqnarray}
where $A\in\mathbb{R}^{m\times n}$ with $m>n$ is the data matrix, $y\in\mathbb{R}^m$ is the response vector. For $m>n$ and ${\rm rank}(A)=n$, the objective function $f$ in \eqref{ls} is strongly convex. The optimal solution $x^\star$ to \eqref{ls} is given by
$
\compactify{x^\star=\argmin_x f(x)=(A^\top A)^{-1}A^\top y.}
$
For least squares we only consider the overdetermined systems, that is, $m>n$. 

{\em Ridge Regression.}
The classic ridge regression problem is of the form:
\begin{eqnarray}\label{ridge}
\compactify \min_x f(x)\eqdef \frac{1}{2}\|Ax-y\|^2+\frac{1}{2n}\|x\|^2,
\end{eqnarray}
where $A\in\mathbb{R}^{m\times n}$ is the data matrix, $y\in\mathbb{R}^n$ is the response vector.~The optimal solution $x^\star$ to \eqref{ridge} is given by
$
x^\star=\argmin_x f(x)=(A^\top A+\tfrac{1}{2n} I)^{-1}A^\top y.
$

{\em Logistic Regression.}
In logistic regression with $\ell_2$ regularization, the objective function $f(x)$ is the summation of $n$ loss function  of the form:
\begin{eqnarray}\label{lr}
\compactify f_i(x) = {\rm log}(1 +{\rm  exp}(-y_i\langle A(:,i), x \rangle)+\frac{1}{2m}\|x\|^2.
\end{eqnarray}
 We use the MATLAB function {\tt fminunc} to numerically obtain the minimizer of $f$ in this case.

{\em Synthetic Data.}
To compare the performance of different methods under different acceleration schemes, we are interested in the case where matrix $A$ has a known singular value distribution and we consider the cases where $A$ has varying condition numbers. We note that the condition number of $A$ is defined as $\kappa(A)\eqdef\nicefrac{\lambda_{\max}(A)}{\lambda_{\min}(A)}$, where $\lambda$ is the eigenvalue of $A$. We first generate a  random matrix and let $U\Sigma V^\top$ be its SVD. Next we create a vector $S\in\R^{\min\{m,n\}}$ with entries $s_i\in\R^+$ arranged in an nonincreasing order such that $s_1$ is maximum and $s_{\min\{m,n\}}$ is minimum. Finally, we form the test matrix $A$ as $A=U{\rm diag}(s_1\;\;s_2\;\cdots s_{\min\{m,n\}})V^\top$ such that $A$ will have a higher condition number  if $\nicefrac{s_1}{s_{\min\{m,n\}}}$ is large and smaller condition number if $\nicefrac{s_1}{s_{\min\{m,n\}}}$ is small. We create the vector $y$ as a random vector. 

{\em Real Data.} We use 15 different real-world datasets from the {\tt LIBSVM} repository \citep{libsvm}. 
We set apart the datasets with $y$-labels as $\{-1,1\}$ for Logistic regression and used the remaining 12 multi-label datasets for least squares and ridge regression problems. We use the matrix $A$ in its crude form, that is, without any scaling/normalizing or centralizing its rows or columns. 

{\em Acceleration Results.} We use GD as our baseline algorithm and the online acceleration scheme as explained in \citep{rna_16} for both RNA and DNAs to accelerate the GD iterates. 
For synthetic data (see Fig~\ref{fig:synthetic_ol16}), we see that for smaller condition numbers and for quadratic objective functions, DNA-1 and RNA has almost similar performance and DNA-2 and 3 show faster decrease of $f(x_k)-f(x^\star)$, but all of them are very competitive. As the condition number of the problems becomes huge, DNA-2 and 3 outperform RNA by large margins. However, for logistic regression problems we see performance gains for all versions of DNA compared to RNA. Though for huge condition numbers, for logistic regression problems, the performance of DNA-2 depends on the hyperparameter $\lambda$. We argue with experimental evidence as in Fig~\ref{fig:synthetic_ol16} that for huge condition numbers the sensitivity of the performance of DNA-2 is problem specific. We use an additional regularizer $\epsilon \|c\|^2$, where $\epsilon\approx 10^{-14}$ to find a stable solution to \eqref{solution2} of DNA-2.~Next, on real-world datasets in Figs~ \ref{fig:real_ls_ol16} and \ref{fig:realdata_rr_ol16}, we see that all versions of DNA outperform RNA, except in a few cases, where RNA and DNA-1 have almost similar performance. We indicate the oscillating nature of DNA-2 in some plots is due to its problem-specific sensitivity to the regularizer. In Fig~\ref{fig:realdata_lr_ol16}, we find for logistic regression problems on real datasets, DNA outperfoms RNA. We owe the success of DNA on non-quadratic problems to its adaptive gradient approximation. We note that the performance of all algorithms on the offline scheme of \citep{rna_16} are similar to online scheme of \citep{rna_16}. However, on the second online scheme used in \citep{rna_18}, all the algorithms perform extremely poorly. Therefore, we do not report the results in this paper. 

{\bf Application to the non-convex world: Accelerating neural network training.}
Modern deep learning requires optimization algorithms to work in a nonconvex setup. Although this is not the main goal of this paper, nevertheless, we implement our acceleration techniques for training neural networks and obtain surprisingly promising results.
We only use DNA-1 for experiments in this section. Tuning the hyperparameter $\lambda$ for the other versions of DNAs requires more time, and we leave this for future research. The {\tt Pytorch} implementation of RNA is based on \citep{rna_18}. Finally, see Fig~7
in Appendix for more results. 

{\em MNIST Classification.}
First, we trained a simple two-layer neural network classifier on MNIST dataset \citep{mnist} via GD and accelerate the GD iterates via the online scheme in \citep{rna_16} for both RNA and DNA-1. The two-layer neural network is wildely adopted in most tutorials that use MNIST dataset \footnote{https://github.com/pytorch/examples/blob/master/mnist/main.py}. In Fig~\ref{fig:neu_net} (a), DNA-1 gains acceleration by using GD iterates with a window size $k=5$. However, RNA fails to accelerate the GD iterates. This motivated us to train the same network on MNIST dataset classification \citep{mnist} via SGD as baseline algorithm and accelerate the SGD iterates via the online scheme in \citep{rna_16} for both RNA and DNA-1 (as in Fig~\ref{fig:neu_net} (b)). Again, with window size $k=5$, DNA-1 achieves better acceleration than RNA.  

{\em ResNet18 on CIFAR10.}
 Finally, we train the ResNet18 network \citep{resnet18} on CIFAR10 dataset \citep{cifar10} by SGD. Each epoch of SGD consists of multiple iterations and each iteration applies to $128$ training samples. The size of the training set is $5\times 10^4$ and the size of validation set is $10^4$. Each sample is a $32\times 32$ resolution color image and they are categorized into 10 classes. We accelerate the SGD iterates via the online scheme in \citep{rna_16} for both RNA and DNA-1. Again DNA-1 outperforms RNA in lowering the generalization error of the network (see Fig~\ref{fig:neu_net} (c)).


\bibliographystyle{plainnat}
{
	\bibliography{references}}

\begin{thebibliography}{26}
\providecommand{\natexlab}[1]{#1}
\providecommand{\url}[1]{\texttt{#1}}
\expandafter\ifx\csname urlstyle\endcsname\relax
  \providecommand{\doi}[1]{doi: #1}\else
  \providecommand{\doi}{doi: \begingroup \urlstyle{rm}\Url}\fi

\bibitem[Aitken(1927)]{aitken}
A.~C. Aitken.
\newblock On \uppercase{B}ernoulli’s numerical solution of algebraic
  equations.
\newblock \emph{Proceedings of the Royal Society of Edinburgh}, 46:\penalty0
  289--305, 1927.

\bibitem[Anderson(1965)]{Anderson}
D.~G. Anderson.
\newblock Iterative procedures for nonlinear integral equations.
\newblock \emph{Journal of the ACM}, 12\penalty0 (4):\penalty0 547--560, 1965.

\bibitem[Beck and Teboulle(2009)]{fista}
A.~Beck and M.~Teboulle.
\newblock A fast iterative shrinkage thresholding algorithm for linear inverse
  problems.
\newblock \emph{SIAM Journal of Imaging Sciences}, 2\penalty0 (1):\penalty0
  183--202, 2009.

\bibitem[Brezinski et~al.(2018)Brezinski, Redivo-Zaglia, and Saad]{brezinski}
C.~Brezinski, M.~Redivo-Zaglia, and Y.~Saad.
\newblock Shanks sequence transformations and \uppercase{A}nderson
  acceleration.
\newblock \emph{SIAM Review}, 60\penalty0 (3):\penalty0 646--669, 2018.

\bibitem[Bubeck et~al.(2015)Bubeck, Lee, and Singh]{BubeckLS15}
Sébastien Bubeck, Yin~Tat Lee, and Mohit Singh.
\newblock A geometric alternative to \uppercase{N}esterov's accelerated
  gradient descent.
\newblock \emph{CoRR}, abs/1506.08187, 2015.

\bibitem[Cabay and Jackson(1976)]{ampe}
S.~Cabay and L.~W. Jackson.
\newblock A polynomial extrapolation method for finding limits and antilimits
  of vector sequences.
\newblock \emph{SIAM Journal on Numerical Analysis}, 13\penalty0 (5):\penalty0
  734--752, 1976.

\bibitem[Chang and Lin(2011)]{libsvm}
C.~C. Chang and C.~J. Lin.
\newblock \uppercase{LIBSVM}: a library for support vector machines.
\newblock \emph{ACM Transactions on Intelligent Systems and Technology}, 2011.

\bibitem[He et~al.(2016)He, Zhang, Ren, and Sun]{resnet18}
K.~He, X.~Zhang, S.~Ren, and J.~Sun.
\newblock Deep residual learning for image recognition.
\newblock In \emph{2016 IEEE Conference on Computer Vision and Pattern
  Recognition (CVPR)}, pages 770--778, 2016.

\bibitem[Kelley(2018)]{kelley}
C.~T. Kelley.
\newblock Numerical methods for nonlinear equations.
\newblock \emph{Acta Numerica}, 27:\penalty0 207--287, 2018.

\bibitem[Krizhevsky and Hinton(2009)]{cifar10}
A.~Krizhevsky and G.~Hinton.
\newblock Learning multiple layers of features from tiny images.
\newblock \emph{Technical report, University of Toronto}, 1\penalty0 (4), 2009.

\bibitem[LeCun et~al.(2010)LeCun, Cortes, and Burges]{mnist}
Y.~LeCun, C.~Cortes, and C.~JC Burges.
\newblock \uppercase{MNIST} handwritten digit database. 2010.
\newblock \emph{http://yann.lecun.com/exdb/mnist}, 2010.

\bibitem[Lin et~al.(2015)Lin, Mairal, and Harchaoui]{lin_katalyst}
H.~Lin, J.~Mairal, and Z.~Harchaoui.
\newblock A universal catalyst for first-order optimization.
\newblock In \emph{Proceedings of Neural Information Processing Systems}, pages
  3384--3392, 2015.

\bibitem[Nesterov(1983)]{nesterov}
Y.~Nesterov.
\newblock A method of solving a convex programming problem with convergence
  rate $o(1/k^2)$.
\newblock \emph{Soviet Mathematics Doklady}, 27\penalty0 (2):\penalty0
  372--376, 1983.

\bibitem[Nesterov(2007)]{nesterov2007}
Y.~Nesterov.
\newblock Gradient methods for minimizing composite objective function, 2007.
\newblock CORE Discussion Papers.

\bibitem[Nesterov(2013)]{Nesterov2013}
Y.~Nesterov.
\newblock Gradient methods for minimizing composite functions.
\newblock \emph{Mathematical Programming}, 140\penalty0 (1):\penalty0 125--161,
  2013.

\bibitem[Polyak(1964)]{polyak}
B.~Polyak.
\newblock Some methods of speeding up the convergence of iteration methods.
\newblock \emph{USSR Computational Mathematics and Mathematical Physics},
  4\penalty0 (5):\penalty0 1--17, 1964.

\bibitem[Riseth(2019)]{riseth}
A.~N. Riseth.
\newblock Objective acceleration for unconstrained optimization.
\newblock \emph{Numerical Linear Algebra with Applications}, 26\penalty0
  (1):\penalty0 e2216, 2019.

\bibitem[Scieur et~al.(2016)Scieur, d'Aspremont, and Bach]{rna_16}
D.~Scieur, A.~d'Aspremont, and F.~Bach.
\newblock Regularized nonlinear acceleration.
\newblock In \emph{Proceedings of Neural Information Processing Systems}, pages
  712--720, 2016.

\bibitem[Scieur et~al.(2018)Scieur, Oyallon, d'Aspremont, and Bach]{rna_18}
D.~Scieur, E.~Oyallon, A.~d'Aspremont, and F.~Bach.
\newblock Nonlinear acceleration of deep neural networks, 2018.
\newblock arXiv:1805.09639.

\bibitem[Shanks(1955)]{shanks}
D.~Shanks.
\newblock Non-linear transformations of divergent and slowly convergent
  sequences.
\newblock \emph{Journal of Mathematics and Physics}, 34\penalty0 (1):\penalty0
  1--42, 1955.

\bibitem[Su et~al.(2014)Su, Boyd, and Cand\'{e}s]{ode_boyd}
W.~Su, S.~Boyd, and E.~Cand\'{e}s.
\newblock A differential equation for modeling \uppercase{N}esterov's
  accelerated gradient method: Theory and insights.
\newblock In \emph{Proceedings of Neural Information Processing Systems}, pages
  2510--2518, 2014.

\bibitem[Toth and Kelley(2015)]{tothkelley}
A.~Toth and C.~T. Kelley.
\newblock Convergence analysis for anderson's acceleration.
\newblock \emph{SIAM Journal on Numerical Analysis}, 53\penalty0 (2):\penalty0
  805--819, 2015.

\bibitem[Walker and Ni(2011)]{walker}
H.~F. Walker and P.~Ni.
\newblock Anderson acceleration for fixed point iteration.
\newblock \emph{SIAM Journal on Numerical Analysis}, 49\penalty0 (4):\penalty0
  1715--1735, 2011.

\bibitem[Wynn(1956)]{wynn}
P.~Wynn.
\newblock On a device for computing the $e_m(s_n)$ transformation.
\newblock \emph{Mathematical Tables and Other Aids to Computation}, 10\penalty0
  (54):\penalty0 91--96, 1956.

\bibitem[Zhang et~al.(2018)Zhang, O'Donoghue, and Boyd]{zhang2018globally}
J.~Zhang, B.~O'Donoghue, and S.. Boyd.
\newblock Globally convergent type-i anderson acceleration for non-smooth
  fixed-point iterations.
\newblock \emph{arXiv:1808.03971}, 2018.

\bibitem[Zhu and Orecchia(2017)]{Zhu2017LinearCA}
Z.~Allen Zhu and L.~Orecchia.
\newblock Linear coupling: An ultimate unification of gradient and mirror
  descent.
\newblock In \emph{ITCS}, 2017.

\end{thebibliography}



\clearpage
\appendix

\part*{Appendix}

\section{Anderson's Acceleration~\citep{Anderson}}\label{sec:anderson}

There are several acceleration techniques that have been proposed in the literature and they pose a lot of similarities. We quote the authors from \citep{brezinski} -- ``Methods for accelerating the convergence of various processes have been developed by researchers in a wide range of disciplines, often
without being aware of similar efforts undertaken elsewhere.'' In 1965 Anderson's acceleration was designed to accelerate Picard iteration for electronic structure computations. Because it is relevant in our current work, we give a brief description of it for completeness.
 
 For a given sequence of iterate $\{x_k\}$ with $x_k\in\R^n$ and a mapping $\Phi(\cdot): \R^n \rightarrow \R^n$, the fixed-point algorithm generates a recursive update of the iterates as:
 \begin{eqnarray}\label{FP}
 x_{k+1}=\Phi (x_k).
 \end{eqnarray}
Let there be $m_{k}+1$ evaluations of the fixed point map $\phi$. Anderson's acceleration  technique computes a new iteration as a linear combination of the previous $m_{k}+1$ evaluations. We explain it formally in Alg~\ref{alg_1}. In Alg~\ref{alg_1}, $m$ is considered as a hyperparameter that sets the quantity $m_k$ as $\min\{m,k\}$, where $k$ is the iteration counter and $m$ is known as the depth.~This is used to determine the window size to compute $\hat{c}$--the coefficients for linear combination of the fixed point evaluations. In other words,  in each iteration, by solving the optimization problem:
 $$\boxed{\hat{c}^{(k)}=\arg\min_{c}\|F^k c\|\quad {\rm subject\;\;to}\quad\sum_ic_i=1,}$$
one can obtain the extrapolation coefficients  $\hat{c}^{(k)}$ that help to determine the accelerated point $x_{k+1}.$ Toth and Kelley pointed out that, in principle, any norm can be used in the minimization step \citep{tothkelley}. 
 \begin{algorithm}
	\SetAlgoLined
 	\SetKwInOut{Input}{Input}
	\SetKwInOut{Output}{Output}
     \SetKwInOut{Init}{Initialize}
     \SetKwInOut{Compute}{Compute}
\nl\Input{$x_0\in\R^n$ and $m\geq 1$\;}
     \nl\Init {Set $x_1=\Phi(x_0), m_k=\min\{m,k\}, F^k=(f_{k-m_k},f_{k-m_k+1},\cdots, f_k)\in\R^{n\times (m_k+1)}$, where $f_i=\Phi(x_i)-x_i$\;}
	 \nl \For{$k=1,2,\cdots$}
	{
		\nl Find $\hat{c}^{(k)}\in\R^{(m_k+1)}$ such that: $\hat{c}^{(k)}=\arg\min_{\alpha}\|F^kc\|$ subject to $\sum_ic_i=1$\;
		\nl Set $x_{k+1}=\sum_{i=0}^{m_k}\hat{c}_i^{(k)}\Phi(x_{k-m_k+i}).$
 	}
	
 	\caption{Anderson Acceleration} \label{alg_1}
 \end{algorithm}
The summability of the coefficients $c_i$ or the {\em normalization condition} was not explicitly mentioned in the original work of Anderson. Because $c_i$'s can be determined up to a multiplicative scalar, one can impose the {\em normalization condition}. However, it does not restrict generality.  
We refer the readers to~\citep{kelley,tothkelley,walker,Anderson,brezinski} for a comprehensive idea of Anderson's acceleration technique.

 \section{Acceleration Schema}\label{sec:schema}
In this section, we explain different acceleration schema used by Scieur et al. in \citep{rna_16,rna_18} for completeness. 
\begin{remark}
For GD, the updates of the iterates are done via the simple update rule \eqref{GD}, which is explained in Fig~\ref{fig:accln}(a). 
\end{remark}

\subsubsection{Online Scheme 1}
We explain the acceleration scheme proposed in \citep{rna_16} herein. First, we run $k$ iterations of GD to produce the sequence of iterates $\{x_i\}_{i=1}^k$ and then use extrapolation to generate a new point $x_k'$.We use $x_k'$ as the initial point of GD and produce a set of next $k$ iterates via GD. At this end, we further use the extrapolation scheme to produce a second offline update $x_{k+1}'$ which is used as next the initial point of GD, and this process continues. See Fig~\ref{fig:accln}(b). 

\subsubsection{Online Scheme 2}
The acceleration scheme proposed in \citep{rna_18}, is more involved than the one propsed in \citep{rna_16}. First, we run $k$ iterations of GD to produce a sequence of iterates $\{x_i\}_{i=1}^k$ and then use extrapolation to generate $x_k'$. Next, we use $x_k'$ as the starting point of GD to produce $x_{k+1}$. Now, we start from the second iterate $x_2$ and consider a set of $k$ iterates $\{x_2,x_3,\cdots, x_k,x_{k+1}\}$ to produce the second offline update $x_{k+1}'$ via extrapolation which is to be used as the next starting point of GD, and this process continues. See Fig~\ref{fig:accln}(c). 

\subsubsection{Offline scheme}
Lastly, we describe an offline update scheme, as illustrated in Fig~\ref{fig:accln}(d). First, we run the GD to produce the sequence of iterates $\{x_k\}$ and then use the acceleration on the set of first $k$ iterates to produce the first offline update $x_k'$ and concatenate it with the previous $(k-1)$ GD updates. Next, we start from the second iterate $x_2$ and consider a set of $k$ iterates to produce the second offline update $x_{k+1}'$ via acceleration and this process continues. As a result, the offline accelerated updates are generated as $\{x_1, x_2\cdots, x_k',x_{k+1}',\cdots\}$.

 \begin{figure*}
  \includegraphics[width=\textwidth]{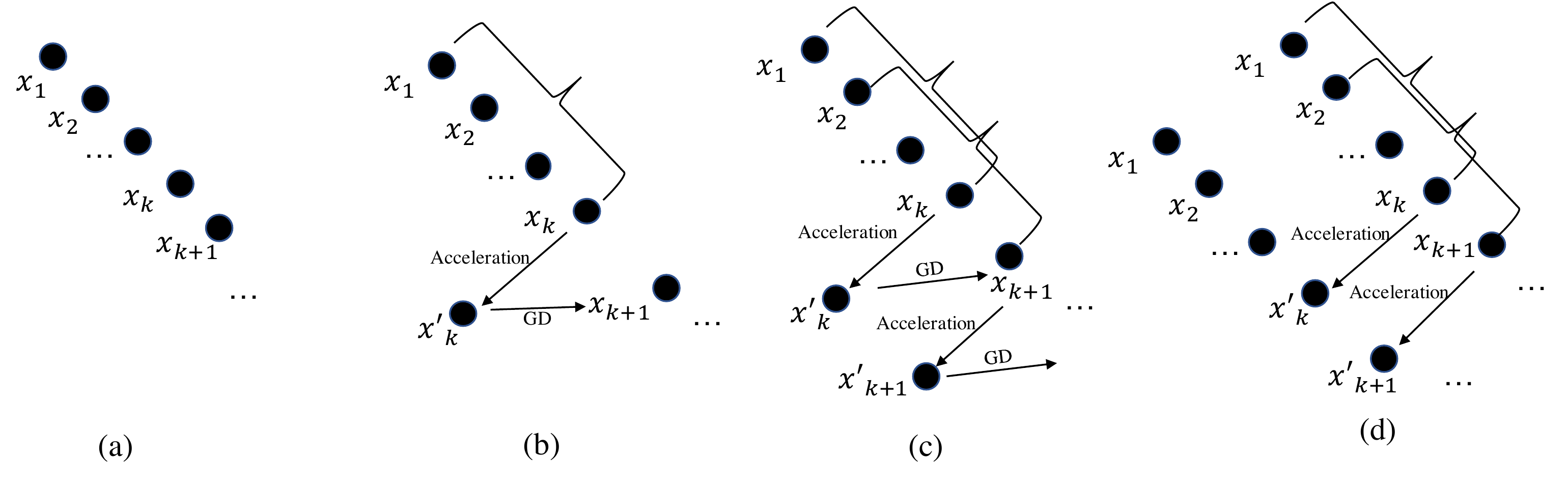} 
   \caption{Updates via: (a) gradient descent, (b) online extrapolation on gradient descent \citep{rna_16}, (c) online extrapolation on gradient descent \citep{rna_18}, and (d) offline scheme.} 
   \label{fig:accln}
 \end{figure*}

\begin{proof}{\it of Lemma \ref{lemma:dna1}.}
Let $h(c) = f(Xc)$, 
from the first order optimality condition we have 
  \begin{eqnarray*}
  \nabla h(c) &=& X^\top \nabla f\left(Xc \right) = 0
  \end{eqnarray*}
  For quadratic objective function the gradient is affine, i.e 
 \begin{eqnarray*}
 \nabla f(Xc) &=& AXc + \nabla f(0)\\
 &=&  \sum_{k = 0}^K c_k Ax_k + \nabla f(0)\\
&=& \sum_{k = 0}^K c_k \left( \nabla f(x_k) -  \nabla f(0)\right) + \nabla f(0).  
 \end{eqnarray*} 
   By using the relation between the iterates of GD method we find $\nabla f\left(x_k \right) =  \frac{x_k - x_{k+1}}{\alpha_k}.$ Hence $\nabla f(Xc) =Rc + \nabla f(0).$
   By injecting this in the first order optimality condition we get the result.
\end{proof}  
\begin{proof}{\it of Lemma \ref{lemma:krna}.}

Since $f$ is quadratic then $f(x) = f(x^\star) + \|x-x^\star\|_H^2$. Therefore, from the definition of $c_R$ and $c_D$ and using Proposition 2.2 in \citep{rna_16} we conclude the result. 
\end{proof} 
 \begin{proof}{\it of Lemma \ref{lemma:ckrna_lemma}.}

  The Lagrangian of the problem \eqref{eq:rna} is 
  $$L(c,\lambda) = h(c) + \lambda \left(\sum_{k = 0}^K c_k -1\right),$$
  where $\lambda>0$ is the Lagrange multiplier.
  The first order optimality conditions are 
  \begin{eqnarray}
  \nabla L_x(x,\lambda) &=& X^\top \nabla f\left(Xc \right) + \lambda \mathbbm{1} = 0\\
  \nabla L_\lambda(x,\lambda) &=& c^\top \mathbbm{1} - 1= 0.
  \end{eqnarray}
  For quadratic objective functions, the gradient is affine and because $\sum_{k = 0}^K c_k=1$ we have  
\begin{equation} \label{eq:lingrad}
  \nabla f\left(Xc \right) =  \sum_{k = 0}^K c_k \nabla f\left(x_k \right).
  \end{equation} 
 By using the relation between the iterates of GD method we find $$\nabla f\left(x_k \right) =  \frac{x_k - x_{k+1}}{\alpha_k}.$$
 By using the above expression in equation (\ref{eq:lingrad}) we further get 
 \begin{equation} \label{eq:lingrad-}
  \nabla f\left(Xc \right) =  \sum_{k = 0}^K c_k \frac{x_k - x_{k+1}}{\alpha_k} = \tilde{R}c.
  \end{equation} 
Substituting (\ref{eq:lingrad-}) in the first optimality condition and solving for $c$ we get 
  $$
   c   = - \lambda  \left(X^\top \tilde{R} \right)^{-1}\mathbbm{1}.$$
Next we use it in the second optimality condition and solve it for $\lambda$ to find
   $$ 
  \lambda =  \frac{-1}{\mathbbm{1}^\top   \left(X^\top\tilde{R} \right)^{-1}\mathbbm{1}},$$
 and therefore the final expression for $c$ is 
 $$
  c = \frac{ \left(X^\top \tilde{R} \right)^{-1}\mathbbm{1}}{\mathbbm{1}^\top   \left(X^\top \tilde{R} \right)^{-1}\mathbbm{1}}.
  $$
 \end{proof}
 
 \section{Example with Quadratic function.}
Let $f(x)=\frac{1}{2}x^\top Ax$, where $A$ is symmetric and positive definite. We know $\nabla f(x)=Ax.$ By using the extrapolation we find the coefficients $c_i$s such that $x=\sum_ic_ix_i=Xc$, where $X=[x_0\;\;x_1\;\cdots x_k]$ is a matrix generated by stacking $k$ iterates as its column and $c\in\R^k$ is a vector of coefficients. 
We know 
$$
f(Xc)=\frac{1}{2}c^\top X^\top AXc, \qquad\text{ for DNA} \qquad c_{\rm D}= 0, \qquad\text{and for DNA-1} \qquad c_{\rm D1}=\frac{z}{\mathbbm{1}^\top z}\text{ where } z=(X^\top \tilde{R})^{-1}\mathbbm{1}.
$$
Therefore, we find
$$
f_D = 0, ~ {\text{ and }} 
f_{\rm D1}=\frac{\mathbbm{1}^\top (\tilde{R}^\top X)^{-1}X^\top AX (X^\top \tilde{R})^{-1}\mathbbm{1}}{2(\mathbbm{1}^\top z)^2},
$$
which for $\tilde{R}= [\nabla f(x_0)\;\;\nabla f(x_1)\;\cdots \nabla f(x_k)]=[Ax_0\;\;Ax_1\;\cdots Ax_k]= AX$ further reduces to 
$$
f_{\rm D1}=\frac{1}{2 \mathbbm{1}^\top (X^\top A X)^{-1} \mathbbm{1}}. 
$$
Similarly, we find for RNA
$$
c_{\rm R}=\frac{(\tilde{R}^\top \tilde{R})^{-1}\mathbbm{1}}{\mathbbm{1}^\top (\tilde{R}^\top \tilde{R})^{-1} \mathbbm{1}}.
$$
Therefore,  
$$
f_{\rm R}=\frac{\mathbbm{1}^\top (\tilde{R}^\top \tilde{R})^{-1}X^\top A X (\tilde{R}^\top \tilde{R})^{-1}\mathbbm{1}}{2(\mathbbm{1}^\top (X^\top A^2X)^{-1} \mathbbm{1})^2},
$$
which further reduces to  
$$
f_{\rm RNA}=\frac{\mathbbm{1}^\top (X^\top A^2 X)^{-1}X^\top A X (X^\top A^2 X)^{-1}\mathbbm{1}}{2(\mathbbm{1}^\top (X^\top A^2X)^{-1} \mathbbm{1})^2}.
$$
In order to prove Lemma \ref{lemma:ratio} we need the following  Lemma. 
\begin{lemma}\label{pinv}
If the sequence of iterates $\{x_k\}$ are linearly independent then we have:\\
(i) the matrices $AX$ and $A^{\frac{1}{2}} X$ have full column ranks.\\
(ii) $(A^{1/2}X)^\dagger A^{-1/2}((AX)^\dagger)^\top=(X^\top A^2 X)^{-1}.$
\end{lemma}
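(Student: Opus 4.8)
The plan is to lean on two elementary facts: (1) $A$ is symmetric positive definite, so $A^{1/2}$, $A^{-1/2}$ and $A$ are all invertible and symmetric; and (2) for any matrix $M$ of full column rank, the Moore--Penrose pseudoinverse has the closed form $M^\dagger=(M^\top M)^{-1}M^\top$. Given these, the lemma reduces to bookkeeping with symmetric factors and transposes.

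For part (i): by hypothesis $x_0,\dots,x_k$ are linearly independent, so $\rank(X)=k+1$. Left multiplication by the invertible matrix $A^{1/2}$ preserves rank, hence $\rank(A^{1/2}X)=k+1$; applying this once more to $AX=A^{1/2}(A^{1/2}X)$ gives $\rank(AX)=k+1$. Thus both $A^{1/2}X$ and $AX$ have full column rank, which is precisely the hypothesis needed to invoke the formula in (2) for each of them.

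For part (ii): applying (2) with $M=A^{1/2}X$ and using $A^\top=A$ yields $(A^{1/2}X)^\dagger=(X^\top A X)^{-1}X^\top A^{1/2}$; applying it with $M=AX$ yields $(AX)^\dagger=(X^\top A^2 X)^{-1}X^\top A$, and transposing (using that $X^\top A^2 X$ is symmetric, so its inverse is too) gives $((AX)^\dagger)^\top=A X(X^\top A^2 X)^{-1}$. Substituting these into the left-hand side and collapsing $A^{1/2}A^{-1/2}=I$ in the middle leaves
$$(A^{1/2}X)^\dagger A^{-1/2}\big((AX)^\dagger\big)^\top=(X^\top A X)^{-1}\big(X^\top A X\big)(X^\top A^2 X)^{-1}=(X^\top A^2 X)^{-1},$$
which is the asserted identity.

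The computation is entirely routine; the only spots worth a moment of care are verifying that the powers of $A$ in the interior of the product cancel to exactly $X^\top A X$, and that transposing $(AX)^\dagger$ produces no stray transpose on $A$ --- both handled by symmetry of $A$. I do not foresee any genuine obstacle and would keep the written proof correspondingly short, perhaps also remarking that this identity is exactly what is invoked to rewrite $y^\top A^{-1/2}z=z^\top z$ in the proof of Lemma~\ref{lemma:ratio}.
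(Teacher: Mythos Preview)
Your proof is correct and follows essentially the same approach as the paper: both use that $A$ (and hence $A^{1/2}$) is invertible to establish full column rank in part~(i), and then in part~(ii) substitute the closed form $M^\dagger=(M^\top M)^{-1}M^\top$ for $M=A^{1/2}X$ and $M=AX$, collapse $A^{1/2}A^{-1/2}=I$, and cancel $(X^\top AX)^{-1}(X^\top AX)$ to reach $(X^\top A^2X)^{-1}$.
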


\begin{proof}
(i) Since $A$ is symmetric and positive definite, ${\rm rank}(A)= {\rm rank}(A^{1/2})=n$. As the iterates $\{x_k\}$ are linearly independent, $X = [x_0, \ldots,x_K]\in\R^{n\times (K+1)}$ has full column rank. Therefore, the matrices $AX$ and $A^{\frac{1}{2}} X$ have full column ranks. 

(ii) We know if a matrix $B$ is of full column rank then $B^\dagger = (B^\top B)^{-1}B^\top.$ By using the above and (i) and we find
\begin{eqnarray*}
(A^{1/2}X)^\dagger A^{-1/2}((AX)^\dagger)^\top&=&(X^\top A^{1/2}A^{1/2}X)^{-1}X^\top A^{1/2}A^{-1/2}((X^\top AAX)^{-1}X^\top A)^\top\\
&=&(X^\top AX)^{-1}X^\top((X^\top A^2X)^{-1}X^\top A)^\top\\
&\overset{(X^\top A^2X)^\top=X^\top A^2X}=&(X^\top AX)^{-1}(X^\top AX)(X^\top A^2X)^{-1}\\
&=&(X^\top A^2 X)^{-1}.
\end{eqnarray*} 
Hence the result. 
\end{proof}
\begin{figure*}
\centering
	\begin{subfigure}{0.4955\textwidth}
		\centering
		\includegraphics[width=\textwidth]{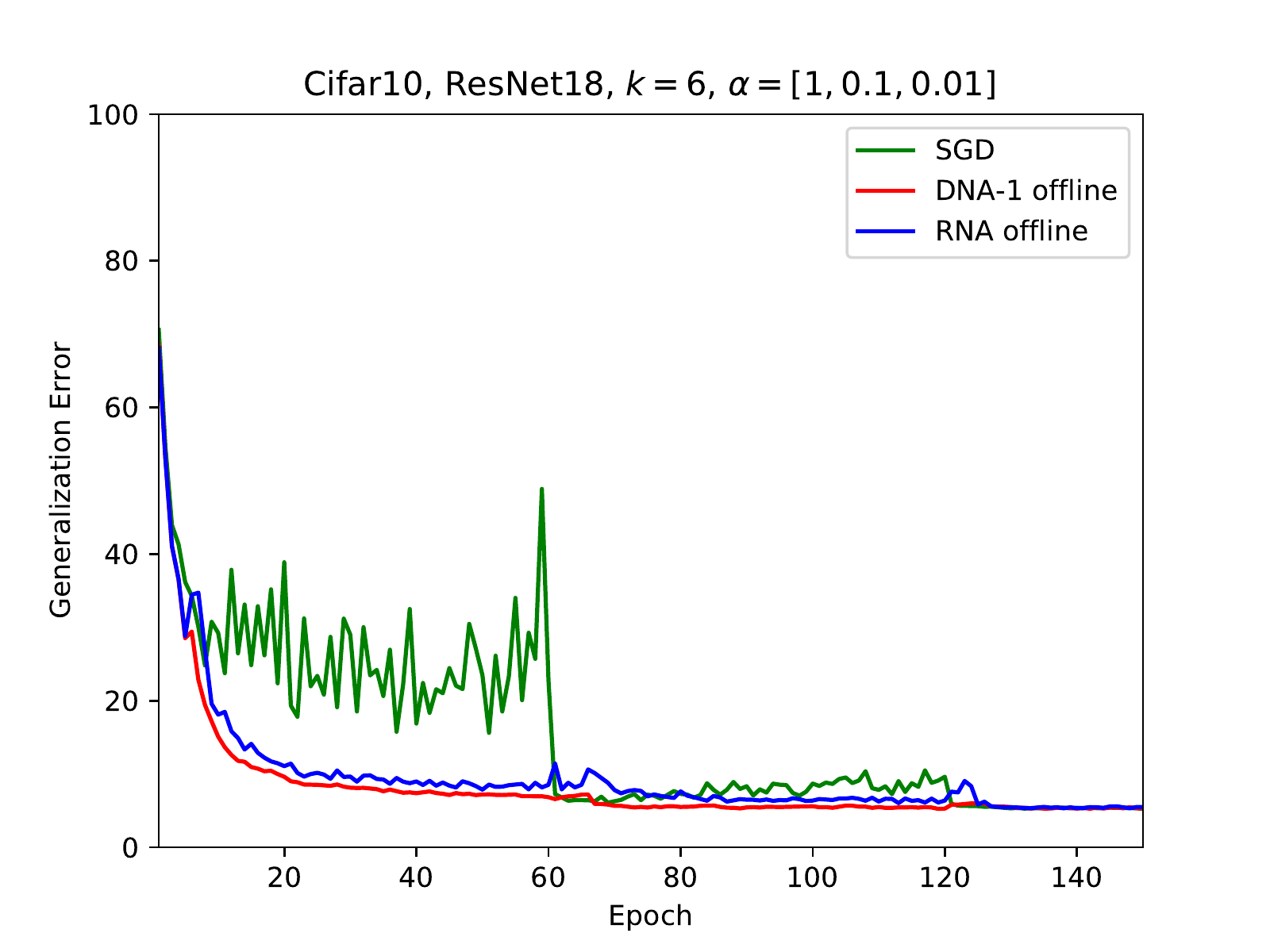}
				\caption{}
\end{subfigure}
\begin{subfigure}{0.4955\textwidth}
		\centering
		\includegraphics[width=\textwidth]{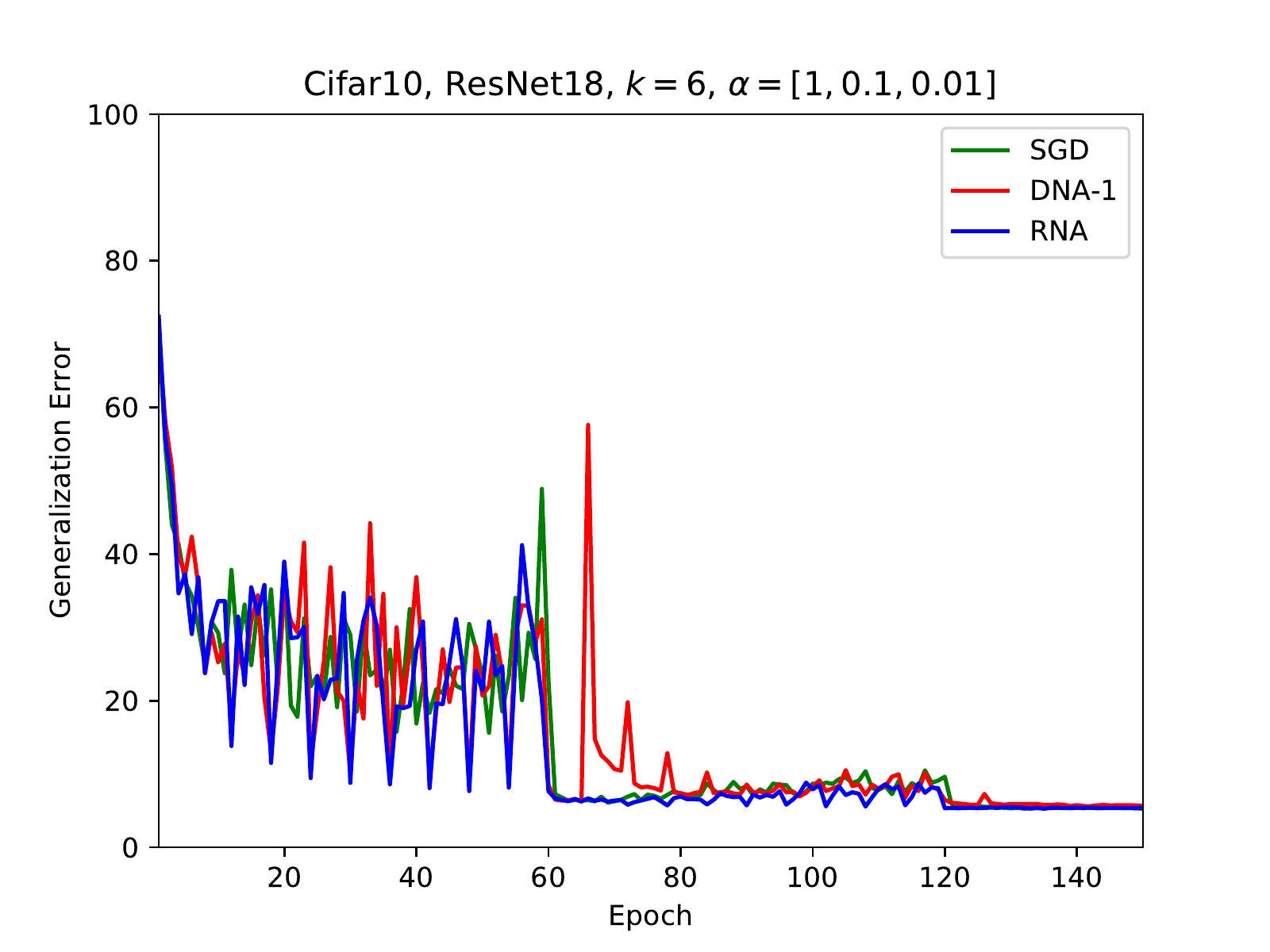}
				\caption{}
\end{subfigure}
\caption{\small{Acceleration on Neural Network. (a)  Experiment implementing ResNet18 on Cifar10 dataset with SGD as training algorithm with decaying stepsize across the epochs. For both DNA-1 and RNA, the window size is set to $k=6$ and we use the offline scheme of \citep{rna_16}. (b) Experiment implementing ResNet18 on Cifar10 dataset with SGD as training algorithm with decaying stepsize across the epochs. For both DNA-1 and RNA, the window size is set to $k=6$ and we use the online scheme of \citep{rna_16}. In both cases, RNA and DNA-1 fail to accelerate the SGD iterates. This indicates the fact that the stepsize is a very important hyperparameter and one needs to further explore it in case of accelerating a neural network training. 
}}\label{fig:appendix_neu_net}
\end{figure*}

\begin{proof}{\it of Lemma \ref{lemma:UB}.}
We have $\tilde{R}= AX$. Since $A$ is symmetric and positive definite, it is invertible and $X=A^{-1}R.$ Set $y=((A^{1/2}X)^\dagger)^\top\mathbbm{1}$. 
 and let $\tilde{R}^\dagger$ be the pseudo-inverse of $R$. Therefore, $\tilde{R}^\dagger$ can be computed as
$$
\tilde{R}^\dagger = (\tilde{R}^\top \tilde{R})^{-1}\tilde{R}^\top,
$$
and $(\tilde{R}^\dagger)^\top$ is 
$$
(\tilde{R}^\dagger)^\top = \tilde{R}(\tilde{R}^\top \tilde{R})^{-1}.
$$
We also note that $\tilde{R}^\dagger \tilde{R} = I_k$ and $\tilde{R}^\top(\tilde{R}^\dagger)^\top = I_k$, where $I_k$ is an identity matrix of size $k$. Therefore, we have 
\begin{eqnarray*}
2f_{\rm D1} &=& \frac{1}{\mathbbm{1}^\top (X^\top A X)^{-1}\mathbbm{1}}\\
&=& \frac{1}{\mathbbm{1}^\top (X^\top A^{1/2} A^{1/2} X)^\dagger\mathbbm{1}},
\end{eqnarray*}
Since, $(X^\top A^{1/2})^\top = A^{1/2}X$, by using the property of pseudo-inverse, we can write $$(X^\top AX)^{-1}=(X^\top A^{1/2}A^{1/2} X)^\dagger  = (A^{1/2}X)^\dagger((A^{1/2}X)^\dagger)^\top$$ and the above expression becomes 
\begin{eqnarray}\label{f_krna}
2f_{\rm D1} &=& \frac{1}{\mathbbm{1}^\top(A^{1/2}X)^\dagger((A^{1/2}X)^\dagger)^\top\mathbbm{1}}\overset{y=((A^{1/2}X)^\dagger)^\top\mathbbm{1}}=\frac{1}{y^\top y}=\frac{1}{\|y\|_2^2}.
\end{eqnarray}
Similarly we find, $(X^\top A)^\top = AX$, and again by using the property of pseudo-inverse, we can write $$(X^\top A^2X)^{-1}=(X^\top AAX)^\dagger  = (AX)^\dagger((AX)^\dagger)^\top$$ and 
\begin{eqnarray*}
2f_{\rm R} &=&\frac{\mathbbm{1}^\top (AX)^\dagger((AX)^\dagger)^\top X^\top A X (AX)^\dagger((AX)^\dagger)^\top\mathbbm{1}}{(\mathbbm{1}^\top (AX)^\dagger((AX)^\dagger)^\top \mathbbm{1})^2}\\
&\overset{AX=\tilde{R}}=&\frac{\mathbbm{1}^\top \tilde{R}^\dagger(\tilde{R}^\dagger)^\top(\tilde{R}^\dagger)^\top A^{-1} \tilde{R} \tilde{R}^\dagger(\tilde{R}^\dagger)^{\top}\mathbbm{1}}{(\mathbbm{1}^\top \tilde{R}^\dagger (\tilde{R}^\dagger)^{\top} \mathbbm{1})^2}\\
&\overset{\tilde{R}^\dagger = \tilde{R}^\dagger(\tilde{R}^\dagger)^\top(\tilde{R}^\dagger)^\top, (\tilde{R}^\dagger)^\top = \tilde{R} \tilde{R}^\dagger(\tilde{R}^\dagger)^{\top}}=&\frac{\mathbbm{1}^\top \tilde{R}^\dagger A^{-1} (\tilde{R}^\dagger)^\top\mathbbm{1}}{(\mathbbm{1}^\top  (\tilde{R}^\top \tilde{R})^{-1} \mathbbm{1})^2}\\
&\overset{z\eqdef \tilde{R}^\dagger\mathbbm{1}}{=}&\frac{z^\top A^{-1} z} {(z^\top z)^2}.
\end{eqnarray*}
Therefore, \begin{eqnarray}\label{f_rna}
2f_{\rm R} =\frac{z^\top A^{-1} z}{(z^\top z)^2}=\frac{\|z\|_{A^{-1}}^2}{\|z\|_2^4}.
\end{eqnarray}
Combining \eqref{f_krna} and \eqref{f_rna} we obtain the ratio between $f_R$ and $f_D$. 

From lemma~\ref{pinv} we have $y^\top A^{-1/2} z = z^\top z$ then by uisng Cauchy Swartz inequality we conclude that ${\|z\|_{A^{-1}}\|y\|_2} \ge y^\top A^{-1/2} z= {\|z\|_2^2}$
whence $\frac{f_{\rm R}}{f_{\rm D1}} \ge 1$.
\end{proof}
\begin{proof}{\it of Theorem \ref{lemma:UB}.}
Recall that $\frac{f_{{\rm R}}}{f_{{\rm D1}}}=\frac{\|z\|_{A^{-1}}^2\|y\|_2^2}{\|z\|_2^4}.$ Also recall that 
$z\eqdef (\tilde{R}^\dagger)^\top \mathbbm{1} = ((AX)^\dagger)^\top \mathbbm{1}$ and $y\eqdef((A^{1/2}X)^\dagger)^\top\mathbbm{1}$. 
Therefore, $A^{1/2}z$  is the minimum norm solution to the linear system: $X^\top A^{1/2}A^{1/2}z=\mathbbm{1}$
and similarly, $y$  is the minimum norm solution to the linear system: $X^\top A^{1/2}y=\mathbbm{1}$. By using the above fact, we find $\|y\|_2\leq \|A^{1/2}z\|_2=\|z\|_A$ and  we can rewrite the ratio as:
\begin{eqnarray}\label{ratio_ub}
\frac{f_{{\rm R}}}{f_{{\rm D1}}}=\frac{\|z\|_{A^{-1}}^2\|y\|_2^2}{\|z\|_2^4}\overset{y=A^{1/2}z}=\frac{\|z\|_{A^{-1}}^2\|z\|_A^2}{\|z\|_2^4}.
\end{eqnarray}
From \eqref{ratio_ub} the quantity $\displaystyle{\max_{z\neq 0} \frac{\|z\|_{A^{-1}}^2\|z\|_A^2}{\|z\|_2^4}}$ is equivalent to $\displaystyle{\max_{\|z\|_2=1} {\|z\|_{A^{-1}}^2\|z\|_A^2}}\leq \displaystyle{\max_{\|z\|_2=1}\|z\|_{A^{-1}}^2\max_{\|z\|_2=1} \|z\|_A^2} \eqdef U_R.$

Note that $U_R \leq {\lambda_{\max}(A^{-1})}\lambda_{\max}(A)= \frac{\lambda_{\max}(A)}{\lambda_{\min}(A)}=\kappa(A).$

Let $U \Sigma U^\top$ be an eigenvalue decomposition of $A$ then 
$U_R = \displaystyle{\max_{\|z\|_2=1}\|z\|_{\Sigma^{-1}}^2\max_{\|z\|_2=1} \|z\|_\Sigma^2}$. By considering a vector with $1/\sqrt{2}$ at the first and last position and zero everywhere else we conclude that 
\begin{eqnarray*}
U_R &\ge& \left( \frac{1}{2 \lambda_{\max}(A)} + \frac{1}{2 \lambda_{\min}(A)}  \right)\left(  \frac{1}{2 \lambda_{\min}(A)} + \frac{1}{2 \lambda_{\max}(A)}   \right)\\
& \ge & \frac{2+\kappa(A)}{4}.
\end{eqnarray*}
\end{proof}

\section{Reproducible research}
See the {\tt LIBSVM} dataset from the repository online: \url{https://www.csie.ntu.edu.tw/~cjlin/libsvmtools/datasets/}.
See the source code of RNA from: \url{https://github.com/windows7lover/RegularizedNonlinearAcceleration}. For {\tt MATLAB} and {\tt Pytorch} code that is used to produce all the results for our DNA, please email the authors. 

\end{document}